\documentclass[12pt]{amsart}



\usepackage[numbers]{natbib}
\usepackage[utf8]{inputenc} 
\usepackage[T1]{fontenc} 
\usepackage[dvipsnames, table]{xcolor} 
\usepackage{color} 
\usepackage{tikz} 
\usetikzlibrary{backgrounds, positioning}
\usepackage{tikz-cd} 
\usetikzlibrary{decorations.pathmorphing}
\usepackage{amssymb, amsmath, mathtools, amsfonts, amsthm} 
\usepackage{graphicx} 
\graphicspath{ {./images/} }
\usepackage{caption} 
\usepackage{latexsym} 
\usepackage[inline]{enumitem} 
\usepackage{hyperref} 
\hypersetup{colorlinks=true, urlcolor=caribbeangreen, citecolor=gray, linkcolor=caribbeangreen}
\usepackage{url} 
\urlstyle{tt}
\usepackage{indentfirst} 
\usepackage{listings} 
\usepackage[mathscr]{eucal} 
\usepackage{manfnt} 
\usepackage{longtable} 
\usepackage{colonequals} 
\usepackage{nicefrac} 
\usepackage{caption} 
\usepackage{subcaption} 
\usepackage{float}
\usepackage[capitalize]{cleveref} 
\usepackage{stackengine} 
\usepackage{breakcites}

\usepackage{stmaryrd} 
\usepackage{bm}       


\newcounter{counter}[section] 
\counterwithin{counter}{section}
\newtheorem{theorem}[counter]{Theorem}
\newtheorem{lemma}[counter]{Lemma}

\newtheorem{proposition}[counter]{Proposition}

\theoremstyle{definition} 
\newtheorem{definition}[counter]{Definition}
\newtheorem{situation}[counter]{Situation}

\newtheorem{example}[counter]{Example} 
\newtheorem{remark}[counter]{Remark}

\newtheorem{warning}[counter]{\textdbend Warning}



\definecolor{santicolor}{RGB}{0, 159, 183} 
\definecolor{headercolor}{rgb}{0.83, 0.83, 0.83} 
\definecolor{caribbeangreen}{rgb}{0.0, 0.8, 0.6}
\definecolor{acolor}{rgb}{0.01, 0.75, 0.24}
\definecolor{bcolor}{RGB}{0, 159, 183}
\definecolor{ccolor}{RGB}{254, 74, 73}
\definecolor{linkcolor}{rgb}{0.0, 0.0, 0.61}
\definecolor{codegreen}{rgb}{0,0.6,0}
\definecolor{codegray}{rgb}{0.5,0.5,0.5}
\definecolor{codepurple}{rgb}{0.58,0,0.82}
\definecolor{backcolour}{rgb}{0.95,0.95,0.92}
\lstdefinestyle{mystyle}{backgroundcolor=\color{backcolour}, commentstyle=\color
 {codegreen}, keywordstyle=\color{magenta}, numberstyle=\tiny\color{codegray}, stringstyle=\color{codepurple}, basicstyle=\ttfamily\footnotesize, breakatwhitespace=false, breaklines=true, captionpos=b, keepspaces=true, numbers=left, numbersep=4pt, showspaces=false, showstringspaces=false, showtabs=false, tabsize=2}
\lstset{style=mystyle, language=[Objective]C}


\newcommand{\Z}{\mathbb{Z}} 
\newcommand{\Q}{\mathbb{Q}} 

\newcommand{\C}{\mathbb{C}} 
\newcommand{\Qbar}{{\bar{\Q}}} 
\newcommand{\kbar}{{\bar{k}}} 
\newcommand{\OO}{\mathscr{O}} 
\newcommand{\mfp}{\mathfrak{p}} 
\newcommand{\rmp}{\mathrm{p}} 

\newcommand{\A}{\mathbb{A}} 
\newcommand{\Gm}{\mathbb{G}_\text{m}}
\newcommand{\kk}{{\mathbf k}} 
\renewcommand{\P}{\mathbb{P}} 
\newcommand{\Pone}{\P^1} 
\newcommand{\PP}{\mathscr{P}} 
\newcommand{\XX}{\mathscr{X}} 
\newcommand{\mcS}{\mathcal{S}} 
\newcommand{\mcR}{\mathcal{R}} 
\newcommand{\mcU}{\mathcal{U}} 
\newcommand{\mcV}{\mathcal{V}} 
\newcommand{\mcL}{\mathcal{L}} 

\newcommand{\DD}{\mathbf{D}}
\newcommand{\belyi}{Belyi } 
\newcommand\HH{\mathrm{H}}
\newcommand\cHH{\check{\mathrm{H}}}
\newcommand{\fppf}{\texttt{fppf}} 
\newcommand{\ab}{\texttt{ab}} 
\newcommand{\tribar}{\bar\triangle} 
\newcommand{\x}{\mathsf{x}}
\newcommand{\y}{\mathsf{y}}
\newcommand{\z}{\mathsf{z}}
\renewcommand{\t}{\mathsf{t}}
\newcommand{\s}{\mathsf{s}}

\newcommand{\sfH}{\mathsf{H}} 
\newcommand{\sfS}{\mathsf{S}} 
\newcommand{\sfK}{\mathsf{K}} 
\newcommand{\abc}{(a,b,c)}

\newcommand{\md}{\operatorname{mod}} 

\newcommand{\ideal}[1]{\langle #1 \rangle} 
\newcommand{\cdef}[1]{{\color{linkcolor}\textsf{#1}}} 
\newcommand{\brk}[1]{\left\{ #1 \right\}} 
\newcommand{\inv}{^{-1}}
\newcommand{\conprod}[1]{\stackon{$\times$}{\tiny $#1$}} 
\renewcommand{\th}{\text{th}}
\newcommand{\Sch}{\texttt{Sch}} 
\newcommand{\GrpSch}{\texttt{GrpSch}} 

\newcommand{\bfj}{\mathbf{j}}
\newcommand{\bfw}{\mathbf{w}}

\newcommand{\gfe}{A\x^{a} + B\y^{b} + C\z^{c}}

\newcommand{\Autsch}{\mathbf{Aut}}


\newcommand{\spcite}[1]{\cite[\href{https://stacks.math.columbia.edu/tag/#1}{Tag
    #1}]{stacks-project}}


\newcommand{\eclabel}[1]{\href{https://www.lmfdb.org/EllipticCurve/Q/#1}{\texttt{#1}}}


\DeclareMathOperator{\Spec}{Spec} 
\DeclareMathOperator{\Proj}{Proj} 
\DeclareMathOperator{\Gal}{Gal} 
\DeclareMathOperator{\Stab}{Stab} 
\DeclareMathOperator{\Div}{Div}

\DeclareMathOperator{\Aut}{Aut} 
\DeclareMathOperator{\Hom}{Hom} 

\DeclareMathOperator{\Pic}{Pic}

\DeclareMathOperator{\id}{id}
\DeclareMathOperator{\lcm}{lcm}




\usepackage[textsize=tiny]{todonotes}
\usepackage{xargs, xpatch}
\setlength{\marginparwidth}{2cm}

\usepackage{marginnote}


\title{Fermat descent}
\author{Santiago Arango-Pi{\~n}eros}

\begin{document}

\begin{abstract}
  Descent theory (a modern formulation of Fermat's classical method of \emph{infinite
    descent}) is a powerful tool in arithmetic geometry. In this
  article, we reinterpret descent theory through the lens of quotient stacks
  and apply it in the setting where it first arose: the Diophantine study of
  generalized Fermat equations
  \begin{equation}
      \label{eq:abstract-gfe}
      \gfe = 0.
  \end{equation}
  We focus on understanding the arithmetic of the stacks that arise from the
  study of primitive integral solutions to \Cref{eq:abstract-gfe}, rather than
  on solving any particular instance of the equation.
\end{abstract}

\maketitle
\vspace{-8mm}
\setcounter{tocdepth}{1}
\tableofcontents
\vspace{-11mm}


\section{Introduction}
\label{sec:introduction}

\subsection{The framework of Poonen--Schaefer--Stoll}
\label{sec:PSS}

Poonen, Schaefer, and Stoll \cite[Theorem 1.1]{PoonenSchaeferStoll07} provably
computed the finite set of primitive integral solutions of the generalized
Fermat equation
\begin{equation}
  \label{eq:pss}
  F\colon \x^2 + \y^3 + \z^7 = 0.
\end{equation}
(Recall that $(x,y,z) \in \Z^3$ is called \cdef{primitive} when
$\gcd(x,y,z) = 1$.) If $\mcU$ is the \cdef{punctured cone} associated to
$F \subset \A^3_\Z$ (i.e., the subscheme obtained by deleting $\brk{\x = \y =
  \z = 0}$ to $F$), then $\mcU(\Z)$ is identified with the set of primitive
integral solutions to \Cref{eq:pss}. A preliminary step in their method is to
consider the quotient stack $[\mcU/\Gm]$, where the multiplicative group $\Gm$
acts by
\[(x,y,z)\cdot \lambda \colonequals (\lambda^{21}x,\lambda^{14}y,
  \lambda^{6}z).\] After inverting the bad primes $\mcS = \brk{2,3,7}$, the
stack $[\mcU/\Gm]$ becomes isomorphic to the stack $\Pone(2,3,7)$; this is the
projective line $\Pone_\Z$ rooted at the irreducible horizontal divisors
$0, 1$, and $\infty$ with multiplicities $2,3,$ and $7$, respectively.
\begin{equation}
  \label{eq:pss-iso}
  [\mcU/\Gm]_{\Z[1/42]} \cong \Pone(2,3,7)_{\Z[1/42]}.
\end{equation}
After this preliminary result, the first step in the method is to find a
geometrically Galois Belyi map $\phi\colon X \to \Pone$, with ramification
indices $2,3,7$ above $0,1,\infty$. To find $\mcU(\Z)$, it is enough to
calculate the sets of rational points on the curves $X_\tau$ for an explicit
finite set of twists $\phi_\tau \colon X_\tau \to \Pone$ of the map
$\phi\colon X \to \Pone$. Our interpretation of the method of \cite[Section
3]{PoonenSchaeferStoll07} tersely summarized here is henceforth referred to as
\cdef{Fermat descent}.

\subsection{Our main theorem}
\label{sec:intro-main-theorem}
To generalize the method of Fermat descent to arbitrary generalized Fermat
equations
\begin{equation}
  \label{eq:gfe}
  F\colon A\x^a + B\y^b + C\z^d = 0, 
\end{equation}
one must start by finding the correct analog of the isomorphism~(\ref{eq:pss-iso}). This is
our main contribution.

Let $\mcS$ be set of primes dividing the integer
$a\cdot b\cdot c\cdot A\cdot B\cdot C \neq 0$, and denote by $R$ the ring of
$\mcS$-integers. Let $\mcU$ be the punctured cone associated to \Cref{eq:gfe}.
Let $\sfH$ be the subgroup of $\Gm^3$ given on points by those
$(\lambda_0, \lambda_1, \lambda_\infty)$ such that
$\lambda_0^a = \lambda_1^b = \lambda_\infty^c$. The group $\sfH$ visibly acts on
$\mcU$ by coordinate-wise multiplication
\begin{equation*}
  (x,y,z)\cdot (\lambda_0, \lambda_1, \lambda_\infty) \colonequals (\lambda_0
  x, \lambda_1 y, \lambda_\infty z).
\end{equation*}
Finally, let $\Pone\abc$ denote the iterated root stack of $\Pone_\Z$ at the
divisors $0,1,\infty$ with multiplicities $a,b,c$. This is the stacky version
of Darmon's $M$-curve $\mathbf{P}^1_{a,b,c}$ \cite[p.~4]{Darmon97} (see \Cref{fig:belyi-stack}).
\begin{figure}[ht]
  \centering
  \includegraphics[width=\textwidth]{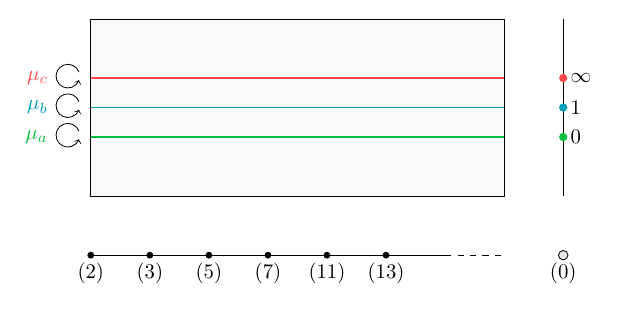}
  \caption{The Belyi stack of signature $\abc$.}
  \label{fig:belyi-stack}
\end{figure}
\begin{theorem}
  \label{thm:main} The map
  \begin{equation}
    \label{eq:j}
    j\colon \mcU_R \to \Pone_R, \quad (x,y,z) \mapsto (-Ax^a:Cz^c)
  \end{equation}
  induces an isomorphism $[\mcU_R/\sfH_R] \cong \Pone\abc_{R}$.
\end{theorem}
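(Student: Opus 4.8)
The plan is to compare the two stacks through their functors of points, to build the morphism out of $j$, and then to verify that it is an isomorphism by a local analysis over the three stacky points $0,1,\infty$. First I would recall the moduli descriptions. A $T$-point of $\Pone\abc_{R}$ is a morphism $f\colon T\to\Pone_R$ together with, for each pair (divisor, multiplicity) among $(0,a),(1,b),(\infty,c)$, a line bundle $N$ on $T$, a section $t\in\Gamma(T,N)$, and an isomorphism $N^{\otimes n}\xrightarrow{\sim}f^{*}\mcO_{\Pone}(\text{divisor})$ sending $t^{\otimes n}$ to the pullback of the tautological section (the standard presentation of an iterated root stack). A $T$-point of $[\mcU_R/\sfH_R]$ is an $\sfH_R$-torsor $P\to T$ with an $\sfH$-equivariant map $P\to\mcU_R$; pushing $P$ out along the three coordinate characters of $\Gm^{3}$ restricted to $\sfH$, this is the same as three line bundles $N_0,N_1,N_\infty$ on $T$, equipped with the isomorphisms $N_0^{\otimes a}\cong N_1^{\otimes b}\cong N_\infty^{\otimes c}$ forced by the relations defining $\sfH$, and sections $x\in\Gamma(N_0)$, $y\in\Gamma(N_1)$, $z\in\Gamma(N_\infty)$ with $Ax^{a}+By^{b}+Cz^{c}=0$ in the common line bundle and with no common fibrewise zero.

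The heart of the argument is a divisor computation on $\mcU_R$. Because $A,B,C\in R^{\times}$ and $\x,\y,\z$ are nonzerodivisors on $\mcU_R$ (for instance $\x\nmid F$ and $R[\x,\y,\z]/(F)$ is Cohen--Macaulay, so $\x$ lies in no associated prime of $(F)$; likewise $\y,\z$), the loci $D_0=\{x=0\}$, $D_1=\{y=0\}$, $D_\infty=\{z=0\}$ are effective Cartier divisors. The pair $(-Ax^{a},Cz^{c})$ has no common zero on $\mcU_R$ — a common zero would force $x=z=0$, hence $By^{b}=0$, hence $y=0$, contradicting that we sit on the punctured cone — so $j$ is a well-defined morphism $\mcU_R\to\Pone_R$. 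Writing $1=[1:1]\in\Pone$, one computes
\[ j^{*}(0)=\{x^{a}=0\}=a\,D_0,\qquad j^{*}(\infty)=\{z^{c}=0\}=c\,D_\infty, \]
and, crucially using the Fermat relation, $j^{*}(\mathsf u-\mathsf v)=-Ax^{a}-Cz^{c}=By^{b}$, so $j^{*}(1)=\{y^{b}=0\}=b\,D_1$. Hence $N_0:=\mcO_{\mcU_R}(D_0)$ with its tautological section is an $a$-th root of $j^{*}\mcO(0)$, compatibly with tautological sections up to the unit $-A$; similarly $N_1:=\mcO(D_1)$ and $N_\infty:=\mcO(D_\infty)$ supply the remaining root data, and this lifts $j$ to $\tilde\jmath\colon\mcU_R\to\Pone\abc_{R}$. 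Next I would check $\tilde\jmath$ is $\sfH$-invariant: the action scales $(-Ax^{a}:Cz^{c})$ by the common unit $\lambda_0^{a}=\lambda_1^{b}=\lambda_\infty^{c}$ (fixing the map to $\Pone$), preserves each $D_i$, and multiplies the tautological section of $N_i$ by $\lambda_i$ — exactly a $2$-isomorphism between the two pullbacks of $\tilde\jmath$ along $\mcU_R\times\sfH_R\rightrightarrows\mcU_R$, with the cocycle condition holding since the $\lambda_i$ multiply. Thus $\tilde\jmath$ descends to $\Phi\colon[\mcU_R/\sfH_R]\to\Pone\abc_{R}$ lying over $\Pone_R$.

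To conclude I would argue that $\Phi$ is an isomorphism. Both sides are smooth, proper, tame Deligne--Mumford stacks over $R$ with coarse space $\Pone_R$ (for $[\mcU_R/\sfH_R]$: $\mcU_R$ is $R$-smooth; every $\sfH$-stabilizer is cyclic, contained in $\mu_a$, $\mu_b$ or $\mu_c$ according to which single coordinate vanishes, hence étale and tame; and $j$ realizes $\Pone_R$ as the uniform categorical quotient), and $\Phi$ is a morphism over $\Pone_R$. It is representable, i.e.\ faithful on automorphism groups of geometric points: this is automatic over $\Pone_R\smallsetminus\{0,1,\infty\}$, and at $0$, say, it says that the stabilizer $\mu_a$ of a point $(0,y_0,z_0)$ — acting by scaling on the coordinate $x$ — is carried isomorphically onto the root-stack inertia $\mu_a$, which holds because $\tilde\jmath$ sends $x$ to the tautological section of $N_0$ on which that $\mu_a$ acts by scaling. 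Being representable, proper, and quasi-finite (its fibres map to fibres of the identity of $\Pone_R$), $\Phi$ is finite; it is flat by miracle flatness, source and target being regular of the same dimension with finite fibres, hence finite locally free; and its degree is $1$ because $\Phi$ is an isomorphism over the dense open $\Pone_R\smallsetminus\{0,1,\infty\}$, where $\sfH$ acts freely on $\mcU_R$ with quotient that open (any two points of a fibre of $j$ differ by the element $(x'/x,y'/y,z'/z)$ of $\sfH$). A finite locally free morphism of degree $1$ is an isomorphism.

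I expect the last step — the comparison of the stacky structures at $0,1,\infty$, i.e.\ the representability check — to be the main obstacle, together with the bookkeeping of the unit twists $A,B,C$ and of the convention $1=[1:1]$ in the root data. As a cross-check I would keep in reserve the direct construction of $\Phi^{-1}$: a $T$-point of $\Pone\abc_{R}$ produces an $\sfH$-torsor $P$ from $(N_0,N_1,N_\infty)$ together with the isomorphisms $N_0^{\otimes a}\cong N_1^{\otimes b}$ and $N_1^{\otimes b}\cong N_\infty^{\otimes c}$ obtained by composing the structure isomorphisms of the root data, and, after an fppf base change extracting suitable $a$-th, $b$-th and $c$-th roots of $A,B,C$, an $\sfH$-equivariant map $P\to\mcU_R$; the ambiguity in these roots is precisely an element of $\sfH(T)$, so the resulting point of $[\mcU_R/\sfH_R](T)$ is well defined, and one checks this yields an inverse to $\Phi$.
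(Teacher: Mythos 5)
Your construction of the map is the same as the paper's: both lift $j$ to the root stack via the relations $j^*\OO_{\Pone}(-P_0)=\mcL_0^{\otimes a}$, $j^*\OO_{\Pone}(-P_1)=\mcL_1^{\otimes b}$, $j^*\OO_{\Pone}(-P_\infty)=\mcL_\infty^{\otimes c}$ with $\mcL_0,\mcL_1,\mcL_\infty$ cut out by the coordinates (your divisor computation $j^*(1)=b\,D_1$ via the Fermat relation is exactly this), and both identify the stabilizers at the three stacky points with $\mu_a,\mu_b,\mu_c$. Where you genuinely diverge is the endgame. The paper reduces to geometric fibers, computes the coarse space of $[\mcU_s/\sfH_s]$ as a good moduli space on the charts $D(\x),D(\z)$ (Alper), and then applies the rigidity lemma for tame stacky curves \cite[Lemma 5.3.10(a)]{Voight&Zureick-Brown22} --- a stabilizer-preserving bijection over an isomorphism of coarse spaces is an isomorphism --- before spreading out over $R$. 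You instead argue directly over $R$: representable (checked on inertia at $0,1,\infty$), finite, flat by miracle flatness, and of degree $1$ since $\sfH$ acts freely and transitively on fibers of $j$ away from $\{0,1,\infty\}$. This buys independence from the stacky-curve classification lemma and from the fiber-by-fiber reduction; the price is that you must know $[\mcU_R/\sfH_R]$ is separated and proper (equivalently, that the $\sfH$-action on $\mcU_R$ is proper) and that $\Pone_R$ is its coarse space --- you assert both, but the paper makes essentially the same assertion when it declares both sides tame relative stacky curves, and the coarse-space claim is the same invariant-ring calculation either way. Your functor-of-points description of $[\mcU_R/\sfH_R]$ obtained by pushing the torsor out along the three coordinate characters (line bundles $N_0,N_1,N_\infty$ with $N_0^{\otimes a}\cong N_1^{\otimes b}\cong N_\infty^{\otimes c}$ and sections satisfying the Fermat relation) does not appear in the paper and is what makes your reserve construction of the inverse --- extracting roots of $A,B,C$ fppf-locally, with ambiguity exactly an element of $\sfH$ --- go through; it would be a worthwhile addition.
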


\begin{warning}
  For a general triple $\abc$ of positive integers, the group $\sfH$ is not
  isomorphic to $\Gm$. We show in \Cref{lem:H-structure} that this is only the
  case when $\gcd(bc,ab,ac) = 1$. For example, in the important case of
  $\abc = (p,p,p)$ for some prime $p$, the group scheme $\sfH$ is isomorphic to
  $\Gm\times \mu_p\times \mu_p$.
\end{warning}

To further motivate the method of Fermat descent and introduce notation, we
revisit the one of the very first instances of the method of \emph{infinite
  descent} from this point of view.
\subsection{Fermat's last theorem for \texorpdfstring{$n=4$}{n=4}}
\label{sec:x4+y4-z2}
The term \emph{infinite descent} was coined in a
\href{https://rcin.org.pl/impan/dlibra/publication/198805/edition/180222/content}{letter}
  from Fermat to Carcavi:

\begin{quotation}
  \noindent\rule{10.2cm}{0.4pt}
  
  \noindent And because the ordinary methods found in the books were insufficient to
  prove such difficult propositions, I finally discovered a completely novel
  path to reach them. I called this method of proof \emph{infinite or indefinite
  descent}, etc.; at first, I used it only to prove negative propositions, such
  as, for example:

  ...

  \noindent That there is no right triangle with integer sides whose area is a
  square number.
  
  \noindent\rule{10.2cm}{0.4pt}
  
  \hfill Fermat, 1659
\end{quotation}
Fermat actually proved the claim above (see \cite[Chapter XXII, p.
615]{Dickson66}). Closely related to this problem is his famous ``Last
Theorem'' in the case of exponent $n=4$. In his controversial marginal notes to
the \emph{Arithmetica} of Diophantus, Fermat states that ``the sum of two
biquadrates is never a biquadrate or a square.''
(Excellent expositions of the proofs of these results via infinite descent are
given in \cite{Conrad-congruent, Conrad-descent}.)

At first glance, the method of infinite descent appears to be a simple reversed
form of the principle of mathematical induction. In fact, it is much deeper. We
present an overly complicated proof of Fermat's last theorem for exponent $n=4$
using the method of \cdef{Femat descent} with the objective of illustrating the
hidden geometry and introducing notation. The foreign definitions and
constructions presented here will be introduced later on.

Fermat's last theorem for exponent $n=4$ follows from the following stronger
statement.
\begin{theorem}[Fermat]
  \label{thm:fermat-442}
 The primitive integral solutions $(x,y,z)$ to the generalized Fermat equation $F\colon
 \x^4 + \y^4 - \z^2 = 0$ satisfy $x\cdot y \cdot z = 0$. They are the eight triples
 \[ \pm(1,0,1), \pm (1,0,-1), \pm (0,1,1), \pm (0,1,-1).\]
\end{theorem}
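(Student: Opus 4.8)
The plan is to run the method of Fermat descent for the signature $\abc = (4,4,2)$, with $(A,B,C) = (1,1,-1)$; then $\mcS = \{2\}$, $R = \Z[1/2]$, and $\mcU(\Z)$ is the set of primitive integral solutions of $F$. First I would dispose of the solutions with $xyz = 0$ by hand: since $z^2 = x^4 + y^4 \geq 0$ vanishes only at the origin, at most one of $x,y$ is zero, and coprimality then pins down the remaining coordinates, leaving exactly the eight listed triples. So it suffices to show there is no primitive solution $P_0 = (x_0,y_0,z_0)$ with $x_0 y_0 z_0 \neq 0$; suppose one exists. Note that $x_0, y_0, z_0$ are then pairwise coprime, since a common prime factor of two of them would, via $x_0^4 + y_0^4 = z_0^2$, divide the third.

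Next I would apply \Cref{thm:main}. The point $P_0 \in \mcU(\Z) \subseteq \mcU(R)$ determines a point $[P_0]$ of $[\mcU_R/\sfH_R](R) \cong \Pone(4,4,2)(R)$, and the composite of $[P_0]$ with the coarse-space morphism $\Pone(4,4,2) \to \Pone$ is $j(P_0) = (-x_0^4 : -z_0^2) \in \Pone(\Q)$. Since $x_0 y_0 z_0 \neq 0$, this point differs from $0, 1, \infty$: these would force $x_0, y_0, z_0$ respectively to vanish, using $x_0^4 - z_0^2 = -y_0^4$. The information in $[P_0]$ beyond $j(P_0)$ is stacky structure along the special fibre at $2$, and unravelling it is the heart of the descent.

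Now I would fix a geometrically Galois \belyi map of signature $(4,4,2)$; a convenient one is
\[
  \phi\colon E \longrightarrow \Pone, \qquad E\colon Y^2 = X^3 - X, \qquad (X,Y)\longmapsto \frac{X^2}{X^2 - 1},
\]
which is geometrically Galois with group $\mu_4$ (over $\Q$ a form of $\Z/4$), generated by the complex multiplication $(X,Y)\mapsto(-X, iY)$, and which has ramification indices $4, 4, 2$ over $0, 1, \infty$. Because these indices match the root-stack multiplicities of $\Pone(4,4,2)$, the map $\phi$ factors through a $\mu_4$-torsor $E \to \Pone(4,4,2)$, described in the coarse coordinate $s$ on $\Pone$ by $w^4 = s/(s-1)^3$ with $w = Y$ (as $Y^4 = X^2(X^2-1)^2$ and $X^2 = s/(s-1)$). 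Pulling this torsor back along $[P_0]$ gives a $\mu_4$-torsor over $\Spec R$, i.e.\ a class in $H^1(R,\mu_4) = R^\times/(R^\times)^4 \hookrightarrow \Q^\times/(\Q^\times)^4$; since $j(P_0)$ has $s$-coordinate $x_0^4/z_0^2$, at which $s-1$ takes the value $-y_0^4/z_0^2$, that class is
\[
  \left[\frac{s}{(s-1)^3}\right] = \left[-\frac{x_0^4 z_0^4}{y_0^{12}}\right] = \left[-\left(\frac{x_0 z_0}{y_0^3}\right)^{4}\right] = [-1] \in \Q^\times/(\Q^\times)^4,
\]
\emph{independently} of the solution $P_0$. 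It is \Cref{thm:main} that guarantees this class extends over $R$ — the torsor is unramified outside $2$ — so that $[P_0]$ lifts to an $R$-point, hence to a rational point $\widetilde P_0$, of the single quartic twist $E'$ of $E$ by $[-1]$, explicitly $E'\colon Y^2 = X^3 + X$, along a twisted \belyi map $\phi'$ with $\phi'(\widetilde P_0) = j(P_0) \notin \{0,1,\infty\}$.

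It remains to compute $E'(\Q)$. The curve $E'$ has conductor $64$, and a $2$-descent shows that its Mordell--Weil rank over $\Q$ is $0$, whence $E'(\Q) = \{O, (0,0)\}$; over $\Q$ the twisted map works out to $\phi'\colon (X,Y)\mapsto X^2/(X^2 + 1)$, which sends $O \mapsto 1$ and $(0,0)\mapsto 0$, both lying in $\{0,1,\infty\}$. This contradicts $\phi'(\widetilde P_0)\notin\{0,1,\infty\}$, so $P_0$ does not exist, proving the theorem. The decisive obstacle is the rank computation: the signature $(4,4,2)$ is \emph{Euclidean}, $\tfrac14 + \tfrac14 + \tfrac12 = 1$, so the \belyi curve has genus $1$, and ``finitely many twists, each with finitely many rational points'' is not formal — the rank-$0$ statement is precisely where Fermat's classical infinite descent resides, now appearing as the triviality of the Mordell--Weil group of a CM elliptic curve of $2$-power conductor. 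The other delicate point is the torsor-class calculation: it must be carried out on the stack $\Pone(4,4,2)$ and not merely on $\Pone$ — where $j(P_0)$ can meet $\{0,1,\infty\}$ modulo odd primes dividing $x_0 y_0 z_0$ — in order to see that the class extends over $R$ and hence that only one twist ever occurs.
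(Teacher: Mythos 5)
Your proof is correct, and while it follows the same three-step Fermat-descent skeleton as the paper (the isomorphism $[\mcU_R/\sfH_R]\cong\Pone(4,4,2)_R$, the Belyi cover $Y^2=X^3-X$ of signature $(4,4,2)$, quartic twists classified by $\HH^1(R,\mu_4)=R^\times/(R^\times)^4$), it diverges genuinely at the twisting/sieving stage. The paper enumerates all eight classes $d\in\{\pm1,\pm2,\pm4,\pm8\}$, observes that $E_2$ and $E_{-8}$ have positive rank so finiteness alone cannot close the argument, and then sieves coordinate-wise ($y^4=-\lambda^2 d$ forces $d\in\{-1,-4\}$), finishing with the rational points of both $E_{-1}$ and $E_{-4}$ and a separate elimination of the stray point $(1:2)$ on $E_{-4}$. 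You instead evaluate the descent map exactly: writing the torsor $E\to\Pone(4,4,2)$ in the Kummer form $w^4=s/(s-1)^3$ and substituting $s_0=x_0^4/z_0^2$, $s_0-1=-y_0^4/z_0^2$, the fiber class is $\bigl[-(x_0z_0/y_0^3)^4\bigr]=[-1]$ identically, so only the single twist $Y^2=X^3+X$ ever occurs; this is a strictly sharper version of the paper's sieve (it retains the full class in $\Q^\times/(\Q^\times)^4$ rather than only the square class of $-d$, which is why $E_{-4}$ disappears entirely), and it correctly isolates the rank-zero computation for that one CM curve as the place where the classical infinite descent lives. Your class computation is consistent with the paper's own convention, under which the fiber of $\Phi_d$ over $s_0$ is $w^4=d^3\,s_0/(s_0-1)^3$, trivial exactly when $[d]=[s_0/(s_0-1)^3]$, and $[-1]$ is its own inverse so no left/right torsor convention can flip it. The only point I would tighten: pinning the class down inside $R^\times/(R^\times)^4$ uses the integral torsor structure $\Pone(4,4,2)_R\cong[E/\Autsch(\Phi)]$ (the paper's Step 1/\Cref{lemma:good-reduction}) together with \Cref{thm:descent} and the injectivity of $R^\times/(R^\times)^4\hookrightarrow\Q^\times/(\Q^\times)^4$, not \Cref{thm:main} alone — but you essentially say this, and the logic goes through. (Incidentally, your images $O\mapsto 1$, $(0,0)\mapsto 0$ under $X^2/(X^2+1)$ are what I also compute; the paper's displayed set $\{1,\infty\}$ for $\phi_{-1}(E_{-1}(\Q))$ appears to have $0$ and $\infty$ interchanged.)
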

\begin{proof}[Proof sketch]
  Let $\mcU$ be the punctured cone associated to $F$. Our goal is to show that
  $\mcU(\Z)$ contains only the eight elements listed above. The group scheme
  $\sfH$ defined above the statement of \Cref{thm:main} (and discussed in
  detail in \Cref{sec:H}) acts on $\mcU$ by coordinate-wise multiplication, and
  it is not hard to see there are non-trivial stabilizers. For instance, the
  $\Qbar$-points can be described explicitly as
  \begin{equation*}
    \sfH(\Qbar) = \brk{(\xi_0\lambda, \xi_1\lambda, \xi_\infty\lambda^2 ) :
      \lambda\in \Qbar^\times, \xi_0, \xi_1 \in \mu_2(\Qbar), \xi_\infty \in \mu_4(\Qbar)}.
  \end{equation*}
  From this description, we see that any point $(x,y,z)\in \mcU(\Qbar)$ with
  $x = 0$, $y = 0$, or $z = 0$ has $\mu_2(\Qbar), \mu_2(\Qbar),$ or
  $\mu_4(\Qbar)$ stabilizers in $\sfH(\Qbar)$, respectively. On the other hand,
  any point $(x,y,z) \in \mcU(\Qbar)$ with $x\cdot y\cdot z \neq 0$ has no
  non-trivial stabilizers.

  The idea is to understand the arithmetic of $\mcU$ by studying instead the
  arithmetic of the stack quotient $[\mcU/\sfH]$.
 
 \begin{remark}[Notation]
    If $\XX$ is a stack and $R$ is a ring, we denote by $\XX(R)$ the \emph{groupoid}
    of $R$-points, and by $\XX\ideal{R}$ the \emph{set} of $R$-points, (see
    \Cref{sec:conventions-stacks}).
  \end{remark}

  Descent theory (see \Cref{thm:descent}) gives the partition
  \begin{equation}
    \label{eq:3}
    [\mcU/\sfH]\ideal{\Z} = \bigsqcup_{\delta \in
      \HH^1(\Z,\sfH)}\mcU_\delta(\Z)/\sfH(\Z) \supset \mcU(\Z)/\sfH(\Z),
  \end{equation}
  where the (fppf) cohomology group $\HH^1(\Z,\sfH)$ is finite, and the
  $\mcU_\delta$ are certain twists of $\mcU$ arising from generalized Fermat
  equations $F_\delta$. (In fact, $\HH^1(\Z,\sfH)$ is trivial (\Cref{lemma:H1-H}).)
  \begin{remark}
    The group $\sfH$ is not the only (fppf) group scheme acting on $\mcU$. For
    instance, one can also consider the image $\Gm(1,1,2)$ of the map
    $\Gm \to \Gm^3 \colon \lambda \mapsto (\lambda, \lambda, \lambda^2)$. The
    quotient $[\mcU/\Gm(1,1,2)]$ has the technical advantage of being a closed
    substack of the weighted projective stack $\PP(1,1,2)$. The reason for
    choosing $\sfH$ over $\Gm(1,1,2)$ will become clear shortly.
  \end{remark}

  \medskip
  \noindent \textbf{Step 0:} In this case, $p=2$ is the only bad prime, so
  $\mcS = \brk{2}$ and we abbreviate $R = \Z[1/2]$. From \Cref{thm:main}, we
  have an isomorphism of stacks
  \begin{equation}
    \label{eq:step-0-iso-441}
    [\mcU_R/\sfH_R] \cong \Pone(4,4,2)_R.
  \end{equation}
  
  The notable consequence of this isomorphism is that, by the definition of the
  \belyi stack $\Pone(4,4,2)$, geometrically Galois Belyi maps
  $\phi\colon X \to \Pone_\Q$ with signature $(4,4,2)$ factor through {\'e}tale
  covers $\phi\colon X \to \Pone(4,4,2)_\Q$, and thus we learn new information
  about $\mcU(\Z)$ by studying the sets of rational points $X(\Q)$ arising from
  such maps.

  \medskip
  \noindent \textbf{Step 1: (Covering)} The first task of the method of Fermat
  descent is to find a geometrically Galois Belyi map $\phi$ of signature
  $(4,4,2)$ with good reduction outside of $\mcS = \brk{2}$ (see
  \Cref{def:galois-belyi-map}). The
  \href{https://beta.lmfdb.org/Belyi/4T1/4/4/2.2/a/}{LMFDB (beta)}
  \cite{lmfdb:442-belyi-map} gave us the elliptic curve
  $E_\Q\colon v^2w = u^3-uw^2 \subset \P^2_\Q$ defined over $\Q$ with
  $j$-invariant $1728$ and the map
  \begin{equation}
    \label{eq:belyi-442}
    \phi\colon E_\Q \to \Pone_\Q, \quad (u:v:w) \mapsto (u^2:u^2-w^2).
  \end{equation}
  The same equations define an $R$-model $\Phi\colon E \to \Pone_R$. Let $\Autsch(\Phi)$
  be the automorphism group scheme over $\Spec R$ of $\Phi$. In this
  situation, we have
  \begin{equation}
    \label{eq:2}
    \Pone(4,4,2)_R \cong [E/\Autsch(\Phi)].
  \end{equation}

  \medskip
  \noindent \textbf{Step 2: (Twisting)} It turns out that $\Autsch(\Phi) = \Autsch(E)$. In
  addition, $\Autsch(E) \cong \mu_4 = \Spec R[t]/\ideal{t^4-1}$ by \cite[Corollary
  III.10.2]{Silverman09}. By Kummer theory, we know that the cohomology group
  $\HH^1(R,\mu_4)$ is isomorphic to the finite group
  $R^\times/(R^\times)^4 \cong \brk{\pm 1, \pm 2, \pm 4, \pm 8}$. Once again,
  descent theory gives a partition
  \begin{align*}
    [E/\Autsch(\Phi)]\ideal{R} = \bigsqcup_{\tau \in \HH^1(R,\mu_4)}
                     \phi_\tau(E_\tau(R)) = \bigsqcup_{d \in R^\times/(R^\times)^4}
                     \phi_d(E_d(\Q)).
  \end{align*}
  The last equality follows because the twists $E_d$ are all proper and thus
  $E_d(R) = E_d(\Q)$ by the valuative criterion.

  The final remaining task in this step is to calculate the quartic twists
  $\Phi_d\colon E_d \to \Pone_R$ (see \Cref{sec:descent-theory-revisited}). By
  recalling that $\mu_4$ acts on the elliptic surface $E\colon v^2 = u^3- u$ via
  \begin{equation*}
    (u,v) \cdot \zeta \colonequals (\zeta^2u, \zeta^3 v),
  \end{equation*}
  and that for each $d \in \HH^1(R,\mu_4)$ the corresponding (left fppf)
  $\mu_4$-torsor is $T_d \colonequals \Spec R[t]/\ideal{t^4-d} \to \Spec R$
  with action
    \begin{equation*}
    \zeta\cdot \sqrt[4]{d} \colonequals \zeta\sqrt[4]{d},
  \end{equation*}
  an invariant calculation gives that $E_d\colon v^2w = u^3-duw^2 \subset \P^2_R$ and
  \begin{equation}
    \label{eq:belyi-442-d}
    \Phi_d \colon E_d\to \Pone_R, \quad (u:v:w) \mapsto (u^2:u^2-dw^2).
  \end{equation}
  This coincides with the Galois cohomology perspective in \citep[Proposition
  X.5.4]{Silverman09}. Indeed, it turns out that $\HH^1(R, \Autsch(\Phi))$ is isomorphic
  to the Galois cohomology group $\HH^1_{\brk{2}}(\Q, \mu_4(\Qbar))$
  parametrizing isomorphism classes of Galois {\'e}tale $\Q$-algebras unramified
  outside $\brk{2}$, and with Galois group $C_4 = \mu_4(\Qbar)$.

\medskip
\noindent \textbf{Step 3: (Sieving)} To summarize, we have shown that the set $j(\mcU(\Z))$ is
contained in
\begin{equation*}
  j(\mcU(R)) \subset \bigsqcup_{d\in R^\times/(R^\times)^4} \phi_d(E_d(\Q)).
\end{equation*}
Our objective now is to sieve out the points in $\mcU(R)$ that are not in
$\mcU(\Z)$. The best case scenario here would be that the sets $E_d(\Q)$ are
finite and that $\phi_d(E_d(\Q))$ are contained in
$\brk{0,1,\infty} \subset \Pone(\Q)$. Indeed, this would imply that
$j(\mcU(\Z)) \subset \brk{0,1,\infty}$, which can easily be seen to imply
\Cref{thm:fermat-442}. Unfortunately, this is not the case: the elliptic curves
$E_{2}$ and $E_{-8}$ (with LMFDB labels \eclabel{256.b1} and \eclabel{256.b2})
\nocite{lmfdb}have infinitely many $\Q$-rational points. We are forced to take
a closer look at the rational points on the projective line arising from
$\mcU(\Z)$ and $E_d(\Q)$ simultaneously.

For any choice of $d \in \brk{\pm 1, \pm 2, \pm 4, \pm 8}$, let us consider a
point $Q$ in the intersection
$j(\mcU(\Z)) \cap \phi_d(E_d(\Q)) \subset \Pone(\Q)$, so
\begin{equation*}
  Q = (x^4:z^2) = (u^2: u^2-dw^2)
\end{equation*}
for some primitive integral solution $(x,y,z)$ to $\x^4 + \y^4 = \z^2$, and
some rational point $P = (u:v:w) \in E_d(\Q)$. Note that if $Q = 0$,
$P \in E_d(\Q)$ is the point at infinity and
$(x,y,z) = \pm(0,1,1), \pm(0, 1, -1)$. 
We assume that $P \neq (0:1:0)$. It follows that there is some
$\lambda \in \Z_{>1}$ such that $x^4 = \lambda^2 u^2,$ and
$z^2=\lambda^2(u^2-d)$. Moreover, $y^4 = z^2 - x^4 = -\lambda^2d$. But
this forces $d \in \brk{-1, -4}$. Fortunately, both $E_{-1}(\Q)$ and
$E_{-4}(\Q)$ are finite (see \eclabel{64.a4}, \eclabel{32.a4}), and a calculation gives:
\begin{align*}
  \phi_{-1}(E_{-1}(\Q)) &= \brk{1, \infty},\\
  \phi_{-4}(E_{-4}(\Q)) &= \brk{1, \infty, (1:2)}.
\end{align*}
The points $Q = 1, \infty$ are the expected ones, and the point $Q = (1:2)$ is
ruled out by the same arguments above: $x^4 = \lambda^2$ and $y^4 = 4\lambda^2$
imply that $\lambda^2 = 1$ by primitivity, and $4$ is not a fourth power.
\end{proof}

\subsection{Summary of contributions}
\label{sec:fundamental-theorem}
This article contributes a thorough exposition of the stack-theoretic approach
to the Diophantine study of generalized Fermat equations, complemented by a
number of new results. The stack-theoretic perspective offers the distinct
advantage of being more conceptual. The exposition is intended to serve as a
reference for future work applying the method of Fermat descent to tackle
numerous open problems in this area.

\begin{itemize}[leftmargin=*]
\item In \Cref{sec:quotient-stacks-descent}, we provide a concise review of
  quotient stacks and use it to present the fundamentals of \emph{descent
    theory}, in the sense of \cite[Chapter~2]{Skorobogatov01}, from this
  vantage point. The main result of this section is \Cref{thm:descent}, the
  \emph{descent theory partition}. It is stated in greater generality in
  \citep[Lemma 2.4]{Santens2023}. While this is certainly well known, we find our
  proof to be succinct and illuminating.
\item In \Cref{sec:root-stacks-belyi}, we begin by reviewing the root stack
  construction, which is due to Cadman \cite{Cadman07}. Our exposition borrows
  from Olsson’s treatment in \cite[Section~10.3]{Olsson16}. The purpose of this
  review is to understand the arithmetic of the \belyi stack $\Pone\abc$, which
  provides the stack-theoretic interpretation of Darmon’s $M$-curve
  $\mathbf{P}^1_{a,b,c}$ \cite[p.~4]{Darmon97}. It is well known to experts
  that Darmon’s $M$-curves can be interpreted as root stacks over their coarse
  moduli spaces. This perspective is hinted at by Poonen in
  \cite{Poonen_slides2006}, and addressed more directly by Santens in
  \href{https://mathoverflow.net/questions/390541/relation-between-stacky-curves-and-m-curves}{this
    MathOverflow post} \cite{SantensMO} and \cite[Lemma 2.1]{Santens2023}. In
  turn, this is an instance of a general feature of Deligne--Mumford stacks
  \cite{Geraschenko&Satriano17}. The main result of this section is
  \Cref{lemma:R-points-belyi-stack}, which explicitly characterizes the PID
  points on a \belyi stack.
\item In \Cref{sec:fundamental-theorem}, we prove our main result:
  \Cref{thm:re-main}. In spirit, this theorem captures a striking connection
  between:
  \begin{itemize}
  \item \textbf{Arithmetic:} the Diophantine
    equations $\gfe = 0$, and
    \item \textbf{Geometry:} the Riemann sphere $\C\Pone$
      with three orbifold points $(0,1,\infty)$ of multiplicities $\abc$.
 \end{itemize}
  This connection was already identified by
  \cite{Darmon&Granville95}, and formulated in terms of stacks by
  \cite{Poonen_slides2006}, \cite{PoonenSchaeferStoll07},
  \cite[Example~5.4.7]{Voight&Zureick-Brown22}, and \cite{Poonen_slides2023}.
  The originality of our contribution lies in \Cref{sec:H}, where we work out
  the combinatorics of the general case, when $\gcd(bc, ac, ab)$ is possibly
  greater than one.
\end{itemize}

\subsection*{Acknowledgements}
\label{sec:acknowledgements}
This work is part of the author’s PhD thesis. We thank David Zureick-Brown,
John Voight, and Andrew Kobin for many enlightening conversations on this topic
and for their valuable feedback. We are also grateful to Bjorn Poonen for
agreeing to serve on the thesis committee and for his detailed and insightful
comments on an earlier draft.

\section{Quotient stacks and descent}
\label{sec:quotient-stacks-descent}

\subsection{Conventions on stacks}
\label{sec:conventions-stacks}
Recall that a morphism of schemes is \cdef{fppf} if it is faithfully flat and
locally of finite presentation (see \cite[Definition 3.4.1]{Poonen2017}). For a
choice of base scheme $S$, we work on the \cdef{big fppf site}
$S_\fppf = (\Sch_{/S})_\fppf$. This is the category $\Sch_{/S}$ of schemes over
$S$ where the open coverings are families $\brk{U_i \to U}$ of $S$-morphisms such
that $\bigsqcup_i U_i \to U$ is fppf.

\begin{definition}
  A \cdef{category over $S$} is a pair $(\XX,\rmp)$ where $\XX$ is a category
  and $\rmp\colon \XX \rightsquigarrow \Sch_{/S}$ is a functor. A morphism $f\colon y \to z$ in
  $\XX$ is called \cdef{cartesian} if given any morphism $g\colon x \to z$ and
  a factorization $\rmp(f) \circ \phi \colon \rmp(x) \to \rmp(y) \to \rmp(z)$
  of $\rmp(g)$, there exists a unique morphism $h\colon x \to y$ such that
  $\rmp(h) = \phi$ and $g = h\circ f$.
\end{definition}

\begin{equation}
  \label{eq:cartesian-def}
  \begin{tikzcd}
    \XX \arrow[d, "\rmp"', rightsquigarrow]& & x \arrow[r, "h",
    dashed]\arrow[rr, bend left, "g"] \arrow[d, rightsquigarrow, mapsto] & y\arrow[r, "f"] \arrow[d, rightsquigarrow, mapsto] & z  \arrow[d, rightsquigarrow, mapsto]\\
    \Sch_{/S} & & \rmp(x) \arrow[r, "\phi"]& \rmp(y)\arrow[r, "\rmp(f)"] & \rmp(z)
  \end{tikzcd}
\end{equation}

\begin{definition}
  Let $(\XX,\rmp)$ be a category over $S$. If $f\colon y \to z$ is a cartesian
  morphism, the object $y \in \XX$ is called a \cdef{pullback} of $z$ along
  $\rmp(f)$. Given an $S$-scheme $U$, the \cdef{category of $U$-points in
    $\XX$}, denoted $\XX(U)$, is the category of
  pullbacks over the identity. That is,

  \medskip
  \noindent \textbf{Objects:} objects $u$ in $\XX$ such that $\rmp(u) = U$.\\
  \textbf{Morphisms:} morphisms $\phi\colon v \to u$ in $\XX$ such that
  $\rmp(\phi) = \id_U$.
\end{definition}

\begin{definition}
  A \cdef{fibered category over $S$} is a category $(\XX,\rmp)$ over $S$ such
  that for every $S$-morphism of schemes $\Phi\colon V \to U$ and $u$ in
  $\XX(U)$, there exists a cartesian morphism $\phi\colon v \to u$ such that
  $\rmp(\phi) = \Phi$. In particular, this implies that $v$ is in $\XX(V)$.
\end{definition}

Fibered categories over $S$ assemble into a $2$-category (see \cite[Definition
3.1.3]{Olsson16}). Indeed, there are natural notions of (i) morphisms between
fibered categories over $S$, and (ii) morphisms between morphisms of fibered
categories over $S$. Moreover, there is a version of the Yoneda lemma (see
\cite[Chapter 3.2]{Olsson16}) in this context that justifies calling
$U \mapsto \XX(U)$ a ``functor'' of points.

\begin{definition}
  Recall that a \cdef{groupoid} is a category in which every morphism is an
  isomorphism. A \cdef{category fibered in groupoids over $S$} if a fibered
  category $\XX$ over $S$, such that for every $S$-scheme $U$, the category
  $\XX(U)$ is a groupoid. Given a category fibered in groupoids $\XX$ over $S$
  and an $S$-scheme $U$, we denote by $\XX\ideal{U}$ the set of isomorphism
  classes of the groupoid $\XX(U)$.
\end{definition}

Since our focus will be on the arithmetic of stacks, thinking about stacks in
terms of their groupoids/sets of $U$-points will be enough for most of our
applications. When we use the word \emph{stack}, we mean an algebraic stack in the
following sense.

\begin{definition}
    \label{def:stack} 
    Let $\XX$ be a category fibered in groupoids over $S$.
    \begin{enumerate}[label=(\roman*), leftmargin=*]
    \item $\XX$ is a \cdef{stack} if for every fppf cover $\brk{U_i \to U}$, the
      induced descent functor $\XX(U) \to \XX(\brk{U_i\to U})$ is an equivalence of
      categories. See \cite[Section 4.2.4]{Olsson16}.
    \item A stack $\XX$ is \cdef{algebraic} if the diagonal
      $\Delta\colon \XX \to \XX\times_S \XX$ is representable by an algebraic
      space, and $\XX$ admits a smooth surjection $X' \to \XX$ from an
      $S$-scheme $X'$. The map $X'\to \XX$ is called a \cdef{smooth
        presentation} of $\XX$. See \cite[Section 8.1]{Olsson16}.
    \item An algebraic stack $\XX$ is \cdef{Deligne--Mumford} if the smooth
      presentation above is in fact {\'e}tale. See \cite[Section 8.3]{Olsson16}.
    \end{enumerate}
\end{definition}

\subsection{Review of quotient stacks}
\label{sec:review-quotient-stacks}
We focus on an concrete kind of stacks that arise from groups acting on schemes.
\begin{situation}
    \label{situation:quotient-stack-general}
    We place ourselves in the following situation for the rest of
    \Cref{sec:review-quotient-stacks}.
    \begin{itemize}[leftmargin=*]
    \item Let $S$ be a fixed base a scheme.
    \item Let $Z$ be a scheme over $S$.
    \item Let $G$ be an fppf $S$-group scheme.
    \item Suppose that we have $Z\times_S G \to Z$ a right action of $G$ over $S$.
    \item We abbreviate $\HH^1(S,G) = \cHH^1_\fppf(S,G)$ for the \v{C}ech
      cohomology set on the big fppf site of $S$, as in \cite[Section
      6.4.4]{Poonen2017}.
    \end{itemize}
  \end{situation}

\begin{definition}[Torsor scheme]
    \label{def:torsor-scheme}
    Let $G\to S$ be an fppf group scheme. A \cdef{right fppf $G$-torsor over
      $S$} is an $S$-scheme $T \to S$ together with a right action
    $T\times_S G \to T$ such that the following conditions hold:
    \begin{enumerate}
    \item $T \to S$ is fppf.
    \item \label{item:torsor-scheme-2}The map $T\times_S G \to T\times_S T$ defined by
      $(t,g) \mapsto (t,t\cdot g)$ is an isomorphism.
    \end{enumerate}
    A \cdef{morphism of $G$-torsors} is a $G$-equivariant morphism of
    $S$-schemes.
\end{definition}

An obvious yet important example is the \cdef{trivial $G$-torsor}. This
is the fppf scheme $G\to S$ itself, with the right $G$-action given by the
multiplication law. In fact, all $G$-torsors are locally trivial.

\begin{lemma}
    \label{lemma:trivial-G-torsor}
    Let $T \to S$ be an $S$-scheme, equipped with a $G$-action
    $T\times_S G \to T$ satisfying \Cref{item:torsor-scheme-2} in
    \Cref{def:torsor-scheme}. The following conditions are equivalent.
    \begin{enumerate}[label=(\alph*)]
        \item \label{item:equiv-1-def-torsor} $T \to S$ is fppf.
        \item \label{item:equiv-2-def-torsor} $T \to S$ is fppf locally isomorphic to the trivial $G$-torsor.
        \item \label{item:equiv-3-def-torsor} $T \to S$ admits a section fppf locally.
    \end{enumerate}
\end{lemma}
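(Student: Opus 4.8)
The plan is to prove the cycle of implications $\ref{item:equiv-1-def-torsor} \Rightarrow \ref{item:equiv-3-def-torsor} \Rightarrow \ref{item:equiv-2-def-torsor} \Rightarrow \ref{item:equiv-1-def-torsor}$, since each individual step is short once set up correctly. The content of the lemma is that, in the presence of the ``principal homogeneous space'' condition \Cref{item:torsor-scheme-2}, the \fppf{} condition on $T \to S$ is equivalent to the existence of \fppf{}-local sections, which in turn is equivalent to \fppf{}-local triviality. This is the standard equivalence between torsors and $1$-cocycles, so I would keep the proof self-contained but brisk, citing \cite{Poonen2017} for the underlying descent facts.

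First I would handle $\ref{item:equiv-1-def-torsor} \Rightarrow \ref{item:equiv-3-def-torsor}$: if $T \to S$ is \fppf{}, then it is in particular faithfully flat and locally of finite presentation, hence the morphism $T \to S$ itself is an \fppf{} cover of $S$, and the diagonal-type morphism $T \to T \times_S T$ — or more simply the identity $T \to T$ viewed over $S$ — gives a tautological section of $T \times_S T \to T$ over the cover $T \to S$. Concretely, pulling back $T \to S$ along $T \to S$ yields $T \times_S T \to T$, which has the diagonal section $\Delta_{T/S}$; so $T$ acquires a section after the base change $T \to S$, which is exactly condition \ref{item:equiv-3-def-torsor}. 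Next, $\ref{item:equiv-3-def-torsor} \Rightarrow \ref{item:equiv-2-def-torsor}$: suppose $T \to S$ has a section $s\colon S' \to T_{S'}$ after some \fppf{} base change $S' \to S$. Then I claim the map $G_{S'} \to T_{S'}$, $g \mapsto s \cdot g$, is an isomorphism of $G_{S'}$-schemes. This is where \Cref{item:torsor-scheme-2} does the work: the map $T \times_S G \to T \times_S T$, $(t,g) \mapsto (t, t\cdot g)$, is an isomorphism by hypothesis; restricting the first factor to the section $s$ (i.e. pulling back along $(s, \id)\colon S' \times G_{S'} \hookrightarrow T \times_S G$ after base change to $S'$) identifies $G_{S'} \xrightarrow{\sim} T_{S'} \times_{S'} \{s\} \cong \{s\} \times_{S'} T_{S'}$, composed with the projection, with the orbit map $g \mapsto s \cdot g$; and this composite is $G_{S'}$-equivariant by the associativity of the action. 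Hence $T_{S'} \cong G_{S'}$ as $G_{S'}$-torsors, which is condition \ref{item:equiv-2-def-torsor}. Finally, $\ref{item:equiv-2-def-torsor} \Rightarrow \ref{item:equiv-1-def-torsor}$ is \fppf{} descent of the property ``\fppf{}'': the trivial torsor $G \to S$ is \fppf{} by assumption on $G$, and being faithfully flat and locally of finite presentation is \fppf{}-local on the base (see \cite[Section 3.4]{Poonen2017}); since $T \to S$ becomes isomorphic to $G_{S'} \to S'$ after the \fppf{} cover $S' \to S$, it was already \fppf{} over $S$.

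The main obstacle — really the only point requiring care — is the implication $\ref{item:equiv-3-def-torsor} \Rightarrow \ref{item:equiv-2-def-torsor}$, specifically verifying that the map cut out from \Cref{item:torsor-scheme-2} by restricting to a section is indeed the orbit map and is $G$-equivariant. The bookkeeping is: $T \times_S G \xrightarrow{\sim} T \times_S T$ via $(t,g)\mapsto(t,tg)$ restricts over the locus $t = s$ to a bijection between $\{s\}\times G_{S'}$ and $\{s\}\times (\text{orbit of } s)$, but one must check the orbit of $s$ is all of $T_{S'}$ — which follows precisely because the displayed map is an \emph{iso}, so the second projection $T\times_S G \to T$, $(t,g)\mapsto tg$, becomes surjective after base change along any point, in particular along $s$. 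The $G$-equivariance of $g \mapsto sg$ is then just $(sg)h = s(gh)$, i.e. the action axiom. Everything else is formal manipulation with base change and the stability of \fppf{} morphisms under base change and descent, for which I would simply cite \cite{Poonen2017}.
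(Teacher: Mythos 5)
Your proof is correct. The paper states this lemma without giving any proof, so there is nothing to compare against; your cycle of implications (fppf $\Rightarrow$ local sections via the diagonal of $T \times_S T \to T$; local section $\Rightarrow$ local triviality by pulling back the isomorphism $T\times_S G \xrightarrow{\sim} T\times_S T$ along the section to get the orbit map $g \mapsto s\cdot g$; local triviality $\Rightarrow$ fppf by descent of flatness, surjectivity, and finite presentation) is the standard argument and all steps check out.
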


If we care about understanding group actions (i.e., quotients), we must leave
the world of schemes. For many interesting examples, the sheafification of
$U \mapsto Z(U)/G(U)$ is not representable by a scheme. Quotient stacks
elegantly and succinctly solve this problem in terms of torsors.

\begin{definition}[Quotient stack]
  \label{def:quotient-stack}
  Define the \cdef{quotient stack} of $Z$ by $G$, denoted $[Z/G]$, to be the
  algebraic stack over $S_\fppf$ with: \\
  \textbf{Objects:} triples $(U,T,\phi)$
    \begin{equation*}
      \begin{tikzcd}
          T \arrow[d, "G_U\text{-torsor}"'] \arrow[rr, "\phi"', "G\text{-equivariant}"] & & Z \arrow[rdd, bend left, gray]& &\\
          U \arrow[rrrd, bend right, gray] & & & \\
          & & & {\color{gray}S}
            \end{tikzcd}
      \end{equation*}
    where
    \begin{enumerate}[label=(\roman*)]
    \item $U$ is an $S$-scheme,
    \item $T\to U$ is a right fppf $G_U$-torsor, and
    \item $\phi\colon T \to Z$ is a $G$-equivariant $S$-morphism.
    \end{enumerate}
    \textbf{Morphisms:} $(U',T',\phi') \to (U, T, \phi)$ are pairs $(f,h)$,
    where
    \begin{enumerate}[resume, label=(\roman*)]
    \item $f\colon U' \to U$ is an $S$-morphism of schemes, and
    \item $h\colon T' \to T$ is a $G$-equivariant morphism over $f$ inducing an
      isomorphism of $G_{U'}$-torsors $T' \cong T \times_{f,U} U'$, such that
      $\phi' = \phi\circ h$.
    \end{enumerate}
  \end{definition}

\begin{equation}
  \begin{tikzcd}
      T' \ar[d] \ar[rr, "h", color=bcolor] & &  T \ar[d] \ar[rrd, "\phi"] \\
      U' \ar[rrrrd, gray] \ar[rr, "f", color=bcolor] & & U \ar[rrd, gray] & & X
      \ar[d, gray] \ar[from=ullll, "\phi'"', crossing over, near end] \\ & & &
      & {\color{gray}S}
  \end{tikzcd}
\end{equation}

In particular, for any given $S$-scheme $U$, the groupoid $[Z/G](U)$ consists
of pairs $(T,\phi)$ with $T\to U$ a $G_U$-torsor, and $\phi\colon T \to Z$ a
$G$-equivariant $S$-morphism; and isomorphisms
$h\colon (T_1,\phi_1) \to (T_2,\phi_2)$ are simply isomorphisms
$h\colon T_1 \to T_2$ of $G_U$-torsors, compatible with the maps to $Z$.
\begin{equation}
  \begin{tikzcd}
      T_1 \ar[rd] \ar[rr, "h", color=bcolor] & &  T_2 \ar[ld] \ar[rrd, "\phi_2"] \\
      & U \ar[rrrd, gray] & & &  Z \ar[from=llllu, "\phi_1"', near end, crossing over] \ar[d, gray] \\
      & & & & {\color{gray}S}
   \end{tikzcd}
 \end{equation}

\subsection{Descent theory revisited}
\label{sec:descent-theory-revisited}
In this section, we summarize the basics of descent theory from the point of
view of quotient stacks. We follow Skorobogatov's book \cite[Section
2.2]{Skorobogatov01}, but with inverted handedness. We place ourselves in the
following situation until the end of \Cref{sec:descent-theory-revisited}.

\begin{situation}
\label{situation:geometric-operations-on-torsors}
Suppose we are in \Cref{situation:quotient-stack-general}. 
\begin{itemize}[leftmargin=*]
\item Assume that $Z$ is quasi-projective.
\item Let $T$ denote an $S$-scheme with a \textbf{left} $G$ action over $S$.
\item Let $q\colon Z \to [Z/G]$ be the natural projection map.
\item The structure map $G \to S$ is affine, and $S$ is locally noetherian.
\end{itemize}    
\end{situation}

\begin{remark}
  The last assumption ensures that there is a bijective correspondence between
  $\HH^1(S,G)$ and isomorphism classes of $G$-torsor schemes, as a consequence
  of \cite[Theorem 6.5.10]{Poonen2017}.
\end{remark}

\begin{remark}
  It is possible to work in greater generality (see \cite[Section 2.5]{Santens2023}) if we are not
  concerned with representability. In general, the contracted product of an $S$
  scheme $Z$ with a left $G$-torsor will be an algebraic space.
\end{remark}

Recall that when $G$ is not commutative, $\HH^1(S,G)$ is only a pointed set
(the distinguished element corresponds to the class of the trivial $G$-torsor)
and not an abelian group. Nevertheless, we can still perform certain algebraic
operations in this pointed set in terms of corresponding geometric operations
on torsors.
\begin{definition}[Contracted product]
  The \cdef{contracted product} $Z\conprod{G} T$ is defined as the quotient
  stack $[Z\times_S T / G]$, where $G$ acts on the \textbf{right} on
  $Z\times_S T$ via $$(z,t)\cdot g \colonequals (z\cdot g, g^{-1}\cdot t).$$
\end{definition}

A crucial application of this definition is the pushforward operation on
torsors. Given $\varphi\colon G \to H$ a homomorphism of fppf group schemes over
$S$, we consider the left action of $G$ on $H$. If $P \to S$ is a $G$-torsor, the
contracted product
\[\varphi_*P \colonequals P \conprod{G} H, \quad \text{ where } (p,h)\cdot g \colonequals (p\cdot g,
  \varphi(g)^{-1}\cdot h), \] turns out to be an $H$-torsor, called the
\cdef{pushforward of $P$ by $\varphi$}. As an application of this construction, we
have the following lemma (see \cite[Exercise
10.F]{Olsson16}).
\begin{lemma}[Induced maps on quotient stacks]
    \label{lemma:induced-map-quotient-stacks}
    Let $S$ be a scheme, and $\varphi\colon G \to H$ a homomorphism of fppf
    group schemes over $S$. Let $X$ be an $S$-scheme with a right $G$-action,
    and $Y$ and $S$-scheme with a right $H$-action. Suppose that there is an
    $S$-morphism $f\colon X \to Y$ that is compatible with the group actions.
    Then, $f$ induces a morphism of algebraic stacks
    $\bar{f}\colon [X/G] \to [Y/H]$.
\end{lemma}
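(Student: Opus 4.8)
The plan is to construct the induced map $\bar f$ directly on the level of groupoids of $U$-points, using the functorial description of quotient stacks from \Cref{def:quotient-stack}, and then invoke the Yoneda-type principle for fibered categories to conclude that this defines a morphism of stacks. Concretely, fix an $S$-scheme $U$ and an object $(T,\psi) \in [X/G](U)$, so that $T \to U$ is a right fppf $G_U$-torsor and $\psi\colon T \to X$ is a $G$-equivariant $S$-morphism. I would send this to the pair $(\varphi_* T,\ \bar\psi)$, where $\varphi_* T = T \conprod{G} H_U$ is the pushforward torsor along $\varphi$ (which is an $H_U$-torsor by the discussion preceding \Cref{lemma:induced-map-quotient-stacks}), and $\bar\psi\colon \varphi_* T \to Y$ is the map induced by $(t,h) \mapsto \psi(t)\cdot h$. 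The first thing to check is that this formula is well defined on the contracted product, i.e.\ that $\psi(t\cdot g)\cdot \varphi(g)^{-1} h = \psi(t)\cdot h$ for $g \in G$; this is exactly where the compatibility hypothesis $f\colon X \to Y$ (meaning $f(x\cdot g) = f(x)\cdot \varphi(g)$) enters — wait, more precisely one uses $\psi$ valued in $X$ and postcomposes with $f$, so the assignment should really be $(T,\psi) \mapsto (\varphi_* T,\ \overline{f\circ\psi})$ with $\overline{f\circ\psi}(t,h) = f(\psi(t))\cdot h$, and equivariance of $\psi$ together with compatibility of $f$ gives the descent to the quotient.

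Next I would record how this assignment behaves on morphisms. A morphism $(T',\psi') \to (T,\psi)$ in $[X/G](U)$ is an isomorphism of $G_U$-torsors $h\colon T' \to T$ with $\psi' = \psi \circ h$; applying $\varphi_*$ is functorial, so $\varphi_* h\colon \varphi_* T' \to \varphi_* T$ is an isomorphism of $H_U$-torsors, and one checks $\overline{f\circ\psi'} = \overline{f\circ\psi}\circ \varphi_* h$ from $\psi' = \psi\circ h$ directly. This makes $(T,\psi) \mapsto (\varphi_* T, \overline{f\circ\psi})$ a functor $[X/G](U) \to [Y/H](U)$. To upgrade these pointwise functors to a genuine $1$-morphism of fibered categories, I would verify compatibility with pullback: for an $S$-morphism $V \to U$ one needs a natural isomorphism $\varphi_*(T\times_U V) \cong (\varphi_* T)\times_U V$ compatible with the maps to $Y$, which holds because both pushforward and contracted product commute with base change (fiber products commute, and the quotient-stack formation is fppf-local). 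Then the Yoneda-type statement for categories fibered in groupoids — the ``functor of points'' principle referenced after the definition of fibered category in the excerpt — lets me conclude that this compatible family of functors is induced by an actual morphism of stacks $\bar f\colon [X/G] \to [Y/H]$, which is automatically a morphism of algebraic stacks since source and target are algebraic.

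Alternatively, and perhaps more cleanly, I would phrase the whole thing via the two canonical presentations: $X \to [X/G]$ and $Y \to [Y/H]$ are the smooth (here fppf) atlases corresponding to the trivial torsors, and $[X/G]$ is the stack-theoretic quotient whose objects over $U$ are torsors with equivariant maps to $X$; giving a map out of $[X/G]$ is the same as giving a $G$-equivariant map $X \to \mathcal Z$ for the target viewed with its $G$-action pulled back along $\varphi$. Since $f\colon X \to Y$ is $G$-equivariant for the $G$-action on $Y$ obtained by restricting the $H$-action along $\varphi$, the composite $X \xrightarrow{f} Y \to [Y/H]$ is $G$-invariant (the $G$-action on $[Y/H]$ pulled back is trivial because $Y \to [Y/H]$ coequalizes the $H$-action, hence a fortiori the $G$-action through $\varphi$), and therefore descends to $\bar f\colon [X/G] \to [Y/H]$ by the universal property of the quotient stack. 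This is the route indicated by the citation to \cite[Exercise 10.F]{Olsson16}, and it reduces the statement to the universal property of $[X/G]$ as a categorical quotient in stacks.

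The main obstacle is essentially bookkeeping rather than conceptual: one must be careful with handedness (the paper works with right actions on $X$, $Y$ but defines contracted products and pushforwards using a left action on the $H$-factor via $g\cdot h := \varphi(g)h$ or its inverse), so the well-definedness computation on $\varphi_* T = [T\times_S H/G]$ has to be done with the correct sign conventions, matching \Cref{def:quotient-stack} and the pushforward formula $(p,h)\cdot g = (p\cdot g, \varphi(g)^{-1}\cdot h)$ stated just above the lemma. The only genuinely substantive point to not gloss over is that $\varphi_* T$ really is an $H_U$-torsor — but this is granted by the discussion preceding the lemma statement, so I may cite it. Everything else (functoriality on morphisms, compatibility with base change, and the passage from pointwise functors to a morphism of stacks) is routine and I would state it without grinding through the diagrams.
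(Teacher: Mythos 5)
Your proposal is correct and follows essentially the same route as the paper's proof sketch: define $\bar f$ on $U$-points by pushing forward the $G_U$-torsor along $\varphi_U$ and sending the equivariant map $\psi$ to the induced map $\overline{f\circ\psi}$ on the contracted product. You supply the well-definedness and base-change checks that the paper leaves implicit, and your alternative via the universal property of the quotient stack is a valid (equivalent) reformulation, but the core construction is the one the paper uses.
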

\begin{proof}[Proof sketch]
  Let $U \to S$ be an $S$-scheme. At the level of $U$-points, the functor
  $\bar f(U) \colon [X/G](U) \to [Y/H](U)$ is defined in the following way.
  Recall that a $U$-point on $[X/G]$ is a triple $(U, T, \phi)$ as in
  \Cref{def:quotient-stack}. First, consider the pullback
  $\varphi_U\colon G_U \to H_U$. Pushing forward the $G_U$-torsor $T$ via
  $\varphi_U$ gives a triple $(U, (\varphi_U)_*T, (\varphi_U)_*(f\circ \phi))$.
  \begin{equation*}
    \begin{tikzcd}
      T \arrow[d] \arrow[r, "\phi"] &  X \arrow[d, gray, ""{name=L, below}]
      & (\varphi_U)_*T \arrow[r] \arrow[d, ""{name=R}] & Y \arrow[d, gray] & \\
      U \arrow[r, gray] & {\color{gray} [X/G]} & U \arrow[r, gray] &
      {\color{gray} [Y/H]} \arrow[mapsto, from=L, to=R,
      caribbeangreen, shorten=7mm]
     \end{tikzcd}
  \end{equation*}
\end{proof}

The following lemma is a restatement of \cite[Section 6.5.6]{Poonen2017}.
\begin{lemma}[Twisting by fppf descent]
    \label{lemma:twisting}
    Given $\tau \in \HH^1(S, G)$, let $T \to S$ be a \textbf{left} fppf
    $G$-torsor corresponding to $\tau$. Then:
    \begin{enumerate}[label=(\roman*), leftmargin=*]
    \item \label{item:Z-tau} The contracted product $Z\conprod{G} T$ is represented by a
      quasi-projective $S$-scheme $Z_\tau$. We call this the \cdef{twist of $Z$ by
        $\tau$}.
    \item \label{item:trivial-torsor} If $T = G$ is the trivial left
      $G$-torsor, then $Z_\tau \cong Z$ as $S$-schemes with a right $G$-action.
    \item \label{item:inner-twist} Taking $Z = G$ acting on itself by conjugation, the twist
      $Z_\tau = G_\tau$ is an affine fppf group scheme over $S$. It is called the
      \cdef{inner twist} of $G$ by $\tau$.
    \item \label{item:right-Gt-torsor} The twist $Z_\tau$ is a right fppf
      $G_\tau$-torsor over $S$. Moreover, there is an isomorphism
      $[Z/G] \cong [Z_\tau/G_\tau]$. In particular, there is an induced map
      $q_\tau\colon Z_\tau \to [Z/G]$, called the \cdef{twist of $q$ by
        $\tau$}.
    \item \label{item:bitorsor} The $S$-scheme $T$ is a $(G,G_\tau)$-bitorsor. The same
      $S$-scheme with the inverse $G$ action
      $t\cdot g \colonequals g^{-1}\cdot t$ is a $(G_\tau,G)$-bitorsor. We denote
      \cdef{inverse (right) $G$-torsor} by $T^{-1}$.
    \item \label{item:inverse-torsor} Finally, the contracted product
      $T\inv\conprod{G}T$ is isomorphic to the trivial $G$-torsor.
    \end{enumerate}
\end{lemma}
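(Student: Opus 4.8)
All six assertions repackage \cite[Section~6.5.6]{Poonen2017} in our right-handed conventions, and my plan is to exploit one organizing principle: the fppf cover $T\to S$ trivializes everything in sight, so I would reduce each claim to a bookkeeping computation over the \emph{trivial} torsor together with an application of fppf descent. Concretely, the action $(z,t)\cdot g\colonequals(z\cdot g,\,g\inv\cdot t)$ on $Z\times_S T$ is free because $T$ is a torsor (\Cref{item:torsor-scheme-2}), so $Z\conprod{G}T=[Z\times_S T/G]$ is at worst an algebraic space; the projection $Z\times_S T\to T$ is equivariant for the right action $t\cdot g\colonequals g\inv\cdot t$ on $T$, and since $T$ is a $G$-torsor, $[T/G]\cong S$, which furnishes the structure map $Z\conprod{G}T\to S$. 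Base changing along $T\to S$ and applying the trivialization $T\times_S T\cong G\times_S T$ should yield a $G$-equivariant isomorphism
\begin{equation*}
  (Z\conprod{G}T)\times_S T\;\cong\; Z\times_S T
\end{equation*}
of schemes over $T$; likewise $G_\tau\colonequals G\conprod{G}T$ (for $G$ acting on itself by conjugation) becomes $G\times_S T$ over $T$. I would then extract every statement from these two facts.

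\textbf{(i) Representability --- the one genuine obstacle.} Here I must promote ``$Z\conprod{G}T$ is an algebraic space that becomes $Z\times_S T$ over the fppf cover $T\to S$'' to ``$Z\conprod{G}T$ is a quasi-projective $S$-scheme $Z_\tau$''. The plan is to invoke effectivity of fppf descent for quasi-projective schemes \cite[Section~6.5.6]{Poonen2017}: one transports a relatively ample line bundle from $Z$ to a descent datum on $Z\times_S T\to T$ along $T\to S$ (affineness of $G\to S$ makes the torsor $T\to S$ affine and keeps this step under control, after passing to a linearizable power of the bundle if necessary), obtaining the quasi-projective model $Z_\tau$ together with its residual right $G$-action. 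This is the only point where the hypotheses ``$Z$ quasi-projective'', ``$G$ affine'', ``$S$ locally noetherian'' get used essentially, and I expect it to be the crux; everything else is formal bookkeeping.

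\textbf{(ii)--(iii) Trivial torsor and inner twist.} If $T=G$, I would use the invariant morphism $Z\times_S G\to Z$, $(z,g)\mapsto z\cdot g$, to identify $Z\conprod{G}G\cong Z$ together with its original right $G$-action, which is \Cref{item:trivial-torsor}. For \Cref{item:inner-twist}, multiplication, inversion and the unit of $G$ are equivariant for the conjugation action and therefore descend along $G_\tau\to S$, making $G_\tau$ an $S$-group scheme; affineness of $G_\tau\to S$ would follow from that of $G\to S$ because \emph{affine} morphisms satisfy effective fppf descent (no quasi-projectivity needed here), and $G_\tau\to S$ is fppf because this property is fppf-local on the base and holds after base change to $T$.

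\textbf{(iv)--(vi) The bitorsor formalism.} I would identify $G_\tau$ with the sheaf $\Autsch_G(T)$ of automorphisms of $T$ as a left $G$-torsor --- this is precisely the inner twist --- noting that such automorphisms act on $T$ on the right and commute with the left $G$-action, so $T$ is a $(G,G_\tau)$-bitorsor, and reversing handedness by inversion exhibits the same scheme as the $(G_\tau,G)$-bitorsor $T\inv$ of \Cref{item:bitorsor}. For \Cref{item:right-Gt-torsor}: the right $G_\tau$-action on the $T$-factor of $Z\times_S T$ descends to $Z_\tau$, and I would check fppf-locally on $S$ that this makes $Z_\tau$ a right $G_\tau$-torsor, since over $T$ it becomes the trivial case of (ii)--(iii); the equivalence $[Z/G]\cong[Z_\tau/G_\tau]$ I would produce by contracting $U$-point torsors with the bitorsor $T$ (a bitorsor refinement of \Cref{lemma:induced-map-quotient-stacks}), with quasi-inverse given by contraction against $T\inv$ and the canonical $2$-isomorphism $T\conprod{G_\tau}T\inv\cong G$; composing $Z_\tau\to[Z_\tau/G_\tau]$ with this equivalence then defines $q_\tau$. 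Finally, \Cref{item:inverse-torsor} is the mirror statement: after base change to $T$ the contracted product $T\inv\conprod{G}T$ becomes a contracted product of trivial torsors and is hence trivial, with the resulting canonical section descending; equivalently, ``$-\tau$ followed by $\tau$'' is the neutral class of $\HH^1(S,G)$.
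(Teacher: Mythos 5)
Your plan is correct, and for parts \ref{item:Z-tau}--\ref{item:inner-twist} it is essentially the paper's argument: representability is reduced to effective fppf descent for quasi-projective schemes (the paper simply cites Skorobogatov's affine case, whereas you sketch the ample-line-bundle descent datum, correctly flagging it as the only place where quasi-projectivity, affineness of $G$, and noetherianness of $S$ enter), and the trivial-torsor and inner-twist claims are handled by descending the relevant morphisms. Where you genuinely diverge is in organization and in \ref{item:right-Gt-torsor}--\ref{item:inverse-torsor}. The paper works by explicit point-level equivariance computations on $Z\times_S T$ and $G\times_S T$ (e.g.\ the formula $(z,x,g,t)\mapsto(z\cdot g,t)$ for the $G_\tau$-action), obtains a map $[Z_\tau/G_\tau]\to[Z/G]$ via \Cref{lemma:induced-map-quotient-stacks} applied to the projection $Z\times_S T\to Z$, and then simply asserts that this map is an isomorphism; for \ref{item:inverse-torsor} it invokes $Z\times_{[Z/G]}Z\cong Z\times_S G$ together with \Cref{lemma:trivial-G-torsor}. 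You instead run everything through the single principle that base change along the fppf cover $T\to S$ trivializes the situation, and you package \ref{item:right-Gt-torsor}--\ref{item:inverse-torsor} in the bitorsor formalism $G_\tau\cong\Autsch_G(T)$, producing the equivalence $[Z/G]\cong[Z_\tau/G_\tau]$ together with an explicit quasi-inverse (contraction against $T\inv$, using $T\conprod{G_\tau}T\inv\cong G$). That buys a complete argument precisely at the step the paper leaves unjustified, at the cost of importing the bitorsor machinery. One caveat, shared with the paper's own proof: the opening claim of \ref{item:right-Gt-torsor}, that $Z_\tau$ is a right $G_\tau$-torsor, holds only when $Z$ is itself a right $G$-torsor over $S$ (your fppf-local reduction makes this visible, since over $T$ the claim becomes ``$Z_T$ is a $G_T$-torsor''); for general $Z$ one only gets the $G_\tau$-action and the stack isomorphism, which is all that \Cref{thm:descent} actually uses.
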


\begin{proof}
  \ref{item:Z-tau} The representability of $Z_\tau = Z\conprod{G}T$ is an
  application of fppf descent. See \cite[Lemma 2.2.3]{Skorobogatov01} for a
  proof when $Z$ is affine. 

  \ref{item:trivial-torsor} We have the morphism $Z\times_S G \to Z\times_S Z$ given by
  $(z,g) \mapsto (z,z\cdot g)$. Observe that it is $G$-equivariant for the twisted action
  on $Z\times_S G$, and the action
  $(z_1,z_2)\cdot g \colonequals (z_1\cdot g,z_2)$ on $Z\times_S Z$. 
  This gives a morphism
  of quotient stacks $Z_\tau = [Z\times_S G/G] \to [Z\times_S Z / G]$. Since the first
  projection $Z\times_S Z \to Z$ is $G$-equivariant for the trivial $G$-action
  on $Z$, we get a map $\psi\colon Z_\tau \to Z$. On the other hand, we have a morphism
  $\phi\colon Z \to Z_\tau$ induced by $Z \to Z\times_S G$. To see that these are mutual inverses,
  it is enough to realize that the following diagram is commutative
  \begin{equation*}
    \begin{tikzcd}
      Z \arrow[d] \arrow[dr, bend left, "\phi"] & \\
      Z\times_S G \arrow[r] \arrow[d] & Z_\tau \arrow[d, "\psi"] \\
      Z\times_S Z \arrow[r, "\mathrm{pr}_1"'] & Z .
    \end{tikzcd}
  \end{equation*}
  
  \ref{item:inner-twist} Since $G \to S$ is affine, the same will be true for
  $G_\tau\to S$ by fppf descent. Checking that $G_\tau$ is an $S$-group is a
  matter of pulling back the group operations to $T$ and verifying that they
  are $G$-equivariant under the twisted action. For example, consider the
  inverse morphism $\iota\colon G \to G$ pulled back to
  $\iota\times_S T\colon G\times_S T \to G\times_S T$. Then, we have that
  $(g,t)\cdot h = (h^{-1}gh, h^{-1}t)$ maps to
  $(h^{-1}g^{-1}h, h^{-1}t) = (g^{-1}, t)\cdot h$. We obtain the twisted
  inverse morphism $G_\tau \to G_\tau$ by passing to the quotient. 

  \ref{item:right-Gt-torsor} Consider the morphism
  $\phi\colon (Z\times_S T)\times_S (G\times_S T) \to Z\times_S T$ given on
  points by $(z,x,g,t) \mapsto (z\cdot g, t)$. Note that
  $(z,x,g,t)\cdot h = (z\cdot h, h^{-1}x,h^{-1}gh,h^{-1}t)$ maps to
  $(z\cdot gh, h^{-1}t) = (z\cdot g, t)\cdot h$, so $\phi$ induces a morphism
  $Z_\tau\times_S G_\tau \to Z_\tau$. One similarly verifies the
  $G$-equivariance of the diagrams that descend to the group action axioms on
  $Z_\tau \times_S G_\tau \to Z_\tau$. For the third statement, note that we
  have a morphism $Z\times_S T \to Z$ compatible with
  $\varphi\colon G_\tau \to G$, namely the first projection $(z,t) \mapsto z$.
  From \Cref{lemma:induced-map-quotient-stacks}, we get a map
  $[Z_\tau/ G_\tau] \to [Z/G]$ is the induced map of quotient
  stacks. This map is in fact as an isomorphism. We obtain $q_\tau$ as the
  composition $Z_\tau \to [Z_\tau/G_\tau] \cong [Z/G]$.

  \ref{item:bitorsor} Follows directly from \ref{item:right-Gt-torsor}.
    
  \ref{item:inverse-torsor} This is a particular instance of the general fact that
  $Z\times_{[Z/G]}Z \cong Z\times_S G$. Indeed, taking $Z = T^{-1}\times_S T$ shows that
  $Z_\tau$ has a section fppf locally, implying that it is the trivial
  $G$-torsor by \Cref{lemma:trivial-G-torsor}.
\end{proof}

In general, the sets $[Z/G]\ideal{S}$ and $Z(S)/G(S)$ are not the same.
Nevertheless, the former is contained in the latter, and the difference is
accounted for by the quotients $Z_\tau(S)/G(S)$ as $\tau$ ranges over $\HH^1(S, G)$.

\begin{theorem}[Descent theory partition]
  \label{thm:descent}
  Then, the \textbf{set} of $S$-points on the quotient stack $[Z/G]$ is
  partitioned by the images of the $S$-points of the twists of
  $q\colon Z \to [Z/G]$.
  \begin{equation*} [Z/G]\ideal{S} = \bigsqcup_{\tau\in \HH^1(S,G)} q_\tau(Z_\tau(S)).
  \end{equation*}
\end{theorem}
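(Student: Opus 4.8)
The plan is to establish the two inclusions ``$\supseteq$'' and ``$\subseteq$'' separately, and then to verify that the union on the right is actually disjoint. Throughout I will think of $S$-points of a quotient stack concretely via Definition~\ref{def:quotient-stack}: an element of $[Z/G]\ideal{S}$ is an isomorphism class of pairs $(T, \phi)$ where $T \to S$ is a right fppf $G$-torsor and $\phi \colon T \to Z$ is a $G$-equivariant $S$-morphism. The key bookkeeping device is that, by the last assumption in \Cref{situation:geometric-operations-on-torsors} together with \cite[Theorem 6.5.10]{Poonen2017}, the isomorphism class of $T$ is classified by a well-defined element $\tau = [T] \in \HH^1(S,G)$ (after converting the right torsor $T$ to a left torsor via $t \cdot g := g^{-1} \cdot t$, matching the handedness conventions of \Cref{lemma:twisting}).

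For the inclusion ``$\supseteq$'': fix $\tau \in \HH^1(S,G)$ and a point $P \in Z_\tau(S)$. By \Cref{lemma:twisting}\ref{item:right-Gt-torsor}, $Z_\tau$ is a right $G_\tau$-torsor over $S$ carrying a canonical map $q_\tau \colon Z_\tau \to [Z/G]$ obtained from the isomorphism $[Z_\tau/G_\tau] \cong [Z/G]$; hence $q_\tau(P) \in [Z/G]\ideal{S}$, giving the containment of each piece. For the inclusion ``$\subseteq$'': let $(T,\phi)$ represent a class in $[Z/G]\ideal{S}$ and set $\tau := [T]$. Twisting by $\tau$, \Cref{lemma:twisting}\ref{item:inverse-torsor} gives $T^{-1} \conprod{G} T \cong G$ (the trivial torsor), so forming the contracted product of $(T,\phi)$ with $T^{-1}$ transports $(T,\phi)$ to a point of $[Z/G](S)$ whose underlying $G_\tau$-torsor is trivial; concretely, $\phi \conprod{G} T^{-1}$ defines a $G_\tau$-equivariant map from the trivial $G_\tau$-torsor, i.e.\ an honest $S$-point $P \in Z_\tau(S)$, and unwinding the isomorphism $[Z_\tau/G_\tau] \cong [Z/G]$ shows $q_\tau(P)$ equals the original class. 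I would phrase this symmetrically as: the contracted-product operation $(-)\conprod{G} T$ and $(-)\conprod{G} T^{-1}$ are mutually inverse bijections between the fiber of $[Z/G]\ideal{S} \to \HH^1(S,G)$ (sending $(T',\phi')$ to $[T']$, well-defined since $[Z/G] \cong [Z_\tau/G_\tau]$) over $\tau$ and the image $q_\tau(Z_\tau(S))$.

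Disjointness is then nearly automatic: the assignment $(T,\phi) \mapsto [T] \in \HH^1(S,G)$ is a well-defined map on isomorphism classes, and for $P \in Z_\tau(S)$ the point $q_\tau(P)$ has underlying torsor class exactly $\tau$ (since $Z_\tau$ is the $G_\tau$-torsor whose associated class, transported back along $[Z_\tau/G_\tau]\cong[Z/G]$, is $\tau$ — this is the content of \Cref{lemma:twisting}\ref{item:bitorsor},\ref{item:right-Gt-torsor}). Hence the sets $q_\tau(Z_\tau(S))$ for distinct $\tau$ live in distinct fibers of this map and cannot overlap.

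The main obstacle I anticipate is the careful tracking of handedness and of the identification $[Z/G] \cong [Z_\tau/G_\tau]$ when checking that the two maps $(T,\phi) \mapsto P$ and $P \mapsto q_\tau(P)$ really are inverse to one another on the nose (not merely up to isomorphism of torsors), since the definition of the contracted product mixes left and right actions and $G$ is possibly non-commutative. Making the bijection at the level of groupoids precise — in particular checking that a $G$-equivariant map out of the trivial $G_\tau$-torsor is the same data as a section, functorially in $S$-schemes — is the one place where one must be genuinely careful rather than formal; everything else is a direct application of \Cref{lemma:twisting} and \Cref{lemma:induced-map-quotient-stacks}.
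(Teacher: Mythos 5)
Your proposal is correct and follows essentially the same route as the paper: both define the evaluation map $(T,\phi)\mapsto [T]\in \HH^1(S,G)$ to index (and separate) the pieces, and both use the triviality of $T^{-1}\conprod{G}T$ from \Cref{lemma:twisting}\ref{item:inverse-torsor} to produce the section of $Z_\tau$ through which the point factors. The paper merely packages your two inclusions and disjointness into a single factorization diagram (\Cref{fig:descent}), so no substantive difference.
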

\begin{proof}
  Recall that a map $S \to [Z/G]$ is the data of a triple $(S,T^{-1},\phi)$
  where $T^{-1}$ is a right fppf $G$-torsor over $S$, and
  $\phi\colon T^{-1} \to Z$ is a $G$-equivariant map of $S$-schemes. We want to
  show that every map $(T^{-1},\phi)\colon S \to [Z/G]$ factors through a twist
  $q_\tau\colon Z_\tau \to [Z/G]$ of the canonical quotient
  $q\colon Z \to [Z/G]$, where $\tau$ is completely determined by the
  isomorphism class of the point $(T,\phi)$. Indeed, in this setting, we have
  the \emph{evaluation map}
  $\zeta\colon (T^{-1},\phi) \mapsto \tau \colonequals [T\to S]$ from
  $[Z/G]\ideal{S}$ to $\HH^1(S,G)$, where $\tau$ is the cohomology class
  corresponding to the left $G$-torsor $T \to S$. Since $T^{-1}\conprod{G} T$
  is isomorphic to the trivial $G$-torsor, we have a section
  $e\colon S \to G \cong T^{-1}\conprod{G} T$ that realizes the factorization of our
  map $(T^{-1},\phi)$ by the commutativity of the diagram in
  \Cref{fig:descent}. The map $T^{-1}\conprod{G}T \to Z_\tau$ is the one
  induced by the $G$-equivariant $S$-morphism
  $\phi\times_S \id_T\colon T^{-1}\times_S T \to Z\times_S T$.
  \begin{figure}[ht]
    \centering
    \begin{tikzcd}
      T^{-1}\conprod{G}T \arrow[rr,"\mathrm{\phi\conprod{G} id_T}"] & & Z_\tau \arrow[dd, gray, "q_\tau"]    \\
      T^{-1} \arrow[d] \arrow[r, "\phi"] & Z \arrow[d, gray, "q"] &  \\
      S \arrow[uu, dashed, bend left = 50, "e"] \arrow[r, gray] & {\color{gray}
        [Z/G]} \arrow[r, equal]& {\color{gray} [Z_\tau/G_\tau]}
        \end{tikzcd}
        \caption{Proof of the method of descent.}
        \label{fig:descent}
      \end{figure}
    \end{proof}

    As a reality check, let us calculate the set of $R$-points on the
    projective line over a principal ideal domain $R$ using \Cref{thm:descent}.
    \begin{example}[PID points on the projective line]
      \label{ex:PID-points-P1} Recall the greatest common divisor of two
      elements $s,t$ in $R$ is a generator of the ideal $sR + tR$. Let
      $\mcV \colonequals \A^2-\mathbf{0}$, so that
      $\mcV(R) \cong \brk{(s,t)\in R^2 : sR + tR = R}$. We have that
      $\Pone(R) \cong \brk{(s,t) \in R^2 : sR + tR = R}/R^\times$. One can see
      this using the fact that $\Pone$ is the quotient stack $[\mcV / \Gm]$.
      Indeed, since $\Pic R$ is trivial,
      \[\Pone(R) = [\mcV/\Gm] \ideal{R} = \bigsqcup_{\tau\in\HH^1(R,\Gm)}
    \mcV_\tau(R)/\Gm(R) = \mcV(R)/R^\times.\] 

  Indeed, a point $Q \in \Pone(R)$ is (isomorphic to a) cartesian square
  \begin{equation*}
    \begin{tikzcd} \Gm \arrow[r, "\phi"] \arrow[d] & \mcV
      \arrow[d] \\ \Spec R \arrow[r] & \Pone,
        \end{tikzcd}
      \end{equation*}
      where $\phi$ is a $\Gm$-equivariant map. Composing the identity section
      $e\colon \Spec R \to \Gm$ with $\phi$ we obtain a point in $\mcV(R)$,
      i.e., a pair $(s,t) \in R^2$ such that $sR + tR = R$. Any other
      isomorphic square comes from a $\Gm$-equivariant map
      $\phi'\colon \Gm \to \mcV$ giving rise to a point $(s',t')$ such that
      $(s',t') = (us, ut)$ for some $u \in R^\times$.
\end{example}

\section{Root stacks and the Belyi stack}
\label{sec:root-stacks-belyi}

\subsection{Review of the root stack construction}
\label{sec:review-root-stack}
An \cdef{effective Cartier divisor} on a scheme $X$ is a closed
subscheme $D \subset X$ such that the corresponding ideal sheaf \(\OO_X(-D)\)
is a line bundle \spcite{01WR}. Equivalently, a closed subscheme is an
effective Cartier divisor if and only if it is locally cut out by a single
element which is a nonzero divisor \spcite{01WS}. Denote by $j_D\colon \OO_X(-D)
\hookrightarrow \OO_X$ the natural inclusion morphism of $\OO_X$-modules.

\begin{definition}[{\cite[Definition 10.3.2]{Olsson16}}]
  A \cdef{generalized effective Car-tier divisor} on a scheme $X$ is a pair
  $(\mathcal{L},\rho)$, where $\mathcal{L}$ is a line bundle on $X$, and
  $\rho\colon \mathcal{L} \to \OO_X$ is a morphism of $\OO_X$-modules. An
  \cdef{isomorphism of generalized Cartier divisors}
  $(\mathcal{L}', \rho') \cong (\mathcal{L},\rho)$ is an isomorphism of line
  bundles $\sigma\colon \mathcal{L}' \to \mathcal{L}$ such that the following
  triangle commutes
  \begin{equation*}
      \begin{tikzcd}
        \mathcal{L}' \ar[rr, "\sigma"] \ar[dr,"\rho'"'] & & \mathcal{L} \ar[dl, "\rho"]\\
        & \OO_X & .
      \end{tikzcd}
    \end{equation*}
    We can multiply generalized effective Cartier divisors $(\mathcal{L},\rho)$
    and $(\mathcal{L}',\rho')$ by declaring
    $(\mathcal{L},\rho)\cdot (\mathcal{L}',\rho') \colonequals
    (\mathcal{L}\otimes_{\OO_X}\mathcal{L}',\rho\otimes\rho')$, where
    $\rho\otimes \rho'$ is the morphism of $\OO_X$-modules given by the
    composition
    \[\mathcal{L}\otimes_{\OO_X}\mathcal{L}' \to \OO_X \otimes_{\OO_X}\OO_X
    \cong \OO_X.\]
\end{definition}

\begin{example}[Effective Cartier divisors]
  \label{example:effective-cartier-divisors}
  Given an effective Cartier divisor $D \subset X$, the pair $(\OO_X(-D), j_D)$
  is a generalized effective Cartier divisor. By definition, two effective
  Cartier divisors $D', D \subset X$ are isomorphic as generalized effective
  Cartier divisors if and only if they are equal and the isomorphism is
  therefore unique.
\end{example}

\begin{example}[Generalized effective Cartier divisors on affine schemes]
    \label{ex:GECD-affine-schemes}
    In light of the equivalence between $R$-modules and quasicoherent
    $\OO_X$-modules on $X = \Spec R$, a generalized effective Cartier divisor
    on an affine scheme is of the from $(\widetilde{M},\widetilde{\lambda})$
    for a projective $R$-module $M$ of rank one, and a morphism
    $\lambda\colon M \to R$ of $R$-modules. In particular, $\lambda(M)$ is an ideal
    in $R$. Two generalized effective Cartier divisors $(M',\lambda')$ and
    $(M,\lambda)$ on $\Spec R$ are isomorphic if and only if there exists an
    $R$-module isomorphism $\sigma\colon M' \to M$ such that
    $\lambda' = \lambda\circ \sigma$. In particular, note that such a pair
    gives rise to the same ideal
    $\lambda'(M') = \lambda(\sigma(M')) = \lambda(M)$.
  \end{example}

\begin{definition}[Root stack]
  \label{def:root-stack}
  Fix an effective Cartier divisor $D$ on a scheme
  $X$, and a positive integer $r$. Let $\sqrt[r]{X;D}$ be the
  fibered category over $X_\fppf$ with:

  \medskip
  \noindent \textbf{Objects:} triples $(f\colon T \to X, (\mathcal{M},\lambda), \sigma)$
  where $f\colon T \to X$ is an $X$-scheme, $(\mathcal{M},\lambda)$ is a
  generalized effective Cartier divisor on $T$, and $\sigma\colon
  (\mathcal{M}^{\otimes r},\lambda^{\otimes r}) \to (f^*\OO_X(-D),f^*j_D)$
  is an isomorphism of generalized effective Cartier divisors on $T$.

  \medskip
  \noindent \textbf{Morphisms:} a morphism
  \[(f'\colon T' \to X, (\mathcal{M}',\lambda'), \sigma') \to (f\colon T \to X,
  (\mathcal{M},\lambda), \sigma)\] is the data of a pair $(h, h^\flat)$ where
  $h\colon T' \to T$ is an $X$-morphism, and
  $h^\flat\colon (\mathcal{M}',\lambda') \to (h^*\mathcal{M}, h^*\lambda)$ is
  an isomorphism of generalized effective Cartier divisors on $T'$ such that
  the following diagram commutes
  \begin{equation*}
    \begin{tikzcd}
      \mathcal{M}'^{\otimes r} \ar[r,"h^{\flat \otimes r}"] \ar[d, "\sigma'"'] & h^*\mathcal{M}^{\otimes r} \ar[d,"h^*\sigma"] \\
      (f')^*\OO_X(-D) \ar[r,"\sim"] & h^*f^*\OO_X(-D).
    \end{tikzcd}
  \end{equation*}
\end{definition}

\begin{remark}[Points on a root stack]
   \label{remark:points-on-root-stack}
   Usually, the base scheme $X$ is itself defined over a different base scheme
   $S$. If $\XX = \sqrt[r]{X;D}$, it is common to abuse notation and write
   $\XX(S)$. What we mean is that we are considering $\XX$ as a stack over
   $S$ via the forgetful map $X_\fppf \to S_\fppf$. In particular, it
   follows that the groupoid $\XX(S)$ is the disjoint union over
   $x \in \mathrm{Hom}_S(S, X) = X(S)$ of the groupoids $\XX(x)$.
\end{remark}

\begin{remark}[Rooting a scheme at an effective Cartier divisor]
  We are interested in the special case in which we root a scheme at a good old
  effective Cartier divisor $D$. We abbreviate $\sqrt[r]{X;(\OO_X(-D),j_D)}$ by
  $\sqrt[r]{X;D}$. In particular, given an $X$-scheme $f\colon T\to X$, the
  groupoid $\sqrt[r]{X;D}(f)$ consists of:

  \medskip
  \noindent \textbf{Objects:} triples
  $(f\colon T \to X ,(\mathcal{M},\lambda), \sigma)$ where
  $(\mathcal{M},\lambda)$ is a generalized effective Cartier divisor on $T$,
  and
  $\sigma\colon (\mathcal{M}^{\otimes r},\lambda^{\otimes r}) \to
  (f^*\OO_X(-D),f^*j_D)$ is an isomorphism of generalized effective Cartier
  divisors on $T$.

  \medskip
  \noindent\textbf{Isomorphisms:}
  $(f\colon T \to X, (\mathcal{M}',\lambda'), \sigma') \to (f\colon T \to X,
  (\mathcal{M},\lambda), \sigma)$ consist of pairs $(h, h^\flat)$ where
  $h \in \Aut(T)$ satisfies $f = f\circ h$, and
  $h^\flat\colon (\mathcal{M}',\lambda') \to (h^*\mathcal{M}, h^*\lambda)$ is
  an isomorphism of generalized effective Cartier divisors on $T$ such that the
  following diagram commutes
\begin{equation*}
  \begin{tikzcd} \mathcal{M}'^{\otimes r} \ar[r,"h^{\flat \otimes r}"] \ar[d,
"\sigma'"'] & h^*\mathcal{M}^{\otimes r} \ar[d,"h^*\sigma"] \\ (f)^*\OO_X(-D)
\ar[r,"\sim"] & h^*f^*\OO_X(-D).
  \end{tikzcd}
\end{equation*}
\end{remark}

Finally, we arrive at the main definition of this section.
\begin{definition}[Iterated root stack]
  \label{def:iterated-root-stack}
  Let $X$ be a scheme. Take a finite list $P_1, \dots, D_r$ of effective
  Cartier divisors on $X$, and let $n_1, \dots, n_r$ be positive integers. The
  \cdef{iterated root stack} of $X$ at the divisors $P_1, \dots, D_r$ with
  multiplicities $n_1, \dots, n_r$ is the fiber product
  \begin{equation}
    \label{eq:iterated-root-stack}
    \sqrt[\leftroot{2} \uproot{5} n_1]{X;P_1} \times_X \cdots \times_X
    \sqrt[\leftroot{2} \uproot{5} n_r]{X;D_r} \to X.
  \end{equation}
\end{definition}

\subsection{The projective line rooted at a point}
\label{sec:proj-line-rooted-at-pt}
Our first concrete non trivial example of a root stack is the projective line
rooted at a single point $\XX \colonequals \sqrt[n]{\Pone; P}$.

\begin{definition}
  \label{def:IPQ}
  Let $R$ be a principal ideal domain, and choose
  $P = (c:d)$ and $Q = (a:b)$ in $\Pone(R)$. Define the \cdef{intersection
    ideal of $P$ with $Q$} as  $I(P,Q) \colonequals (ad-bc)R \subset R$.
\end{definition}

The ideal $I(P,Q)$ cuts out the locus in $\Spec R$ over which $P$ and $Q$
intersect. Indeed, the pullback of the diagonal $\Pone \to \Pone\times\Pone$ by
$(P,Q)\colon \Spec R \to \Pone\times\Pone$ gives the closed subscheme
$\Spec R/I(P,Q)$. From the magic square, $I(P,Q)$ can equivalently be defined
by the cartesian square
\begin{equation}
  \label{eq:I(P,Q)-cartesian}
  \begin{tikzcd}
    \Spec R/I(P,Q) \arrow[r] \arrow[d] & \Spec R \arrow[d, hook, "Q"]\\
    \Spec R \arrow[r, hook, "P"'] & \Pone_R \, .
  \end{tikzcd}
\end{equation}

\begin{warning}
  The pullback $P^*\OO_{\Pone}(-Q)$ does not coincide with the sheaf
  corresponding to $I(P,Q)$. More generally, the pulback of a quasicoherent
  ideal sheaf need not coincide with the ideal sheaf of the pulled back closed
  subscheme. 
  Nevertheless, we have the following commutative diagram of sheaves on
  $\Spec R$ with exact rows
  \begin{equation}
    \label{eq:diagarm-P^*O(-Q)}
    \begin{tikzcd}
      & P^*\OO_{\Pone}(-Q) \arrow[r] \arrow[d] \arrow[dr, "\widetilde\lambda"] & P^*\OO_{\Pone} \arrow[r]
      \arrow[d, equals] &
      P^*Q_*\widetilde R \arrow[r] \arrow[d, equals] & 0 \\
      0 \arrow[r] & \widetilde{I(P,Q)} \arrow[r] & \widetilde R \arrow[r] &
      \widetilde{R/I(P,Q)} \arrow[r] & 0 \, .
    \end{tikzcd}
  \end{equation}
\end{warning}

\begin{proposition}
    \label{prop:P1-rooted-at-P}
    Let $R$ be a principal ideal domain with fraction field $K$. Let
    $\Pone = \Proj R[\s,\t]$. Fix a point $P \in \Pone(R)$, and a positive
    integer $n$. Let $\XX \colonequals \sqrt[n]{\Pone; P}$ be the $n^\th$ root
    stack of $\Pone$ at $P$, defined over $\Spec R$. Then,
    $$\XX(R) = \bigsqcup_{Q \in \Pone(R)} \XX(Q),$$ where
    \begin{enumerate}[label=(\roman*), leftmargin=*]
    \item The fiber $\XX(P)$ contains one object up to isomorphism, with
      automorphism group isomorphic to
      $\mu_n(R) = \brk{u \in R^\times : u^n =1}$.
    \item For $Q \neq P$ the ideal $I(P,Q)$ is nonzero, and the fiber $\XX(Q)$
      contains one object with trivial automorphism group if and only if
      $I(P,Q) = J^n$ for some ideal $ 0 \neq J \subsetneq R$, and is empty
      otherwise.
    \end{enumerate}
    In particular, when $R = K$, we have that $\XX\ideal{K} \cong \Pone(K)$.
\end{proposition}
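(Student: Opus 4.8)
The plan is to unwind \Cref{def:root-stack} one coarse point at a time and reduce the question to an elementary computation with ideals of the principal ideal domain $R$, using that $\Pic R = 0$.

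First I would apply \Cref{remark:points-on-root-stack} to obtain the disjoint union $\XX(R) = \bigsqcup_{Q \in \Pone(R)} \XX(Q)$, where $\XX(Q)$ is the groupoid whose objects are pairs $\bigl((\mathcal{M},\lambda),\sigma\bigr)$ consisting of a generalized effective Cartier divisor $(\mathcal{M},\lambda)$ on $\Spec R$ and an isomorphism of generalized effective Cartier divisors $\sigma\colon (\mathcal{M}^{\otimes n},\lambda^{\otimes n}) \xrightarrow{\sim} (Q^*\OO_{\Pone}(-P), Q^*j_P)$, a morphism being an isomorphism of the underlying generalized Cartier divisors compatible with $\sigma$ (no nontrivial automorphism of the base enters, as the section $Q$ is a monomorphism). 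Since $R$ is a principal ideal domain, every line bundle on $\Spec R$ is free of rank one, so by \Cref{ex:GECD-affine-schemes} a generalized effective Cartier divisor on $\Spec R$ is, up to isomorphism, multiplication by an element $\ell \in R$ well defined up to a unit, and its group of self-isomorphisms is trivial when $\ell \neq 0$ and is $R^\times$ when $\ell = 0$.

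The crux is to identify the target $(Q^*\OO_{\Pone}(-P), Q^*j_P)$. Here I would use that $\OO_{\Pone}(-P)\hookrightarrow\OO_{\Pone}$ is the ideal-sheaf inclusion of the section $P$, so that right-exactness of $Q^*$ together with the defining cartesian square \Cref{eq:I(P,Q)-cartesian} — which identifies $Q^*\!\left(\OO_{\Pone}/\OO_{\Pone}(-P)\right)$ with $\widetilde{R/I(P,Q)}$ — forces the image of $Q^*j_P$ to be exactly the intersection ideal $I(P,Q)$ of \Cref{def:IPQ}; this is the content of diagram \Cref{eq:diagarm-P^*O(-Q)}. As $Q^*\OO_{\Pone}(-P)$ is free of rank one, I may then identify the target with $(\widetilde R, \cdot g)$ for a generator $g$ of $I(P,Q)$, and a short argument using that the coordinates of $P$ and $Q$ are unimodular shows $I(P,Q) = 0$ exactly when $Q = P$, so $g \neq 0$ otherwise. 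With the source trivialized as above, the existence of $\sigma$ becomes the equality of ideals $(\ell R)^{n} = I(P,Q)$; hence $\XX(Q)$ is non-empty if and only if $I(P,Q) = J^{n}$ for an ideal $J = \ell R$, and is empty otherwise. For $Q \neq P$ the element $\ell$ is determined up to a unit and the unit underlying $\sigma$ is then forced, so all objects of $\XX(Q)$ are isomorphic and have trivial automorphism group, giving (ii); for $Q = P$ the target map vanishes, forcing $\lambda = 0$, and the only remaining datum is the isomorphism $\sigma\colon\mathcal{M}^{\otimes n}\xrightarrow{\sim}\widetilde R$, whose compatibility constraint in \Cref{def:root-stack} identifies the automorphism group of each object with $\mu_n(R)$, giving (i). Finally, over $R = K$ a field every nonzero ideal is the unit ideal $(1) = (1)^{n}$, so (ii) applies to every $Q \neq P$ and the corresponding fibres are single points; together with (i) this yields $\XX\ideal{K}\cong\Pone(K)$.

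The only step that is not pure bookkeeping is the identification of the image of $Q^*j_P$ with $I(P,Q)$: the warning immediately preceding the statement is precisely the reminder that $Q^*\OO_{\Pone}(-P)$ is \emph{not} the ideal sheaf of the pulled-back subscheme, so one cannot conclude by naive base change but must instead chase the exact rows of \Cref{eq:diagarm-P^*O(-Q)}. Once that is in place, (i) and (ii), and then the field case, follow by tracking units over $R$.
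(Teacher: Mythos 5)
Your proposal is correct and follows essentially the same route as the paper's proof: decompose $\XX(R)$ over the coarse points via \Cref{remark:points-on-root-stack}, identify the pullback of the generalized Cartier divisor with $(M(P,Q),\lambda(P,Q))$ having image $I(P,Q)$ via \Cref{eq:diagarm-P^*O(-Q)}, trivialize rank-one modules using the PID hypothesis, and split into the cases $Q=P$ (where $\lambda=0$ forces $\mu_n(R)$ automorphisms) and $Q\neq P$ (where injectivity of $\lambda$ forces trivial automorphisms and existence reduces to $I(P,Q)$ being an $n$th power). No substantive differences from the paper's argument.
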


\begin{proof}
  Let $\XX \colonequals \sqrt[n]{\Pone; P}$. As explained in
  \Cref{remark:points-on-root-stack}, the groupoid $\XX(R)$ is the disjoint
  union of the groupoids $\XX(Q)$, ranging over $Q\in \Pone(R)$. We proceed to
  describe each groupoid $\XX(Q)$.

  To start, consider the pullback of the ideal sheaf
  $\OO_{\Pone}(-Q) = \widetilde{I_Q}$ via the map $P\colon \Spec R \to \Pone$,
  where $I_Q = (a\t-b\s)R[\s,\t] \subset R[\s,\t]$. This is a line bundle on
  $\Spec R$ corresponding to a certain free $R$-module of rank one $M(P,Q)$.
  Moreover, the pullback of the
  generalized effective Cartier divisor $j_Q\colon \OO_{\Pone}(-Q) \hookrightarrow \OO_\Pone$ corresponds
  to an $R$-module homomorphism $\lambda(P,Q)\colon M(P,Q) \to R$ with image
  $I(P,Q)$, as illustrated in Diagram~\ref{eq:diagarm-P^*O(-Q)}.
  
  The \textbf{objects} in $\XX(Q)$ are triples $(Q, (M,\lambda), \sigma)$,
  where
    \begin{itemize}[leftmargin=*]
    \item $(M,\lambda)$ is a generalized effective Cartier divisor on $\Spec R$
      (see \Cref{ex:GECD-affine-schemes}). Since $R$ is a principal ideal
      domain, $M$ is a free $R$-module of rank one and $\lambda\colon M \to R$
      is an $R$-module homomorphism.
    \item $\sigma\colon (M^{\otimes n}, \lambda^{\otimes n}) \to (M(P,Q), \lambda(P,Q))$ is an isomorphism of generalized effective Cartier
      divisors on $\Spec R$, that is, a commutative triangle of $R$-modules
      \begin{equation}
      \label{eq:XX(Q)-point}
      \begin{tikzcd}
        M^{\otimes n} \arrow[rr, "\sigma"', "\cong"] \arrow[rd,
        "\lambda^{\otimes n}"'] & & M(P,Q) \arrow[dl,"{\lambda(P,Q)}"] \\
        & R .&
      \end{tikzcd}
    \end{equation}
  \end{itemize}
  By definition, an \textbf{isomorphism}
  $(Q,(M',\lambda'),\sigma') \to (Q, (M,\lambda),\sigma)$ in $\XX(Q)$ is a pair
  $(h, h^\flat)$, where
  \begin{itemize}[leftmargin=*]
    \item $h\colon \Spec R \to \Spec R$ is a morphism over $\Spec R$, so it must be the
      identity.

    \item $h^\flat\colon M' \to M$ is an isomorphism of
      $R$-modules such that $\lambda' = \lambda\circ h^\flat$ and the
      following diagram commutes
    \begin{equation}
    \label{eq:iso-root-stacks}
        \begin{tikzcd}
        M'^{\otimes n} \ar[rd] \ar[rr, "h^{\flat \otimes n}", color=caribbeangreen] & &  M^{\otimes n}\ar[ld] \ar[rrd, "\sigma"] \\
        & R^{\otimes r} \ar[rrrd, gray, "\cong"] & & &  M(P,Q) \ar[from=llllu,
        "\sigma'"', near end, crossing over] \ar[d, gray, "{\lambda(P,Q)}"] \\
        & & & & {\color{gray}R.}
        \end{tikzcd}
      \end{equation}
    \end{itemize}

    \begin{enumerate}[label=(\roman*), leftmargin=*]
    \item When $P = Q$, then $I(P,Q) = 0$ and this forces every map
      $\lambda\colon M \to R$ to be the zero map. In particular, the bottom
      part of diagram (\ref{eq:iso-root-stacks}) imposes no restriction and the
      isomorphisms of $\XX(P)$ are precisely the isomorphisms of $R$-modules
      $h^\flat\colon M' \to M$ such that
    \begin{equation*}
      \begin{tikzcd}
        (M')^{\otimes n} \arrow[rr, "h^{\flat \otimes n}"', "\cong"] \arrow[rd, "\sigma'"'] & & M^{\otimes n} \arrow[ld, "\sigma"] \\
        & M(P,P). &
      \end{tikzcd}
    \end{equation*}
    In particular, any triple $(P,(M,\lambda), \sigma)$ in $\XX(P)$ has
    $\mu_n(R)$ automorphisms.

  \item When $P \neq Q$, the commutativity of (\ref{eq:XX(Q)-point}) requires
    that the nonzero ideal $I(P,Q)$ is the $n^\th$ power of the ideal
    $\lambda(M)$ in $R$. This condition is also sufficient. Indeed, if
    $I(P,Q) = J^n$ for some nonzero ideal $\lambda\colon J \subset R$, then take an
    isomormphism of $R$-modules $\sigma\colon I(P,Q) \to M(P,Q)$ and note that
    \begin{equation}
        \label{eq:Q-canonical-triple}
        (Q, (J, \lambda), \sigma\colon J^n \to M(P,Q))
    \end{equation}
    is an object of $\XX(Q)$, and every object in $\XX(Q)$ is isomorphic to it.
    To calculate the automorphism group of this object, note that the only
    possible isomorphism $h^\flat\colon J \to J$ of $R$-modules such that
    $\lambda = h^\flat\circ \lambda \colon J\hookrightarrow R$, is the
    identity. Thus, the automorphism groups in $\XX(Q)$ are trivial.
\end{enumerate}
\end{proof}

\subsection{The Belyi stack}
\label{sec:belyi-stack}
In this section, we summarize a few geometric and arithmetic properties of the
\belyi stack $\Pone\abc$. This is the stack corresponding to Darmon's $M$-curve
$\mathbf{P}^1_{a,b,c}$ in \cite[p.~4]{Darmon97}.

\begin{situation}
\label{situation:belyi-stack}
    Let
    \begin{itemize}[leftmargin=*]
    \item $(a,b,c) \in \Z^3$ be a triple of positive integers,
    \item $\Pone = \Proj \Z[\s,\t]$, and
    \item $P_0 = V(\s), P_1 = V(\s-\t), P_\infty = V(\t) \in \Div(\Pone_\Z)$.
    \end{itemize}
\end{situation}

\begin{definition}[\belyi stack]
  \label{def:belyi-stack}
  We define the \cdef{\belyi stack} $\Pone\abc$ as the iterated root stack of
  $\Pone_\Z$ at the divisors $P_0, P_1, P_\infty$ with multiplicities $a,b,c$.
  \begin{equation*}
  \Pone\abc \colonequals \left(\sqrt[\leftroot{2} \uproot{5} a]{\Pone; P_0}\right) \times_{\Pone} \left(\sqrt[\leftroot{2} \uproot{5} b]{\Pone; P_1}\right) \times_{\Pone} \left(\sqrt[\leftroot{2} \uproot{5} c]{\Pone; P_\infty}\right).
\end{equation*}
\end{definition}  

We start by summarizing some straightforward geometric properties of the \belyi
stack. See \cite[Definition 11.2.1]{Voight&Zureick-Brown22} and
\cite[Definition 5.2.1]{Voight&Zureick-Brown22} for the definition of a
(relative) stacky curve.
\begin{lemma}
    \label{lemma:properties-belyi-stack}
    The following statements hold.
    \begin{enumerate}[label=(\roman*),leftmargin=*]
    \item \label{item:belyi-stacky-curve} The \belyi stack $\Pone\abc$ is a
      relative stacky curve over $\Z$ with coarse space $\Pone$. The coarse
      space morphism $\Pone\abc \to \Pone$ is an isomorphism over the open set
      $U = \Pone-P_0\cup P_1 \cup P_\infty$.
    \item \label{item:tame-over-R} Let $R = \Z[1/abc]$. Then the base change
      $\Pone\abc_R$ is tame.
    \item \label{item:euler-char} For every geometric point
      $s\colon \Spec k \to \Spec R$, the fiber $\Pone\abc_s$ is a stacky curve
      over $k$. Moreover, the Euler characteristic of
      $\Pone\abc_s$ is
    \begin{equation*}
        \chi(\Pone\abc_s) = \tfrac1a + \tfrac1b + \tfrac1c - 1.
    \end{equation*}
    We define this common value to be the \cdef{Euler characteristic} of
    $\Pone\abc$.
    \end{enumerate}
\end{lemma}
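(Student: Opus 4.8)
The plan is to verify each of the three assertions using the description of the root stack construction reviewed in Section \ref{sec:review-root-stack}, together with standard facts about root stacks being collected (presumably) in the surrounding text and in \cite{Olsson16, Voight&Zureick-Brown22, Cadman07}.

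For \ref{item:belyi-stacky-curve}, I would first recall that a single root stack $\sqrt[r]{X;D}$ over a base is a relative stacky curve when $X$ is a relative curve and $D$ is an effective Cartier divisor étale over the base (here $X = \Pone_\Z$ and $D$ is a section, hence a Cartier divisor with the right properties since $0, 1, \infty$ are distinct $\Z$-points of $\Pone$). Since being a relative stacky curve is local on $X$ and the three divisors $P_0, P_1, P_\infty$ have pairwise empty intersection, the iterated root stack $\Pone\abc$ is glued from the open substacks where at most one root has been taken; each piece is a stacky curve, so $\Pone\abc$ is as well. The coarse space of $\sqrt[r]{X;D}$ is $X$ (a standard property of root stacks — the root stack is an isomorphism away from $D$, and the coarse-space morphism is the identity on $X$), and coarse spaces are compatible with the gluing, so the coarse space of $\Pone\abc$ is $\Pone_\Z$, with the coarse map an isomorphism over $U = \Pone \setminus (P_0 \cup P_1 \cup P_\infty)$.

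For \ref{item:tame-over-R}, recall that a stacky curve is \emph{tame} when the order of each stabilizer group is invertible on the base; by \Cref{prop:P1-rooted-at-P}\,(i), the stabilizer at the $i$-th rooted point of $\sqrt[n_i]{\Pone;P_i}$ is $\mu_{n_i}$, so the stabilizers appearing in $\Pone\abc$ are $\mu_a, \mu_b, \mu_c$ (and the trivial group elsewhere). After base change to $R = \Z[1/abc]$, each of $a, b, c$ is invertible, so $\mu_a, \mu_b, \mu_c$ are étale over $R$ and the stacky curve $\Pone\abc_R$ is tame. For \ref{item:euler-char}, I would fix a geometric point $s\colon \Spec k \to \Spec R$; then $\Pone\abc_s$ is the iterated root of $\Pone_k$ at three distinct $k$-points with multiplicities $a,b,c$, which is a stacky curve over $k$ with coarse space $\Pone_k$ of genus $0$ and three stacky points of orders $a,b,c$. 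Applying the Riemann–Hurwitz / stacky Euler characteristic formula for stacky curves (as in \cite[Section 5.3 or Proposition 5.5.6]{Voight&Zureick-Brown22}) gives
\[
\chi(\Pone\abc_s) = \chi(\Pone_k) - \sum_{i}\left(1 - \tfrac{1}{n_i}\right) = 2 - \left(1 - \tfrac1a\right) - \left(1 - \tfrac1b\right) - \left(1 - \tfrac1c\right) = \tfrac1a + \tfrac1b + \tfrac1c - 1,
\]
which is independent of $s$, justifying the definition of the common value as $\chi(\Pone\abc)$.

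The only mildly delicate point — and the step I expect to require the most care — is the passage between the local models $\sqrt[n_i]{\Pone;P_i}$ and the global iterated root stack $\Pone\abc$: one must check that the fiber product \Cref{eq:iterated-root-stack} genuinely glues to a stacky curve and that its coarse space, stabilizer groups, and fiberwise Euler characteristic are computed by looking at one rooted point at a time. This is legitimate precisely because the $P_i$ are pairwise disjoint, so the three root-stack operations are "supported" on disjoint opens and do not interact; I would make this explicit rather than leave it implicit, citing \cite[Section 10.3]{Olsson16} for the behavior of root stacks and \cite[Section 5.3]{Voight&Zureick-Brown22} for the stacky curve formalism.
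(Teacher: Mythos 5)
The paper states this lemma without proof, presenting it as a summary of ``straightforward geometric properties,'' so there is no argument of the author's to compare against; your proposal is correct and supplies exactly the details one would expect. In particular, the points you identify as the load-bearing ones are the right ones: the sections $P_0,P_1,P_\infty$ of $\Pone_\Z\to\Spec\Z$ are pairwise disjoint (even in every residue characteristic), so the three root constructions live on disjoint opens and the coarse space, stabilizers $\mu_a,\mu_b,\mu_c$, and the fiberwise computation $\chi = 2-\sum_i\paren{1-\tfrac{1}{n_i}} = \tfrac1a+\tfrac1b+\tfrac1c-1$ can all be read off one rooted point at a time, with tameness over $\Z[1/abc]$ following because the stabilizer orders become invertible there.
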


We now turn to the arithmetic of the \belyi stack. We want to understand the
set of $\Z$-points on $\Pone\abc$. The first step is to understand the set of
$\Z$-points of the projective line rooted at a single point.

\begin{lemma}[$R$-points on the \belyi stack]
    \label{lemma:R-points-belyi-stack}
    Let $R$ be a principal ideal domain. Let $\Pone\abc$ be the base extension
    of the \belyi stack to $R$. The set $\Pone\abc\ideal{R}$ is in bijection
    with the subset of $Q = (s:t) \in \Pone(R) = \Pone(k)$ such that
    $Q \in \brk{P_0,P_1,P_\infty}$, or:
    \begin{itemize}[leftmargin=*]
        \item $I(P_0,Q) = sR$ is a $a^\th$ power.
        \item $I(P_1,Q)= (s-t)R$ is a $b^\th$ power.
        \item $I(P_\infty,Q) = tR$ is a $c^\th$ power.
    \end{itemize}
\end{lemma}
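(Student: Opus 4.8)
The plan is to reduce the statement about the iterated root stack $\Pone\abc$ to the single-point case already analyzed in \Cref{prop:P1-rooted-at-P}. The key observation is that $\Pone\abc$ is by definition the fiber product over $\Pone_R$ of the three root stacks $\sqrt[a]{\Pone;P_0}$, $\sqrt[b]{\Pone;P_1}$, and $\sqrt[c]{\Pone;P_\infty}$, so a $Q$-point of $\Pone\abc$ (for $Q \in \Pone(R)$, using that $R$ is a PID so $\Pone(R) = \Pone(k)$ as the fraction field equals $R$ when $R = k$, or more generally using \Cref{remark:points-on-root-stack}) is a triple consisting of a point in each of the three factor groupoids lying over the same $Q$. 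By \Cref{remark:points-on-root-stack}, $\Pone\abc\ideal{R} = \bigsqcup_{Q \in \Pone(R)} \Pone\abc(Q)$, and by the universal property of the fiber product, $\Pone\abc(Q)$ is the (2-)fiber product of the groupoids $\sqrt[a]{\Pone;P_0}(Q)$, $\sqrt[b]{\Pone;P_1}(Q)$, $\sqrt[c]{\Pone;P_\infty}(Q)$ over the point groupoid $\{Q\} = \Pone(Q)$ (which is trivial: one object, one morphism).

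First I would invoke \Cref{prop:P1-rooted-at-P} three times, once for each factor. It tells us precisely when each factor groupoid over $Q$ is nonempty and what its automorphism group is: the factor $\sqrt[a]{\Pone;P_0}(Q)$ is nonempty iff $Q = P_0$ or $I(P_0,Q) = sR$ is an $a^{\text{th}}$ power of an ideal, and similarly for the other two; moreover in the "$Q$ equals the rooted point" case the automorphism group is $\mu_a(R)$ (resp.\ $\mu_b, \mu_c$) and in the other nonempty case the automorphism group is trivial. Then I would combine these: since the fiber product of groupoids over the trivial groupoid is just the product, $\Pone\abc(Q)$ is nonempty iff all three factors are nonempty, which gives exactly the stated list of conditions — $Q \in \{P_0,P_1,P_\infty\}$ is subsumed because if, say, $Q = P_0$ then $I(P_0,Q) = 0$ (the "$P = Q$" case), while automatically $Q \ne P_1, P_\infty$ so $I(P_1,Q)$ and $I(P_\infty,Q)$ are nonzero principal ideals that must be checked to be $b^{\text{th}}$ (resp.\ $c^{\text{th}}$) powers — and conversely if $Q \notin \{P_0,P_1,P_\infty\}$ all three intersection ideals are nonzero and the three power conditions must hold. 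Finally, since $\Pone\abc\ideal{R}$ records isomorphism \emph{classes}, I would note that whenever $\Pone\abc(Q)$ is nonempty it contains exactly one isomorphism class (as a product of groupoids each with a single isomorphism class), so the map $Q \mapsto [\text{the point over }Q]$ is the desired bijection onto the described subset of $\Pone(R)$.

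The only mildly delicate point — and the step I expect to require the most care — is bookkeeping the "$Q \in \{P_0,P_1,P_\infty\}$" cases against the three power conditions, to be sure the description in the lemma is literally correct and not merely morally so. For instance one must observe that the three divisors $P_0, P_1, P_\infty$ are pairwise disjoint over $\Spec R$ (their pairwise intersection ideals are $R$ itself, since $s, s-t, t$ generate the unit ideal pairwise in $R[\s,\t]$ up to the relevant $2\times 2$ minors being units), so at most one of the three "$I = 0$" cases can occur, and when it does the other two conditions are genuine constraints; this is exactly what makes the union in the lemma statement disjoint and well-posed. I would also remark in passing that the automorphism groups multiply, so the point over $Q$ has automorphism group $\mu_a(R) \times \mu_b(R) \times \mu_c(R)$ when $Q \in \{P_0,P_1,P_\infty\}$ (with exactly one factor being the "stacky" one and the others trivial since $Q$ avoids the other two points) and trivial automorphism group otherwise — though this refinement is not needed for the bijection on isomorphism classes and can be left to the reader or to a subsequent remark.
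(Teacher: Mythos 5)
Your proposal is correct and follows essentially the same route as the paper: the paper likewise observes that $\Pone\abc\ideal{R}$ is the fiber product over $\Pone(R)$ of the three sets $\bigl(\sqrt[n]{\Pone;P}\bigr)\ideal{R}$ and then cites \Cref{prop:P1-rooted-at-P} for each factor. Your write-up is in fact more careful than the paper's two-line proof about the case bookkeeping when $Q \in \brk{P_0,P_1,P_\infty}$ (where the remaining intersection ideals equal the unit ideal and the power conditions hold automatically), which is a welcome addition rather than a divergence.
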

\begin{proof}
  Let $\XX$ denote the Belyi stack. As with any fiber product of groupoids (see
  \cite[Section 3.4.9]{Olsson16}), $\XX\ideal{R}$ is the fiber product of sets
  $$\left(\sqrt[a]{\Pone;P_0}\right)\ideal{R} \times_{\Pone(R)}
  \left(\sqrt[b]{\Pone;P_1}\right)\ideal{R} \times_{\Pone(R)}
  \left(\sqrt[c]{\Pone;P_\infty}\right)\ideal{R},$$ so the result follows from the
  description of the $R$-points of the $n^\th$ root stack of the projective
  line at a given point $P$ given in \Cref{prop:P1-rooted-at-P}.
\end{proof}

As Darmon observed in \cite[p.~5]{Darmon97}, the integral points on the \belyi
stack $\Pone\abc$ correspond to primitive integral solutions to generalized
Fermat equations of signature $\abc$, up to some \emph{sloppiness} in the signs
(i.e., $A,B,C \in \brk{\pm 1} = \Z^\times$). If we consider
$\Z[\mcS^{-1}]$-points instead, the same is true but allowing the coefficients
$A, B, C \in \Z[\mcS^{-1}]^\times$.
\begin{lemma}
  Let $\mcS$ be a finite (possibly empty) set of primes, and let
  $R = \Z[\mcS^{-1}]$. Then, every point in $Q = \Pone\abc\ideal{R}$ arises
  from a primitive integral solution to a generalized Fermat equation
  $F\colon \gfe = 0$, where $A\cdot B\cdot C \in R^\times$ , as $j(x,y,z) = Q$.
\end{lemma}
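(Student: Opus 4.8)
The plan is to unwind the explicit description of $\Pone\abc\ideal{R}$ provided by \Cref{lemma:R-points-belyi-stack} and match it, term by term, with the data of a primitive $\mcS$-integral solution to a generalized Fermat equation. Start with a point $Q = (s:t) \in \Pone(R)$ lying in $\Pone\abc\ideal{R}$. First I would dispose of the three \emph{stacky} cases $Q \in \brk{P_0, P_1, P_\infty}$ by hand: for instance, $Q = P_0 = (0:1)$ corresponds to the solution $(0,1,1)$ (with coefficients $A$ arbitrary and $B = -C$ a unit, or any convenient normalization making $B\y^b + C\z^c = 0$ at $(0,1,1)$ vacuous after inverting $\mcS$), and similarly $P_1, P_\infty$ give $(1,0,1)$-type and $(1,1,0)$-type solutions. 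In each of these cases the map $j(x,y,z) = (-Ax^a : Cz^c)$ visibly lands on the corresponding coordinate point of $\Pone$.

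Next, for $Q \notin \brk{P_0,P_1,P_\infty}$, \Cref{lemma:R-points-belyi-stack} tells us that $sR = \mfa^a$, $(s-t)R = \mfb^b$, and $tR = \mfc^c$ for ideals $\mfa, \mfb, \mfc$ of $R$ (necessarily principal, since $R$ is a PID — here I use that $R = \Z[\mcS^{-1}]$ has trivial class group). Choose generators: write $s = A' x^a$, $t = C' z^c$, and $s - t = -B' y^b$ for suitable $x, y, z \in R$ and units $A', B', C' \in R^\times$; the units are unavoidable because a generator of a principal ideal is only well-defined up to $R^\times$. Rearranging $s - t = -B'y^b$ together with $s = A'x^a$ and $t = C'z^c$ gives
\begin{equation*}
  A' x^a + B' y^b - C' z^c = 0,
\end{equation*}
which, after absorbing the sign of $C'$, is exactly a generalized Fermat equation $F\colon A\x^a + B\y^b + C\z^c = 0$ with $A, B, C \in R^\times$ as required. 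The triple $(x,y,z)$ is \textbf{primitive}: any common prime divisor (of $R$, i.e., a prime not in $\mcS$) of $x, y, z$ would divide $s$, $t$, hence $\gcd(s,t)R$, contradicting $Q \in \Pone(R)$ (which forces $sR + tR = R$ by \Cref{ex:PID-points-P1}). Finally, $j(x,y,z) = (-Ax^a : Cz^c) = (s' : t')$ where $(s':t')$ is a unit rescaling of $(s:t)$, hence equals $Q$ in $\Pone(R)$; one checks the signs and units line up so that $-A x^a$ is a unit multiple of $s$ and $C z^c$ the same unit multiple of $t$, using that we absorbed exactly the right units into $A, B, C$.

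\textbf{Main obstacle.} The genuinely delicate point is bookkeeping the units: a priori the generators $x, y, z$ are chosen independently, so one gets three unrelated units $A', B', C'$, and it is not automatic that the \emph{same} unit simultaneously rescales the first and second coordinates of $j(x,y,z)$ to $(s:t)$. The resolution is that in $\Pone(R)$ we are free to rescale the representative $(s:t)$ by any element of $R^\times$, and the equation $s - t = -B'y^b$ is a genuine equality in $R$ (not just up to units) once $x, y, z$ are fixed — so the three units are constrained by one linear relation, leaving exactly the freedom $A, B, C \in R^\times$ with the projective rescaling eating the rest. I would also need to double-check the edge behavior when one of $x, y, z$ is a unit or zero (so that the corresponding ideal is $R$ or $0$), but these collapse into the stacky-point cases already handled, or are harmless. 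Modulo this unit-chasing, the proof is a direct translation of \Cref{lemma:R-points-belyi-stack}.
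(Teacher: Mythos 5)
Your proposal is correct and follows essentially the same route as the paper: apply \Cref{lemma:R-points-belyi-stack}, use that $R$ is a PID to write $s$, $s-t$, $t$ as unit multiples of $a$th, $b$th, $c$th powers, and read off the Fermat equation from the identity $-s+(s-t)+t=0$, with primitivity coming from $sR+tR=R$. The ``unit bookkeeping'' you flag as the main obstacle is in fact a non-issue, exactly for the reason you give: once generators are fixed, $s = A'x^a$, $t = C'z^c$, $s-t=-B'y^b$ are genuine equalities in $R$, so the relation and the identification $j(x,y,z)=Q$ hold on the nose.
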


\begin{proof}
  Explicitly, an object in the groupoid $\XX(Q)$ is a triple
  \begin{equation*}
    (Q,\left[(M_0, \lambda_0),(M_1, \lambda_1),(M_\infty,
      \lambda_\infty)\right], (\sigma_0, \sigma_1,\sigma_\infty))
  \end{equation*}
  where $(M_0, \lambda_0),(M_1, \lambda_1),(M_\infty, \lambda_\infty)$ are
  generalized effective Cartier divisors on $\Spec R$, and $\sigma_0,
  \sigma_1,\sigma_\infty$ are isomorphisms of generalized effective Cartier
  divisors on $\Spec R$
  \begin{align*}
    \sigma_0\colon (M_0^{\otimes a},\lambda_0^{\otimes a}) &\to
                                                             Q^*(\OO_{\Pone}(-P_0), j_0) = (M(P_0,Q), \lambda(P_0, Q)),\\
    \sigma_1\colon (M_1^{\otimes b},\lambda_1^{\otimes b}) &\to
                                                             Q^*(\OO_{\Pone}(-P_1),j_1)
                                                             = (M(P_1,Q), \lambda(P_1, Q)),
    \\
    \sigma_\infty\colon (M_\infty^{\otimes c},\lambda_\infty^{\otimes c}) &\to
                                                                            Q^*(\OO_{\Pone}(-P_\infty),j_\infty) = (M(P_\infty,Q), \lambda(P_\infty, Q)).
  \end{align*}
  If $Q = (s:t)$, it follows from the assumption that $R$ is a principal ideal
  domain and \Cref{lemma:R-points-belyi-stack} that $s = -A\cdot x^a$, $t = C\cdot z^c,$ and $s-t = B\cdot y^b$ for
  some $A,B,C \in R^\times$ and $x,y,z\in R$. Since $-s + (s-t) + t = 0$, this
  implies that $Ax^a + By^c + Cz^c = 0$. Moreover, since $sR + tR = R$, we also
  have that $x^aR+z^cR = R$.
\end{proof}

\subsection{Triangle groups and \belyi maps}
\label{sec:triangle-groups-belyi-maps}

\begin{situation} \hfill
  \begin{itemize}[leftmargin=*]
  \item $k$ denotes a perfect field.
  \item $K$ denotes a number field, with ring of integers $\OO_K$.
  \item For any prime $\mfp \in \Spec \OO_K$, let $K_\mfp$ be the $\mfp$-adic
    completion of $K$, and let $\kk(\mfp)$ be the corresponding
    residue field.
  \item $Z_k$ denotes a \cdef{nice} (smooth, projective, geometrically
    integral) \cdef{curve} (separated scheme of finite type over a field),
    defined over $k$.
  \item $\phi\colon Z_k \to \Pone_k$ will denote a $k$-morphism.
  \end{itemize}
\end{situation}

The fundamental group of the thrice-punctured Riemann sphere
$\C\Pone - \brk{0,1,\infty}$ is the free group on three generators; these generators
are represented by loops $\gamma_0, \gamma_1, \gamma_\infty$ going around the punctures.
Introducing the stackyness imposes the relations
$$\gamma_0^a = \gamma_1^b = \gamma_\infty^c = \gamma_0\gamma_1\gamma_\infty =
1$$ on the generators. This is the fundamental group of the \belyi orbifold
$\Pone\abc(\C)$. The abstract group defined by these generators and relations
is the \cdef{triangle group} $\tribar\abc$. For more on this topic see \cite[Section 2]{Clark&Voight19}, \cite[Chapter
II]{Magnus74}. (More generally, the fundamental groups of any orbifold curve can be
calculated via van Kampen's theorem \cite[Proposition 5.6]{Behrend&Noohi06}.)

\begin{definition}
	\label{def:belyi-map}
	Let $Z_k$ be a nice curve defined over a perfect field $k$. A
    \cdef{$k$-\belyi map} is a finite $k$-morphism
    $\phi\colon Z_k \to \Pone_k$ that is unramified outside
    $\brk{0, 1, \infty} \subset \Pone(k)$.
  \end{definition}

  \begin{remark}
    These remarkable covers of the projective line are named after the
    Ukrainian mathematician G. V. \belyi, who famously proved that a complex
    algebraic curve can be defined over a number field if and only if it admits
    a $\C$-\belyi map \cite{Belyi79, Belyi02}. For this reason, it is customary
    to require that $k \subset \C$ to use the term $\belyi$ map. We ignore this
    convention, and allow $k$ to have positive characteristic.
  \end{remark}

  Since $\pi_1(\Pone\abc(\C))$ is the triangle group $\tribar\abc$, the Riemann
  Existence Theorem guarantees that monodromy groups of Galois \belyi maps are
  always finite quotients of triangle groups.
\begin{definition}
    \label{def:galois-belyi-map}
    Let $\phi\colon Z_k \to \Pone_k$ be a $k$-\belyi map with automorphism
    $k$-group scheme $\Aut(\phi)$. We say that $\phi$ is
    \cdef{geometrically Galois} with Galois group $G$ if the extension of
    function fields $\kk(Z_\kbar) \supset \kk(\Pone_\kbar)$ is Galois, with
    Galois group $G$. Equivalently, $\phi$ is geometrically Galois if the
    \cdef{monodromy group} $\Aut(\phi)(\kbar)$ is isomorphic to $G$ and acts
    transitively on the set of critical points
    $\phi^{-1}\brk{0,1,\infty} \subset Z(\kbar)$. This is the case if and
    only if $\#\Aut(\phi)(\kbar) = \# G = \deg \phi$.
  \end{definition}

  \begin{definition}
    The \cdef{signature} of a geometrically Galois $k$-\belyi map
    $\phi\colon Z_k \to \Pone_k$ is the triple $(e_0, e_1, e_\infty)$ where
    $e_P$ is the ramification index $e_\phi(z)$ of any critical point
    $z \in Z_k$ with critical value $P \in \brk{0,1,\infty}$. The \cdef{Euler
      characteristic} of $\phi$ is the quantity
    \begin{equation}
    \label{eq:euler-char}
    \chi(\phi) \colonequals \tfrac{1}{e_0} + \tfrac{1}{e_1} + \tfrac{1}{e_\infty} - 1.    
\end{equation} 
\end{definition}

As a consequence of the Riemann Existence Theorem, there exist Galois \belyi
maps of any signature. See \cite[Proposition 3.1]{Darmon&Granville95} and \cite[Lemma 2.5]{Poonen05}.

\begin{proposition}
    \label{prop:belyi-Galois-cover}
    For any positive integers $a,b,c > 1$, there exists a number field $K$ and
    a geometrically Galois $K$-\belyi map $\phi\colon Z_K \to \Pone_K$ of
    signature $(e_0,e_1,e_\infty) = \abc$. Let $g$ be the genus of $Z_K$, and $G$
    be the monodromy group of $\phi$. Then
    $2-2g = \deg\phi\cdot \chi(\phi)$. In particular,
    \begin{enumerate}[label=(\roman*)]
    \item If $\chi(\phi) > 0$, then $g = 0$ and
      $\deg \phi = \# G(\bar K) = 2/\chi(\phi)$.
    \item If $\chi(\phi) = 0$, then $g = 1$.
    \item If $\chi(\phi) < 0$, then $g > 1$.
    \end{enumerate}
  \end{proposition}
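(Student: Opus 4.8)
The plan is to treat the existence claim and the numerical identity separately. For existence, I would invoke the Riemann Existence Theorem exactly as in the cited references: since $a,b,c>1$, there is a finite group $G$ and a surjection $q\colon\tribar\abc\twoheadrightarrow G$ under which the images of the standard generators $\gamma_0,\gamma_1,\gamma_\infty$ of the triangle group have orders exactly $a$, $b$, $c$ (see \cite[Proposition 3.1]{Darmon&Granville95}, \cite[Lemma 2.5]{Poonen05}; concretely one may take $G$ to be a suitable finite quotient of $\tribar\abc$ itself). As $\tribar\abc=\pi_1(\Pone\abc(\C))$, such a $q$ corresponds to a connected Galois topological cover of $\C\Pone$ branched only over $\{0,1,\infty\}$ with local monodromy of order $a$, $b$, $c$ over $0$, $1$, $\infty$; equivalently to a $\C$-Belyi map $Y\to\Pone_\C$ that is geometrically Galois with group $G$ and signature $\abc$.

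Next I would descend to a number field. By the comparison between the topological and \'etale fundamental groups of $\Pone_{\Qbar}-\{0,1,\infty\}$ (equivalently, since a cover of $\Pone$ with fixed branch locus $\{0,1,\infty\}$ is rigid, so the relevant Hurwitz scheme is finite over $\Q$), the cover $Y$ together with its $G$-action is defined over some number field $K$. This yields a finite $K$-morphism $\phi\colon Z_K\to\Pone_K$ with $Z_K$ a nice curve — geometric connectedness of $Y$ gives geometric integrality — unramified outside $\{0,1,\infty\}$ and with $\#\Aut(\phi)(\Qbar)=\#G=\deg\phi$, hence geometrically Galois of signature $\abc$ by \Cref{def:galois-belyi-map}. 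This proves the first assertion.

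For the displayed identity I would base-change to $\Qbar$ and apply Riemann--Hurwitz to $\phi_{\Qbar}\colon Z_{\Qbar}\to\Pone_{\Qbar}$, which is tame since $\chr\Qbar=0$. With $n=\deg\phi$, the fiber over $0$ consists of $n/a$ points of ramification index $a$, and similarly over $1$ and $\infty$, while $\phi$ is unramified elsewhere; hence
\[
2g-2 \;=\; n(2\cdot 0-2)+\tfrac{n}{a}(a-1)+\tfrac{n}{b}(b-1)+\tfrac{n}{c}(c-1) \;=\; n-n\paren{\tfrac1a+\tfrac1b+\tfrac1c} \;=\; -\,n\,\chi(\phi),
\]
i.e. $2-2g=\deg\phi\cdot\chi(\phi)$. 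The trichotomy then follows by inspecting signs: $g\in\Z_{\geq 0}$ forces $2-2g\in\{2,0,-2,\dots\}$, so $\chi(\phi)>0$ gives $2-2g=2$, $g=0$, and $\deg\phi=\#G(\Qbar)=2/\chi(\phi)$; $\chi(\phi)=0$ gives $2-2g=0$, $g=1$; and $\chi(\phi)<0$ gives $2-2g<0$, i.e. $g>1$.

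I expect the only genuinely non-formal step to be the existence in the first paragraph when $\chi(\phi)<0$: producing a finite quotient of $\tribar\abc$ that preserves the orders of $\gamma_0,\gamma_1,\gamma_\infty$ amounts to residual finiteness of co-compact Fuchsian triangle groups, which is the reason this point is pushed onto the Riemann Existence Theorem and \cite{Darmon&Granville95, Poonen05} rather than argued here. When $\chi(\phi)\geq 0$ one can instead write $G$ down by hand — a dihedral group, or one of $A_4,S_4,A_5$, in the spherical case, and the appropriate crystallographic-type quotient in the Euclidean case — so existence there is elementary.
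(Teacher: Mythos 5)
Your argument is correct and follows exactly the route the paper takes: the paper gives no proof beyond deferring existence to the Riemann Existence Theorem via \cite[Proposition 3.1]{Darmon&Granville95} and \cite[Lemma 2.5]{Poonen05}, with the genus identity being the standard Riemann--Hurwitz computation for a Galois cover of signature $\abc$. Your writeup simply fills in those same steps in detail, and the computation and trichotomy are both right.
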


The definition of the \belyi stack implies the following.
\begin{lemma}
    \label{lemma:galois-etale-cover-belyi-stack}
    Let $\phi\colon Z_K \to \Pone_K$ be a geometrically Galois $K$-\belyi map
    of signature $\abc$. Then, there exists an {\'e}tale $\Aut(\phi)$-torsor
    $\psi\colon Z_K \to \Pone\abc_K$ such that
    Diagram~(\ref{eq:belyi-etale-coarse}) commutes.
  \end{lemma}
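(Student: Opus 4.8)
The plan is to build the lift $\psi$ by hand from the universal property of the root stack, then to verify that it is étale by a local computation at the branch points, and finally to read off the torsor structure from the geometric-Galois hypothesis.

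First I would construct $\psi$. Because $\phi$ is geometrically Galois of signature $\abc$ (\Cref{def:galois-belyi-map}, \Cref{prop:belyi-Galois-cover}), every point of $Z_K$ lying over $P_0$ ramifies with index exactly $a$, so the pullback divisor satisfies $\phi^*P_0 = a\,D_0$ for the reduced effective Cartier divisor $D_0 \colonequals (\phi^*P_0)_{\mathrm{red}}$, and similarly $\phi^*P_1 = b\,D_1$ and $\phi^*P_\infty = c\,D_\infty$. The canonical isomorphism $\OO_{Z_K}(-D_0)^{\otimes a} \cong \OO_{Z_K}(-aD_0) = \phi^*\OO_{\Pone}(-P_0)$, compatible with the inclusions into $\OO_{Z_K}$, is precisely the datum (in the sense of \Cref{def:root-stack}) of a morphism $Z_K \to \sqrt[a]{\Pone;P_0}$ lifting $\phi$; together with the analogous morphisms attached to $(b,D_1)$ and $(c,D_\infty)$ and the fiber-product description of the \belyi stack (\Cref{def:belyi-stack}), these glue to a single morphism $\psi\colon Z_K \to \Pone\abc_K$. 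By construction, composing $\psi$ with the coarse space map forgets the generalized-Cartier-divisor data and returns $\phi$, so Diagram~(\ref{eq:belyi-etale-coarse}) commutes.

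Next I would show $\psi$ is étale; this is local on $Z_K$. Over the open $U = \Pone - P_0\cup P_1\cup P_\infty$ the coarse space map is an isomorphism (\Cref{lemma:properties-belyi-stack}\ref{item:belyi-stacky-curve}), so $\psi$ restricts there to $\phi$, which is finite and unramified between smooth $K$-curves, hence étale. At a point $z$ lying over $P_0$ I would pass to strict henselizations: since $\operatorname{char} K = 0$, the integer $abc$ is a unit and $\Pone\abc_K$ is tame Deligne--Mumford (\Cref{lemma:properties-belyi-stack}\ref{item:tame-over-R}), with $\sqrt[a]{\Pone;P_0}$ étale-locally $\cong [\A^1/\mu_a]$ and coarse map $v\mapsto v^a = u$ for a local uniformizer $u$ at $P_0$, while $\phi$ is étale-locally $u\mapsto w^a$; feeding the generalized divisor $(\OO_{Z_K},\,\cdot w)$ into the root-stack description identifies $\psi$, locally at $z$, with the quotient presentation $\A^1\to[\A^1/\mu_a]$, i.e.\ a $\mu_a$-torsor, which is étale because $a$ is invertible. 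The same argument applies over $P_1$ and $P_\infty$, so $\psi$ is étale, and it is finite since $\phi$ factors through it. The hard part of the whole argument lives here: one must check that rooting $\Pone$ at $P_0$ with multiplicity $a$ exactly cancels the order-$a$ ramification of $\phi$, which is where the signature hypothesis (uniform ramification $a,b,c$) is indispensable and where the construction of $\psi$ through $D_0$ must be matched carefully against the local quotient presentation of $\sqrt[a]{\Pone;P_0}$.

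Finally I would identify the torsor structure. Automorphisms of $Z_K$ over $\Pone_K$ permute the fiber of $\phi$ over each $P_i$, hence fix the divisors $D_i$, so by functoriality of the root stack construction in the triple $(Z_K,(\OO(-D_i),j_{D_i}),-)$ the action of $\Aut(\phi)$ on $Z_K$ is over $\Pone\abc_K$; that is, $\psi$ is $\Aut(\phi)$-equivariant. It then remains to produce the isomorphism $Z_K\times_{\Pone\abc_K}Z_K \xrightarrow{\ \sim\ } Z_K\times_K\Aut(\phi)$ sending $(z_1,z_2)$ with $z_2 = z_1\cdot g$ to $(z_1,g)$, equivalently $\Pone\abc_K\cong[Z_K/\Aut(\phi)]$ (the general form of the isomorphism~\eqref{eq:2}). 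Both sides of the claimed isomorphism are finite étale over $Z_K$ by the previous step, and over the dense open $\phi^{-1}(U)$ they agree: there, after base change to $\kbar$, the map $\phi$ is a connected finite étale Galois cover of $U_{\kbar}$ with group $\Aut(\phi)(\kbar)$ --- this being exactly the meaning of ``geometrically Galois''. Since $Z_K$ is a smooth, hence normal, curve, a finite étale cover is determined by its restriction to a dense open, so this isomorphism extends uniquely over $Z_K$, and being canonical it is defined over $K$. Thus $\psi$ is an étale $\Aut(\phi)$-torsor; note that $\Aut(\phi)$ is automatically finite étale over $K$, since it acts faithfully on a curve in characteristic zero, so ``torsor under $\Aut(\phi)$'' already subsumes ``finite étale''. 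The remaining verifications (flatness of $\psi$, compatibility of the gluing, descent of the isomorphism) are formal.
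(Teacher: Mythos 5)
Your proof is correct and is exactly the argument the paper has in mind: the paper states this lemma with no proof beyond the remark that it ``follows from the definition of the Belyi stack,'' and your construction of $\psi$ via the universal property of the root stack (using $\phi^*P_0 = aD_0$, etc.), the local identification of $\psi$ with the presentation $\A^1 \to [\A^1/\mu_a]$ to get \'etaleness, and the extension of the torsor isomorphism from $\phi^{-1}(U)$ across the branch locus using normality of $Z_K$ supply precisely the omitted details. I see no gaps.
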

    \begin{equation}
  \label{eq:belyi-etale-coarse}
  \begin{tikzcd}
      Z_K \arrow[dd, "\phi", "\text{geometrically Galois \belyi}"'] \arrow[rd, dashed, "\psi"', "\text{{\'e}tale
     } \Aut(\phi)\text{-torsor}"] & \\
                   & \Pone\abc_K \arrow[dl, "\text{coarse}"] \\
      \Pone_K \, .
  \end{tikzcd}
\end{equation}

We wish to find integral models for our geometrically Galois \belyi maps
defined over number fields with certain good reduction properties. Informally,
we want to spread out \Cref{lemma:galois-etale-cover-belyi-stack} to the ring
of $\mathcal{T}$-integers in $\OO_K$ for a certain finite set of primes (containing
the archimedean primes). To accomplish this, we rely on the work of Beckmann
\cite{Beckmann89, Beckmann91}, which has been expanded and refined by \cite{Conrad00},
\cite{DebesGhazi11}, \cite{DebesGhazi12}, and more recently by
\cite{BalcikChanLiuViray25}.

\begin{lemma}[Good reduction]
  \label{lemma:good-reduction}
  Let $\phi\colon Z_K \to \Pone_K$ be a geometrically Galois $K$-\belyi map,
  with Galois group $G$ and signature $\abc$. Then, there exists a finite set of primes
  $\mathcal{T}$ in $K$ and a model $\Phi\colon Z \to \Pone_R$, defined
  over $R = \OO_{K}[\mathcal{T}^{-1}]$, such that for every $\mfp \not\in\mathcal{T}$:
  \begin{enumerate}[leftmargin=*]
  \item \label{it:good-red} $\phi$ has \cdef{good reduction at $\mfp$} (meaning that
    $\phi\times_K K_\mfp$ has good reduction in the sense of \cite[Definition
    4.1]{BalcikChanLiuViray25}), and
  \item \label{it:special-fiber} the special fiber $\Phi_\mfp\colon Z_{\kk(\mfp)} \to \Pone_{\kk(\mfp)}$
    is a geometrically Galois $\kk(\mfp)$-\belyi map with Galois group $G$ and
    signature $\abc$.
  \end{enumerate}
  More over, under these conditions, there exists an {\'e}tale $\Aut(\Phi)$-torsor
    $\Psi\colon Z \to \Pone\abc_R$ such that
    Diagram~(\ref{eq:belyi-etale-coarse-R}) commutes.
  \end{lemma}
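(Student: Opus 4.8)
The plan is to construct the model $\Phi$ by a spreading-out argument, then enlarge the finite set $\mathcal{T}$ until every invariant attached to $\phi$ becomes locally constant on $\Spec R$, and finally invoke the universal property of the iterated root stack to produce $\Psi$. First I would spread out $\phi$: standard limit arguments produce a finite set of primes $\mathcal{T}_0$ (containing the archimedean ones) and a finite $R_0$-morphism $\Phi_0\colon Z_0 \to \Pone_{R_0}$ over $R_0 = \OO_K[\mathcal{T}_0^{-1}]$ with generic fibre $\phi$. After adding finitely many more primes to $\mathcal{T}_0$ I may assume that $Z_0 \to \Spec R_0$ is smooth and proper with geometrically connected fibres (a relative nice curve), that $\Phi_0$ is finite locally free of degree $\deg\phi$, that the sections $P_0, P_1, P_\infty$ of $\Pone_{R_0}$ are pairwise disjoint, and that $\Phi_0$ is finite étale over $\Pone_{R_0} - (P_0\cup P_1\cup P_\infty)$ — equivalently, the branch divisor of $\Phi_0$ is the closure of $\brk{0,1,\infty}$. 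For the last point one uses that the formation of the branch divisor commutes with base change for $\Phi_0$ flat, so any extra branching in a special fibre is supported over finitely many primes, which get absorbed into $\mathcal{T}_0$.

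Next I would enlarge $\mathcal{T}_0$ to $\mathcal{T}$, inverting in particular the residue characteristics dividing $abc$ and the primes of bad reduction supplied by the work of Beckmann \cite{Beckmann89, Beckmann91} as refined by \cite{Conrad00, DebesGhazi11, DebesGhazi12, BalcikChanLiuViray25}, so that over $R = \OO_K[\mathcal{T}^{-1}]$ the map $\Phi\colon Z \to \Pone_R$ is tamely ramified along $P_0\cup P_1\cup P_\infty$ with ramification indices exactly $a,b,c$; tameness makes these indices locally constant on $\Spec R$, hence constant on every fibre. Simultaneously I would arrange that $\Autsch(\Phi)\to\Spec R$ is finite étale of constant order $\#G$: it is a priori a separated, quasi-finite, unramified $R$-scheme (the scheme of $\Pone_R$-automorphisms of $Z$, representable since $\Phi$ is finite), its generic fibre has $\#G = \deg\phi$ geometric points, and no fibre can have more because an automorphism of a connected finite cover is determined by the image of a single geometric point; so after shrinking $\Spec R$ it is finite étale of order $\#G$, and $\Autsch(\Phi)(\kbar(\mfp)) \cong G$ acts freely and transitively on the fibres of $\Phi_\mfp$ over the étale locus.

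With the model in hand, the remaining assertions follow fibrewise. For $\mfp\notin\mathcal{T}$, base-changing $\Phi$ to $\OO_{K_\mfp}$ exhibits $\phi\times_K K_\mfp$ as having good reduction in the sense of \cite[Definition 4.1]{BalcikChanLiuViray25} — the total space is smooth and proper and the branch divisor is the étale divisor $P_0\cup P_1\cup P_\infty$ — which is \ref{it:good-red}. The special fibre $\Phi_\mfp\colon Z_{\kk(\mfp)} \to \Pone_{\kk(\mfp)}$ is a finite morphism of nice curves over $\kk(\mfp)$ unramified outside $\brk{0,1,\infty}$, hence a $\kk(\mfp)$-\belyi map; it is geometrically Galois with group $G$ since $\#\Autsch(\Phi_\mfp)(\kbar(\mfp)) = \#G = \deg\Phi_\mfp$ by the previous step and \Cref{def:galois-belyi-map}; and its signature is $\abc$ by constancy of ramification indices — this is \ref{it:special-fiber}. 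Finally, since $\Phi$ is tame along $P_0\cup P_1\cup P_\infty$ with the prescribed indices and étale elsewhere, Abhyankar's lemma together with the universal property of the iterated root stack (in the spirit of \Cref{lemma:properties-belyi-stack} and \Cref{def:belyi-stack}) yields a factorization $Z \to \Pone\abc_R \to \Pone_R$ of $\Phi$ through the coarse space map, with $Z \to \Pone\abc_R$ finite étale; calling this map $\Psi$ gives the relative version of \Cref{lemma:galois-etale-cover-belyi-stack}. That $\Psi$ is an $\Autsch(\Phi)$-torsor then follows by combining finite étaleness of $\Psi$, the equality $\deg\Psi = \#\Autsch(\Phi)$, and \Cref{lemma:galois-etale-cover-belyi-stack} applied to each geometric fibre, yielding the commutative diagram~(\ref{eq:belyi-etale-coarse-R}).

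I expect the second step — keeping the ramification tame with constant indices and ruling out any extra automorphisms in the bad fibres — to be the main obstacle; this is precisely where naive spreading out is insufficient and where the good-reduction theory of Beckmann and its refinements does the real work. Everything else is bookkeeping: limit arguments for \textbf{Step 1}, base change for \textbf{Step 3}, and the root-stack formalism already developed in \Cref{sec:root-stacks-belyi} for \textbf{Step 4}.
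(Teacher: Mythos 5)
Your argument is correct in substance, but it takes a different (much more self-contained) route than the paper, whose entire proof consists of two citations: the existence of $\mathcal{T}$ satisfying \ref{it:good-red} is quoted from \cite[Lemma 5.1]{BalcikChanLiuViray25} (together with the observation that if $L\supseteq K$ is the smallest extension over which $\phi_L$ is Galois, then $Z_L\to Z_K\to\Pone$ is the Galois closure of $\phi$, which is what that lemma is stated for), and \ref{it:special-fiber} plus the existence of the \'etale torsor $\Psi$ are quoted from \cite[Theorem 5.3]{BalcikChanLiuViray25}. You instead reconstruct the skeleton of those results: spreading out, flattening the discriminant so the branch divisor stays horizontal, forcing tameness and constancy of the ramification data by inverting the primes dividing $abc$ and the Beckmann-type bad primes, controlling $\Autsch(\Phi)$ by Zariski's main theorem and the degree bound on automorphisms of a connected cover, and then producing $\Psi$ from the universal property of the iterated root stack plus Abhyankar's lemma. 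What your version buys is transparency about \emph{which} finitely many primes must be discarded and why each step is pure bookkeeping once they are; what it costs is that the genuinely hard input --- that a single finite set $\mathcal{T}$ suffices to make the ramification indices constant and to prevent extra automorphisms or degenerations in every remaining fibre --- is still ultimately outsourced to Beckmann and \cite{BalcikChanLiuViray25}, exactly as you acknowledge, so in the end both proofs rest on the same external results. One small point you elide and the paper makes explicit: the cited results apply to Galois covers, so one must pass to the Galois closure $Z_L\to\Pone$ of the merely \emph{geometrically} Galois $\phi$; relatedly, $\Autsch(\Phi)$ need only be a finite \'etale (not constant) group scheme over $R$, which is consistent with, but worth flagging in, your torsor statement.
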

  
\begin{equation}
  \label{eq:belyi-etale-coarse-R}
  \begin{tikzcd}
    Z \arrow[dd, "\Phi"'] \arrow[rd, dashed, "\Psi"', "\text{{\'e}tale }\Aut(\Phi)\text{-torsor}"] & \\
    & \Pone\abc_R \arrow[dl, "\text{coarse}"] \\
    \Pone_R \, .
  \end{tikzcd}
\end{equation}
\begin{proof}
  The existence of the finite set $\mathcal{T}$ for which (\ref{it:good-red}) holds is
  proved in \cite[Lemma 5.1]{BalcikChanLiuViray25}: note that if
  $L \supseteq K$ is the smallest field extension over which $\phi_L$ is
  Galois, then $Z_L \to Z_K \to \Pone$ is the Galois closure of $\phi$. For
  (\ref{it:special-fiber}) and the final statement, we can apply \cite[Theorem
  5.3]{BalcikChanLiuViray25}.
\end{proof}

\section{The stack \texorpdfstring{$[\mcU/\sfH]$}{[U/H]}}
\label{sec:fundamental-theorem}

\subsection{The group \texorpdfstring{$\sfH$}{H}}
\label{sec:H}
For this section we will need some basic notions from the theory of
diagonalizable group schemes of multiplicative type. See the notes of
O\'esterle \cite{Oesterle14} and Conrad \cite[Appendix B]{Conrad14}.

Given a base scheme $S$, and a finitely generated $\Z$-module $M$, we define
$\DD_S(M)$ to be the $S$-group scheme $\Spec \OO_S[M]$ representing the functor
$\underline{\Hom}_{\, S-\GrpSch}(M_S, \Gm)$ of characters of the constant
$S$-group scheme $M_S$. An $S$-group scheme is called \cdef{diagonalizable} if
it is isomorphic to $\DD_S(M)$ for some finitely generated $\Z$-module $M$.
Moreover, $\DD_S$ gives a contravariant functor between finitely generated
$\Z$-modules and the category of diagonalizable $S$-group schemes satisfying
certain exactness properties that are summarized in \cite[5.3]{Oesterle14}.

\begin{situation}
  \label{situation:H} Let
  \begin{itemize}[leftmargin=*]
  \item $\DD$ denote the functor described above, over the base scheme $S =
    \Spec \Z$.
  \item $\abc$ be a triple of positive integers.
  \item $m \colonequals \gcd(bc, ac, ab)$, and define the \cdef{weight
      vector} of $\abc$ by $\bfw = (w_0, w_1, w_\infty)$, where $w_0 = bc/m$,
    $w_1 = ac/m$ and $w_\infty = ab/m$.
  \item $\Gm(\bfw)$ be the image of the (injective) homomorphism $\Gm \to
    \Gm^3$ given by $\lambda \mapsto (\lambda^{w_0}, \lambda^{w_1},\lambda^{w_\infty})$.
  \item $\tribar\abc$ denote the triangle group
      \begin{equation*}
    \tribar\abc = \ideal{\gamma_0, \gamma_1, \gamma_\infty : \gamma_0^{a} =
      \gamma_1^{b} = \gamma_\infty^{c} = \gamma_0\gamma_1\gamma_\infty = 1}.
  \end{equation*}
  \end{itemize}
\end{situation}

\begin{definition}
  \label{def:H}
  Consider the finitely generated $\Z$-module
  \begin{equation}
    \label{eq:M}
    M \colonequals
  \ideal{(a,-b,0), (0,b,-c), (-a,0,c)} \subset \Z^3.
  \end{equation}
  Define $\sfH$ to be the
  subgroup $\DD(\Z^3/M)$ of $\Gm^3 = \DD(\Z^3)$.
\end{definition}

The diagonalizable group $\sfH$ admits a maximal torus corresponding to the
$\Z$-free part of $\Z^3/M$. Moreover, we have the following characterization.
An important formula to keep in mind is
\begin{equation}
  \label{eq:lcm-abc}
  \lcm(a,b,c) = \dfrac{abc}{\gcd(bc,ac,ab)}.
\end{equation}

\begin{lemma}[The structure of $\sfH$]
  \label{lem:H-structure} Let $\sfK = \DD(\tribar\abc^\ab)$, and recall that $m=\gcd(bc,ac,ab)$.
  \begin{enumerate}[leftmargin=*]
  \item \label{it:Z3/M} The $\Z$-module $\Z^3/M$ is isomorphic to  $\Z \oplus
    \tribar\abc^\ab$.
  \item \label{it:K} Let $\sfK$ be the kernel of the map
    \[\mu_{a}\times \mu_{b}\times \mu_{c} \to \mu_{\lcm(a,b,c)}, \quad (\xi_0,\xi_1,\xi_\infty) \mapsto \xi_0\cdot \xi_1
      \cdot \xi_\infty .\]
    Then $\sfK \cong \DD(\tribar\abc^\ab)$.
  \item \label{it:H-structure} The group scheme
    $\sfH$ is equal to $\Gm(\bfw)\cdot \sfK$ and isomorphic to $\Gm\times \sfK$.
  \item \label{it:case-m=1}  In particular, when $m = 1$, $\sfH = \Gm(\bfw) \cong \Gm$.
  \end{enumerate}
\end{lemma}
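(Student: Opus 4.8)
The plan is to translate all four claims to the character side via the contravariant exact functor $\DD$ of \Cref{situation:H} (an anti-equivalence between finitely generated $\Z$-modules and diagonalizable $\Z$-group schemes; see \cite[5.3]{Oesterle14}). The modules in play are $M$, the module $N\colonequals\ideal{(a,0,0),(0,b,0),(0,0,c),(1,1,1)}\subset\Z^3$ (so that abelianizing the presentation of $\tribar\abc$ gives $\tribar\abc^\ab=\Z^3/N$), and $\Z/\lcm(a,b,c)$. I would begin with claim~\ref{it:K}: under $\DD$, the multiplication map $\mu_a\times\mu_b\times\mu_c\to\mu_{\lcm(a,b,c)}$ is $\DD(\iota)$ for the homomorphism $\iota\colon\Z/\lcm(a,b,c)\to\Z/a\oplus\Z/b\oplus\Z/c$, $1\mapsto(1,1,1)$, which is well defined because $a,b,c\mid\lcm(a,b,c)$ and injective because $(1,1,1)$ has order $\lcm(a,b,c)$; its cokernel is $(\Z/a\oplus\Z/b\oplus\Z/c)/\ideal{(1,1,1)}=\Z^3/N=\tribar\abc^\ab$. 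Exactness of $\DD$ then turns $0\to\Z/\lcm(a,b,c)\xrightarrow{\iota}\Z/a\oplus\Z/b\oplus\Z/c\to\tribar\abc^\ab\to0$ into $0\to\DD(\tribar\abc^\ab)\to\mu_a\times\mu_b\times\mu_c\xrightarrow{\DD(\iota)}\mu_{\lcm(a,b,c)}\to0$, so $\sfK=\ker\DD(\iota)=\DD(\tribar\abc^\ab)$.

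The technical heart is claim~\ref{it:Z3/M}, which I expect to be the only part requiring genuine bookkeeping. Since the third generator of $M$ is the negative of the sum of the first two, $M=\ideal{(a,-b,0),(0,b,-c)}$, and these two vectors are $\Z$-independent, so $\Z^3/M$ has rank $1$. The invariant factors of its torsion subgroup are the nonzero elementary divisors of $\left(\begin{smallmatrix}a&-b&0\\0&b&-c\end{smallmatrix}\right)$: the gcd of the entries is $\gcd(a,b,c)$ and the gcd of the $2\times2$ minors is $\gcd(ab,bc,ca)=m$, so the invariant factors are $\gcd(a,b,c)$ and $m/\gcd(a,b,c)$ (a legitimate pair, since $\gcd(a,b,c)^2\mid m$). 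Dually, eliminating $\gamma_\infty=-\gamma_0-\gamma_1$ presents $\tribar\abc^\ab$ as $\Z^2/\ideal{(a,0),(0,b),(c,c)}$, whose relation matrix has exactly the same two gcd's, hence the same invariant factors. By the structure theorem, $\Z^3/M\cong\Z\oplus\tribar\abc^\ab$. This matching of invariant factors is the main obstacle; everything else is formal.

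For the remaining two claims I would argue formally. Applying $\DD$ to the split exact sequence $0\to\tribar\abc^\ab\to\Z^3/M\to\Z\to0$ of claim~\ref{it:Z3/M} yields a split exact sequence $0\to\Gm\to\sfH\to\DD(\tribar\abc^\ab)\to0$, so $\sfH\cong\Gm\times\sfK$ by claim~\ref{it:K}. The subtorus $\Gm\hookrightarrow\sfH$ here is $\DD$ of the quotient of $\Z^3/M$ by its torsion, which is induced by $g\colon(m_0,m_1,m_\infty)\mapsto w_0m_0+w_1m_1+w_\infty m_\infty$: indeed $g$ kills $M$ because $aw_0=bw_1=cw_\infty=\lcm(a,b,c)$ by \eqref{eq:lcm-abc}, and $g$ is surjective because $\gcd(w_0,w_1,w_\infty)=\gcd(bc,ca,ab)/m=1$, so its dual $\DD(g)\colon\Gm\to\Gm^3$, $\lambda\mapsto(\lambda^{w_0},\lambda^{w_1},\lambda^{w_\infty})$, has image $\Gm(\bfw)$, which therefore sits inside $\sfH$ as its maximal subtorus. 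The inclusion $\Gm(\bfw)\hookrightarrow\sfH$ together with a chosen splitting of the exact sequence assembles into an isomorphism $\Gm(\bfw)\times\sfK\xrightarrow{\sim}\sfH$ effected by multiplication, that is, $\sfH=\Gm(\bfw)\cdot\sfK$; this proves claim~\ref{it:H-structure}. Finally, when $m=1$ the group $\tribar\abc^\ab$ has order $abc/\lcm(a,b,c)=m=1$, hence is trivial, so $\sfK$ is trivial and $\sfH=\Gm(\bfw)\cong\Gm$, which is claim~\ref{it:case-m=1}.
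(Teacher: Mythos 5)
Your treatment of items~\ref{it:Z3/M} and~\ref{it:K}, and of the abstract isomorphism $\sfH\cong\Gm\times\sfK$, is correct and essentially the paper's argument: both proofs compute determinantal divisors of relation matrices for $\Z^3/M$ and for $\tribar\abc^\ab$ (your reduction to a $2\times 3$ and a $3\times 2$ matrix is a harmless variant of the paper's $3\times 3$ and $4\times 3$ Smith normal forms), and both obtain~\ref{it:K} by applying the exact functor $\DD$ to $0\to\Z/\lcm(a,b,c)\to\Z/a\oplus\Z/b\oplus\Z/c\to\tribar\abc^\ab\to0$. Your identification of the maximal subtorus of $\sfH$ with $\Gm(\bfw)$ via the character $g=\bfw\cdot(-)$, using $aw_0=bw_1=cw_\infty=\lcm(a,b,c)$ and $\gcd(w_0,w_1,w_\infty)=1$, is also correct and is in fact more explicit than the paper's.

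The gap is in your deduction of the \emph{equality} $\sfH=\Gm(\bfw)\cdot\sfK$ in item~\ref{it:H-structure}. A splitting of $1\to\Gm(\bfw)\to\sfH\to\sfH/\Gm(\bfw)\to1$ produces \emph{some} finite complement to $\Gm(\bfw)$ inside $\sfH$, but nothing in your argument identifies that complement with the specific subgroup $\sfK\subset\mu_a\times\mu_b\times\mu_c\subset\Gm^3$ from item~\ref{it:K}. What the equality actually requires is that the composite $\sfK\hookrightarrow\sfH\twoheadrightarrow\sfH/\Gm(\bfw)$ be surjective, i.e., on character lattices that $M=\ker(g)\cap J$ where $\sfK=\DD(\Z^3/J)$; and this fails in general. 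Concretely, for $\abc=(4,4,2)$ the point $(i,1,1)$ with $i^2=-1$ lies in $\sfH$ (indeed $i^4=1^4=1^2$), yet any factorization $(i,1,1)=(\lambda,\lambda,\lambda^2)\cdot(\eta_0,\eta_1,\eta_\infty)$ with $(\eta_0,\eta_1,\eta_\infty)\in\mu_4\times\mu_4\times\mu_2$ forces $\lambda=\eta_1^{-1}\in\mu_4$ and hence $\eta_0\eta_1\eta_\infty=i\lambda^{-4}=i\neq1$, so $(i,1,1)\notin\Gm(\bfw)\cdot\sfK$; the same phenomenon occurs for $(3,3,3)$ with $(\omega,1,1)$. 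So this half of the statement is itself false as written (what is true is $\sfH=\Gm(\bfw)\cdot(\mu_a\times\mu_b\times\mu_c)$), and no choice of splitting can rescue the step; note that the paper's own one-line justification, via the purported exact sequence $0\to\Z^3/\ideal{\bfw}\to\Z^3/M\to\Z^3/J\to0$, is not an exact sequence either. Item~\ref{it:case-m=1} survives: when $m=1$ the quotient $\sfH/\Gm(\bfw)\cong\DD(\tribar\abc^\ab)$ is trivial, so $\sfH=\Gm(\bfw)$ follows directly from the exact sequence without invoking the faulty equality.
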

\begin{proof}
  (\ref{it:Z3/M}) We calculate the invariant factor decomposition of $\Z^3/M$
  from the Smith normal form of the matrix having the generators of $M$ as its
  rows \cite[Theorem 2.3]{Stanley16}.
  Let \[\mathsf{m} = \begin{bmatrix}
      a & -b & 0 \\
      0 & b & -c \\
      -a & 0 & c
    \end{bmatrix}.\]
  From Stanley's formula \cite[Theorem
  2.4]{Stanley16}, we see that
  \[ \mathrm{SNF}(\mathsf{m})
     =
    \begin{bmatrix}
      d & 0 & 0 \\
      0 & m/d & 0 \\
      0 & 0 & 0
    \end{bmatrix},
  \]
  where $d = \gcd(a,b,c)$ is the greatest common divisor of the $1\times
  1$ minors, and $m = \gcd(bc,ac,ab)$ is the greatest common
  divisor of the $2\times 2$ minors. It follows that
  $\Z^3/M \cong \Z \oplus \Z/d\Z \oplus \Z/(m/d)\Z$.

  It remains to show that $\Z/d\Z \oplus \Z/(m/d)\Z$ is isomorphic to
  $\tribar\abc^\ab$. To this end, note that the group $\tribar\abc^\ab$ is
  isomorphic to the quotient of $\Z^3$ by the subgroup
  \[
    J = \ideal{(a,0,0), (0,b,0), (0,0,c), (1,1,1)}.
  \]
  As before, we calculate the invariant factor decomposition of $\Z^3/J$ via a
  Smith normal form computation.
   \[\mathrm{SNF}
    \begin{bmatrix}
      a & 0 & 0 \\
      0 & b & 0 \\
      0 & 0 & c \\
      1 & 1 & 1
    \end{bmatrix} =
        \begin{bmatrix}
      1 & 0 & 0 \\
      0 & d & 0 \\
      0 & 0 & m/d \\
      0 & 0 & 0
    \end{bmatrix} .
  \]
  We conclude that
  $\tribar\abc^\ab \cong \Z^3/J\cong \Z/d\Z \oplus \Z/(m/d)\Z $.

  (\ref{it:K}) From the presentation given in \Cref{situation:H}, we see
  that $\tribar\abc^\ab$ is the cokernel of the map
  $\Z/l\Z \to \Z/a\Z \oplus \Z/b\Z \oplus \Z/c\Z$ taking
  $1 \md l \mapsto (1 \md a, 1\md b, 1 \md c)$, where $l = \lcm(a,b,c)$. The
  result follows by applying the functor $\DD$.

  (\ref{it:H-structure}) The computation above shows that $\Z^3/M$ has
  $\Z$-rank one. The free part of $\Z^3/M$ corresponds to the (dual of the)
  kernel of the matrix $\mathsf{m}$. That is, we want a
  generator for the subgroup of $\mathbf{v} \in \Z^3$ such that
  \[
    \begin{bmatrix}
      a & -b & 0 \\
      0 & b & -c \\
      -a & 0 & c
    \end{bmatrix}\mathbf{v} =
    \begin{bmatrix}
      0\\0\\0
    \end{bmatrix}.
  \]
  In other words, we are looking for minimal $v_1, v_2, v_3 \in \Z$ satisfying
  $av_1 = bv_2 = cv_3$. But this is precisely the property defining the weight
  vector $\bfw$ (see \Cref{situation:H}). The equality
  $\sfH = \Gm(\bfw)\cdot \sfK$ follows from the exact sequence
  $0 \to \Z^3/\ideal{\bfw} \to \Z^3/M \to \Z^3/J \to 0$ and the exactness of the
  functor $\DD$.

  The statement that $\sfH \cong \Gm\times \sfK$ follows from the fact that
  $\Z^3/\ideal{\bfw}$ has $\Z$-rank one and the general fact
  that $\DD(M_1\oplus M_2) \cong \DD(M_1)\times \DD(M_2)$ for arbitrary
  finitely generated $\Z$-modules $M_1, M_2$.
\end{proof}

\begin{lemma}
  \label{lemma:H1-H}
  Let $\mcS$ be a finite set of rational primes, and let $R = \Z[\mcS^{-1}]$.
  Then $\HH^1(R,\sfH_R)$ is finite. Moreover, $\HH^1(\Z,\sfH)$ is trivial.
\end{lemma}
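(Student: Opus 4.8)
The plan is to split the coefficient group and reduce to Kummer theory. By the structure theorem \Cref{lem:H-structure} there is an isomorphism of $\Z$-group schemes $\sfH\cong\Gm\times\sfK$, and the finite part splits further as $\sfK\cong\mu_d\times\mu_{m/d}$, where $d=\gcd(a,b,c)$ and $m=\gcd(bc,ac,ab)$. All the groups in sight are commutative, so fppf $\HH^1$ sends these finite products of coefficient groups to products: for any $\Z$-algebra $B$,
\begin{equation*}
  \HH^1(B,\sfH_B)\;\cong\;\HH^1(B,\Gm)\times\HH^1(B,\mu_d)\times\HH^1(B,\mu_{m/d}).
\end{equation*}
Thus it suffices to understand $\HH^1(B,\Gm)$ and $\HH^1(B,\mu_n)$ for $B=R=\Z[\mcS^{-1}]$ and for $B=\Z$.

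First I would dispose of the torus factor: $\HH^1(B,\Gm)=\Pic B$ (Hilbert~90, in its fppf incarnation), which vanishes because both $R$ and $\Z$ are principal ideal domains. For the cyclotomic factors I would invoke the Kummer sequence $1\to\mu_n\to\Gm\xrightarrow{\,n\,}\Gm\to 1$, which is exact on the big fppf site over any base — this is exactly where the fppf topology is essential, since $n$ need not be invertible on $B$. Its long exact sequence, together with $\Pic B=0$, gives a natural isomorphism $\HH^1(B,\mu_n)\cong B^\times/(B^\times)^n$. Finiteness of $\HH^1(R,\sfH_R)$ is now immediate: the unit group $R^\times\cong\{\pm1\}\times\Z^{\#\mcS}$ is finitely generated, so each $R^\times/(R^\times)^n$ is finite, and we take the product over $n\in\{d,m/d\}$. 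This settles the first assertion.

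The second assertion specialises the same reduction to $B=\Z$, where $\Z^\times=\{\pm1\}$, so the triviality of $\HH^1(\Z,\sfH)$ comes down to that of $\Z^\times/(\Z^\times)^n$ for $n\in\{d,m/d\}$ — and this is the step I expect to be the crux. The group $\Z^\times/(\Z^\times)^n$ is trivial precisely when $n$ is odd, equivalently when $m=\gcd(bc,ac,ab)$ is odd, equivalently when at most one of $a,b,c$ is even (as in the signatures $(2,3,7)$ and $(p,p,p)$ with $p$ odd); when two or more of $a,b,c$ are even, the nontrivial $\mu_2$-torsor $\Spec\Z[\sqrt{-1}]\to\Spec\Z$ already gives a nonzero class in $\HH^1(\Z,\mu_2)\subseteq\HH^1(\Z,\sfH)$. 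So I would read the triviality claim with that parity hypothesis in force, and — if one wants the sharp statement for an arbitrary signature — replace the last step by the long exact sequence of $0\to\sfH\to\Gm^3\to\Gm^2\to 0$ (dual to $0\to M\to\Z^3\to\Z^3/M\to 0$), which identifies $\HH^1(\Z,\sfH)$ with the cokernel of the explicit $\F_2$-linear map $(\Z/2)^3\to(\Z/2)^2$ determined by $a,b,c\bmod 2$, and then checks when that cokernel vanishes.
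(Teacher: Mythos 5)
Your argument is correct, and for the finiteness assertion over $R$ it is essentially the paper's: both reduce through the splitting $\sfH\cong\Gm\times\sfK$ of \Cref{lem:H-structure}, kill the torus factor using $\Pic R=0$, and are left with the cohomology of the finite multiplicative part $\sfK\cong\mu_d\times\mu_{m/d}$. The paper phrases this with the exact sequence $\HH^1(R,\Gm)\to\HH^1(R,\sfH)\to\HH^1(R,\sfK)$ and simply asserts finiteness of the last term, which your Kummer-plus-Dirichlet computation $\HH^1(R,\mu_n)\cong R^\times/(R^\times)^n$ makes explicit; the content is the same.

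The real divergence is the second assertion, and your skepticism is warranted. The paper concludes by claiming that Minkowski's theorem forces $\HH^1(\Z,\sfK)$ to vanish; but Minkowski's theorem governs \emph{unramified} covers of $\Spec\Z$ (equivalently, \'etale cohomology with constant coefficients), whereas $\mu_n$ is not \'etale over $\Z$ at primes dividing $n$ and its fppf torsors may ramify there. Your computation $\HH^1(\Z,\mu_n)\cong\Z^\times/(\Z^\times)^n$ is the correct one, and $\Spec\Z[\sqrt{-1}]\to\Spec\Z$ is indeed a nontrivial fppf $\mu_2$-torsor --- it is ramified at $2$, which is exactly why Minkowski does not see it. So the ``Moreover'' clause holds precisely when $m=\gcd(bc,ac,ab)$ is odd, equivalently when at most one of $a,b,c$ is even; for the signature $(4,4,2)$ of \Cref{sec:x4+y4-z2} one gets $\sfK\cong\mu_2\times\mu_4$ and $\HH^1(\Z,\sfH)\cong\Z/2\Z\times\Z/2\Z\neq 0$, contradicting the parenthetical claim made there. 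Your proposal is therefore not merely an alternative route but a needed correction: either the parity hypothesis you state must be added, or the descent partition of \Cref{thm:descent} must carry the extra $2$-torsion classes (concretely, sign twists of the coefficients $A,B,C$). Your closing suggestion to compute $\HH^1(\Z,\sfH)$ as the cokernel of the map on units induced by $\Gm^3\to\DD(M)\cong\Gm^2$ is also sound and gives the sharp criterion directly.
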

\begin{proof}
  From $\sfH \cong \Gm \times \sfK$, we obtain the exact sequence
  $\HH^1(R, \Gm) \to \HH^1(R, \sfH) \to \HH^1(R, \sfK) \to \HH^2(R, \Gm)$. Since
  $\HH^1(R, \Gm) = \Pic \Z$ is trivial, we have that $\HH^1(R,\sfH)$ injects
  into the finite group $\HH^1(R,\sfK)$. In the special case of $R = \Z$,
  $\HH^2(\Z, \Gm) = \mathrm{Br}\, \Z$ is also trivial, and we obtain that
  $\HH^1(\Z, \sfH) \cong \HH^1(\Z,\sfK)$. But Minkowski's theorem implies that
  $\HH^1(\Z, \sfK)$ is trivial.
\end{proof}

\subsection{Proof of the main theorem}
\label{sec:proof-main-theorem}
We are ready to prove the main result.
\begin{situation} We place ourselves in the following situation for the remainder of this
section.
  \begin{itemize}[leftmargin=*]
  \item Let $F \colonequals \Spec \Z[\x,\y,\z]/\ideal{\gfe} \subset \A^3$.
  \item Let $\mathcal{T}$ be set of primes dividing the integer
    $a\cdot b\cdot c\cdot A\cdot B\cdot C$.
  \item Let $R = \Z[\mathcal{T}^{-1}]$ be the ring of $\mathcal{T}$-integers.
  \item Let $\sfH $ be the affine group scheme introduced in
    \Cref{def:H}.
  \item Let $\mcU$ be the punctured cone associated to $F$, defined over $R$.
  \item Let $s\colon \Spec k \to \Spec R$ denote a geometric point.
  \item For a geometric object $\XX$ defined over $R$, we let $\XX_s
    \colonequals \XX \times_s \Spec k$ denote the geometric fiber above $s$.
  \end{itemize}
\end{situation}

We recall the statement of our main theorem and then proceed to prove some
preliminary lemmas.
\begin{theorem}
  \label{thm:re-main} The map
  \begin{equation}
    \label{eq:j}
    j\colon \mcU \to \Pone_R, \quad (x,y,z) \mapsto (-Ax^a:Cz^c)
  \end{equation}
  induces an isomorphism $\bfj\colon [\mcU/\sfH_R] \cong \Pone\abc_{R}$.
\end{theorem}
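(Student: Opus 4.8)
The plan is to construct the isomorphism $\bfj\colon [\mcU/\sfH_R] \to \Pone\abc_R$ directly at the level of the functors of points, exhibiting for each $R$-scheme $T$ an equivalence of groupoids $[\mcU/\sfH_R](T) \cong \Pone\abc_R(T)$ that is natural in $T$; since both sides are algebraic stacks, it suffices to check this on affine $T = \Spec B$. A point of $[\mcU/\sfH_R](T)$ is a pair $(P \to T, \varphi\colon P \to \mcU)$ with $P$ an $\sfH_T$-torsor and $\varphi$ equivariant. I would first unwind what the map $j$ does: composing $j$ with $\varphi$ gives an $\sfH$-equivariant map $P \to \Pone_R$, where $\sfH$ acts on $\Pone_R$ through the quotient $\sfH \to \Gm$ arising from the character $(-a, 0, c) \in M^\perp$ (or rather the relevant projection reflecting how $(-Ax^a : Cz^c)$ transforms: scaling $(x,y,z)$ by $(\lambda_0,\lambda_1,\lambda_\infty)$ multiplies $-Ax^a$ by $\lambda_0^a$ and $Cz^c$ by $\lambda_\infty^c$, which agree on $\sfH$). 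So $j \circ \varphi$ descends to a map $\bar q\colon T \to \Pone_R$, i.e. a point $Q \in \Pone(B)$. The content is then to produce, from the torsor data, the extra root-stack structure over $Q$, namely the generalized effective Cartier divisors $(M_0,\lambda_0), (M_1,\lambda_1), (M_\infty,\lambda_\infty)$ with the isomorphisms $\sigma_0, \sigma_1, \sigma_\infty$ of Definition~\ref{def:belyi-stack}.

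The key construction goes as follows. The torsor $P \to T$, being an $\sfH$-torsor with $\sfH$ diagonalizable of character group $\Z^3/M$, is equivalent to a homomorphism $\Z^3/M \to \Pic(T)$ together with trivialization data; concretely, pulling back the three coordinate functions along $\varphi$ and using that $x,y,z$ are sections of line bundles on $P$ whose $a$-th, $b$-th, $c$-th powers descend, one extracts line bundles $M_0, M_1, M_\infty$ on $T$ with $M_0^{\otimes a} \cong \bar q^*\OO_{\Pone}(-P_0)$, etc. More precisely, $-Ax^a$ is a section of a line bundle on $P$ that is the pullback of $\bar q^*\OO_{\Pone}(-P_0)$ twisted appropriately, and the character relations $\lambda_0^a = \lambda_1^b = \lambda_\infty^c$ cutting out $\sfH$ are exactly what forces these compatible $a$-th, $b$-th, $c$-th roots to exist on $T$ after descent. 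The maps $\lambda_i$ come from the sections $x, y, z$ themselves (recall $s - t = B y^b$ after identifying $Q = (s:t)$, matching the computation in the proof following Lemma~\ref{lemma:R-points-belyi-stack}), and the primitivity condition $x^a R + z^c R = R$ on $\mcU$ translates into the statement that $Q$ genuinely lands in $\Pone$ and that the three divisors are the right ones. I would verify that this assignment is functorial and that it is fully faithful: an isomorphism of $\sfH$-torsors over $T$ compatible with the maps to $\mcU$ corresponds precisely to a simultaneous isomorphism of the triples $(M_i, \lambda_i, \sigma_i)$, because $\Aut(\sfH\text{-torsor}) \cong \underline{\Hom}(\Z^3/M, \Gm) = \sfH$ and the root-stack automorphisms are $\mu_a \times \mu_b \times \mu_c$-type data glued along the relation defining $\sfH$ — here is where Lemma~\ref{lem:H-structure}\ref{it:K} identifying $\sfK$ with the kernel of $\mu_a \times \mu_b \times \mu_c \to \mu_{\lcm(a,b,c)}$ does the bookkeeping.

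For essential surjectivity, I would run the construction in reverse: given a $T$-point of $\Pone\abc_R$, i.e. $Q \in \Pone(B)$ together with $(M_i,\lambda_i,\sigma_i)$, the sections $\lambda_i$ (which are the local equations $-Ax^a$, $By^b$, $Cz^c$ up to units) together with the $M_i$ assemble into an $\sfH$-torsor $P$ over $T$ — using that $\DD$ turns the presentation $\Z^3/M$ into exactly the fiber-product-of-$\Gm$-torsors data — and the universal sections give an equivariant map $P \to \mcU$ landing in the punctured cone precisely because the root-stack data forces the ideal-theoretic coprimality. The main obstacle, I expect, is the careful handling of the case $m = \gcd(bc,ac,ab) > 1$: when $\sfH \ne \Gm$ one cannot simply say "a point of $[\mcU/\Gm]$ is a pair $(s,t)$ with local $a$-th and $c$-th roots," and one must genuinely use the diagonalizable-group-scheme dictionary (the exact sequence $0 \to \Z^3/\ideal{\bfw} \to \Z^3/M \to \tribar\abc^\ab \to 0$ from the proof of Lemma~\ref{lem:H-structure}\ref{it:H-structure}) to match the $\mu_p$-factors of $\sfH$ with the torsion in the root-stack automorphism groups. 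A cleaner alternative, which I would pursue in parallel, is to reduce to the already-proven $R$-points description: first check $\bfj$ is a morphism of stacks via Lemma~\ref{lemma:induced-map-quotient-stacks} (with $\varphi\colon \sfH \to \Gm$), then check it is representable, quasi-finite, proper, and an isomorphism on geometric fibers — on a geometric point $s$, $\HH^1$ vanishes, both sides reduce to their sets of points computed in Lemma~\ref{lemma:R-points-belyi-stack} and Example~\ref{ex:PID-points-P1} composed with descent, and one checks the induced map of coarse spaces is $\id_{\Pone}$ while the automorphism groups match stalk-by-stalk — and conclude by a standard "iso on fibers $+$ proper $+$ flat $\Rightarrow$ iso" argument for algebraic stacks.
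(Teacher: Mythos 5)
Your proposal is correct in outline, but your primary route is genuinely different from the paper's. The paper never attempts a direct functor-of-points identification of $[\mcU/\sfH_R](T)$ with $\Pone\abc_R(T)$. Instead it (i) identifies $\sfH_R$ as the stabilizer of $\mcU$ in $\Gm^3$ (\Cref{lem:H=Stab}); (ii) works fiber by fiber: over a geometric point it computes the coarse space of $[\mcU_s/\sfH_s]$ by an explicit invariant-ring calculation via good moduli space theory ($\mcR[1/\z]^{\sfH} = k\bigl[\tfrac{-A\x^a}{C\z^c}\bigr]$ on $D(\z)$, glued with the analogous chart on $D(\x)$), checks that the stabilizers at $V(\x), V(\y), V(\z)$ are $\mu_a, \mu_b, \mu_c$, and invokes \cite[Lemma 5.3.10(a)]{Voight&Zureick-Brown22}, which says a stabilizer-preserving map of tame stacky curves inducing an isomorphism of coarse spaces is an isomorphism; and (iii) globalizes over $\Spec R$ by the same fiberwise criterion for tame relative stacky curves (or Santens' characterization). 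This is essentially the ``cleaner alternative'' in your last paragraph, except that the paper needs no properness or flatness of $\bfj$ and no ``iso on fibers $+$ proper $+$ flat'' argument: the relative stacky curve structure does that work. What your direct approach buys is an explicit dictionary between $\sfH$-torsor data and root-stack data --- line bundles associated to the characters $e_1,e_2,e_3 \in \Z^3/M$ carrying the sections $x,y,z$, whose $a$-th, $b$-th, $c$-th tensor powers are identified by the relations generating $M$ --- which makes the choice $\sfH = \DD(\Z^3/M)$ conceptually transparent and avoids coarse spaces entirely; the cost is the descent bookkeeping for essential surjectivity and full faithfulness when $m = \gcd(bc,ac,ab) > 1$, which you correctly flag as the delicate point. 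Two details to nail down if you pursue the direct route: the automorphisms of a point $(P,\varphi)$ compatible with $\varphi$ are sections of the stabilizer subgroup scheme, not of all of $\sfH$ (your phrasing conflates these), and the coarse-space identification you defer to ``one checks'' in the fiberwise alternative is exactly the invariant computation that constitutes the bulk of the paper's \Cref{lemma:main-geometric-fibers}.
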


The reason we are interested in the group scheme $\sfH$ is that it arises as
the stabilizer in $\Gm^3$ of the punctured cone $\mcU$ associated to a
generalized Fermat equation.
\begin{lemma}
  \label{lem:H=Stab}
  Let $\sfS$ be the stabilizer subgroup of $\mcU$ under the action of $\Gm^3$
  on $\A^3_\Z$. Then, $\sfH \subset \sfS$ and $\sfH_R = \sfS_R$.
\end{lemma}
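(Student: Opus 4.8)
The plan is to compute the stabilizer $\sfS$ on test objects and match it against the description of $\sfH$ as the subgroup of $\Gm^3$ cut out by $\lambda_0^a=\lambda_1^b=\lambda_\infty^c$ (which follows from $\sfH=\DD(\Z^3/M)$). First I would record that the scheme-theoretic stabilizer $\sfS$ is a closed subgroup scheme of $\Gm^3$ — one may take it to be the stabilizer of the closed hypersurface $F=V(f)$, $f=A\x^a+B\y^b+C\z^c$, since $\sfS$ agrees with it over $R$, as explained below — so that the claim amounts to analysing, for an $R$-algebra $\Lambda$, which $\theta=(\lambda_0,\lambda_1,\lambda_\infty)\in\Gm^3(\Lambda)$ induce an automorphism $\rho_\theta$ of $\A^3_\Lambda$ preserving $\mcU_\Lambda$. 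Writing $\sigma$ for the ring automorphism of $\Lambda[\x,\y,\z]$ with $\sigma(\x)=\lambda_0\x$, $\sigma(\y)=\lambda_1\y$, $\sigma(\z)=\lambda_\infty\z$, we have $\sigma(f)=A\lambda_0^a\x^a+B\lambda_1^b\y^b+C\lambda_\infty^c\z^c$. Since the $\Gm^3$-action fixes the vertex $\{\x=\y=\z=0\}$, preserving $F$ implies preserving $\mcU$; conversely, over $R$ the hypersurface $F$ is flat with geometrically reduced fibers of pure dimension two (using $p\nmid abc$ for $p\notin\mathcal{T}$), so the vertex has relative codimension two in $F_R$, $\mcU$ is fiberwise dense in $F$, and $F_R$ is the scheme-theoretic closure of $\mcU_R$ compatibly with base change. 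Hence over $R$ the two stabilizers coincide and preserving $\mcU_\Lambda$ is equivalent to the ideal identity $(\sigma(f))=(f)$ in $\Lambda[\x,\y,\z]$.

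Granting this, the inclusion $\sfH\subseteq\sfS$ holds already over $\Z$: if $\theta\in\sfH(\Lambda)$, so that $\lambda_0^a=\lambda_1^b=\lambda_\infty^c=:u\in\Lambda^\times$, then $\sigma(f)=uf$, so $(\sigma(f))=(f)$ and $\rho_\theta$ preserves $F$, hence $\mcU$. For the reverse inclusion over $R$, suppose $\theta\in\sfS(\Lambda)$, so $\sigma(f)=hf$ for some $h\in\Lambda[\x,\y,\z]$; write $h=h_0+h_+$ with $h_0\in\Lambda$ and $h_+$ of positive degree. In the expansion of $hf$, every monomial coming from $h_+\cdot A\x^a$ has $\x$-degree $>a$ or positive $\y$- or $\z$-degree, and every monomial from $h_0\cdot B\y^b$, $h_0\cdot C\z^c$, $h_+\cdot B\y^b$, or $h_+\cdot C\z^c$ has positive $\y$- or $\z$-degree, so the coefficient of $\x^a$ in $hf$ equals $h_0A$. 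Comparing with $\sigma(f)$ gives $A\lambda_0^a=h_0A$, and since $A\in R^\times\subseteq\Lambda^\times$ we get $\lambda_0^a=h_0$; the symmetric arguments for $\y^b$ and $\z^c$ give $\lambda_1^b=h_0=\lambda_\infty^c$, so $\lambda_0^a=\lambda_1^b=\lambda_\infty^c$ and $\theta\in\sfH(\Lambda)$. As $\Lambda$ was arbitrary, $\sfS_R\subseteq\sfH_R$, and combined with $\sfH\subseteq\sfS$ this gives $\sfS_R=\sfH_R$.

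The arithmetic content sits entirely in the step $A,B,C\in R^\times$: at a prime $p\mid ABC$ that cancellation fails and the stabilizer genuinely grows — e.g.\ if $p\mid A$ then modulo $p$ the polynomial loses its $\x^a$-term and acquires a whole $\Gm$ of extra symmetries scaling $\x$ — which is exactly why only $\sfH\subseteq\sfS$ survives over $\Z$. The one point I expect to require care, rather than the monomial comparison itself, is the equivalence between $\rho_\theta$ preserving $\mcU$ and the ideal identity $(\sigma(f))=(f)$: this rests on flatness of $F$ over $R$ together with the vertex being fiberwise of codimension two, so that $\mcU$ is fiberwise dense in $F$ and the identification of stabilizers is stable under base change. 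Alternatively one can bypass $F$ entirely and work on $\A^3_R\setminus\{0\}$, where $\Gamma(\mcO)=\Lambda[\x,\y,\z]$ still holds after base change, running the same coefficient argument after clearing denominators on the standard affine cover.
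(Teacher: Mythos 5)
Your proof is correct, and its core is the same as the paper's: the inclusion $\sfH\subseteq\sfS$ is immediate because $\sigma(f)=uf$ with $u=\lambda_0^a=\lambda_1^b=\lambda_\infty^c$, and the reverse inclusion over $R$ comes down to comparing the coefficients of $\x^a,\y^b,\z^c$ and cancelling the units $A,B,C$. Where you differ is in the packaging of the reverse inclusion. The paper describes $\sfS$ by the ad hoc functor $B\mapsto\{\theta: F(\lambda_0\x,\lambda_1\y,\lambda_\infty\z)/F(\x,\y,\z)\in B^\times\}$, checks equality with $\sfH$ only on geometric fibers of $\Spec R$, and then invokes fpqc descent and spreading out to promote fiberwise equality to an identity of closed subgroup schemes of $\Gm^3_R$. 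You instead run the coefficient comparison directly on $\Lambda$-points for an arbitrary $R$-algebra $\Lambda$, which eliminates the descent and spreading-out step entirely and is, to my mind, cleaner; the price is that you must justify that a point of $\Gm^3(\Lambda)$ stabilizes $\mcU_\Lambda$ if and only if $\sigma(f)\in(f)$ in $\Lambda[\x,\y,\z]$ for every $\Lambda$, a point the paper silently builds into its definition of $\sfS$. Your two suggested justifications both work; the second (using that $\Gamma(\A^3_\Lambda\setminus\{0\},\OO)=\Lambda[\x,\y,\z]$ compatibly with base change, together with $f$ being a nonzerodivisor since its $\x^a$-coefficient is a unit) is the more robust one, as it sidesteps the question of whether scheme-theoretic closure commutes with base change. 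Your closing remark correctly isolates where the hypothesis $A,B,C\in R^\times$ enters and why only the containment $\sfH\subseteq\sfS$ is claimed over $\Z$.
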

\begin{proof}
  By definition, $\mathsf{S} \colonequals \Stab_{\Gm^3}(\mcU)$ is the group
  scheme that takes any $\Z$-algebra $B$ to the group
  \begin{align*}
    \mathsf{S}(B) &= \brk{(\lambda_0,\lambda_1,\lambda_\infty)\in (B^\times)^3 :
      F(\lambda_0 \x, \lambda_1 \y, \lambda_\infty\z)/F(\x,\y,\z) \in
                    B^\times}, \\
    \intertext{and this group visibly contains}
    \sfH(B) &= \brk{(\lambda_0,\lambda_1,\lambda_\infty)\in (B^\times)^3 : \lambda_0^{a} =
       \lambda_1^{b} = \lambda_\infty^{c}}.
  \end{align*}
  So we have an inclusion $\sfH \hookrightarrow \sfS$. For every geometric
  point $s\colon \Spec k \to \Spec R$, this inclusion pulls back to an equality
  $\sfS_s = \sfH_s$, so we conclude that $\sfS_R = \sfH_R$ by fpqc descent
  \spcite{02L4} and spreading out.
\end{proof}

We start by considering the situation on the geometric fibers.
\begin{lemma}
  \label{lemma:main-geometric-fibers}
  For every geometric point $s\colon \Spec k \to \Spec R$, the map
    \begin{equation}
    \label{eq:j}
    j\colon \mcU_s \to \Pone_s, \quad (x,y,z) \mapsto (-Ax^a:Cz^c)
  \end{equation}
  induces an isomorphism $\bfj_s \colon [\mcU_s/\sfH_s] \cong \Pone\abc_s$.
\end{lemma}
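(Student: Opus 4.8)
The plan is to prove \Cref{lemma:main-geometric-fibers} first, since it isolates the geometric content, and then (as the excerpt's structure suggests) use it together with \Cref{thm:descent}, \Cref{lemma:H1-H}, \Cref{lem:H=Stab} and faithfully-flat descent / spreading out to upgrade from the geometric fibers to the statement over $R$. So I focus on the fiberwise statement. Fix a geometric point $s\colon \Spec k \to \Spec R$; since $k$ is algebraically closed and every bad prime has been inverted, $\sfH_s$ is a smooth diagonalizable group and $a,b,c$ are invertible in $k$. By \Cref{lem:H-structure}\ref{it:H-structure} we have $\sfH_s \cong \Gm \times \sfK_s$ with $\sfK_s$ finite diagonalizable (hence finite \'etale over $k$). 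The idea is to factor the quotient in stages matching this product decomposition: first quotient by the torus $\Gm(\bfw)$ to land in a weighted projective line, then quotient by the remaining finite part $\sfK_s$ to produce exactly the iterated root stack $\Pone\abc_s$.

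\textbf{Step 1: the torus quotient.} First I would analyze $[\mcU_s/\Gm(\bfw)]$, where $\Gm(\bfw)$ acts with weights $\bfw = (w_0,w_1,w_\infty)$. Because $\gcd$ of the $w_i$ is $1$, this is the honest $\Gm$-action of the Poonen--Schaefer--Stoll type, and $[\mcU_s/\Gm(\bfw)]$ is a (stacky) curve whose coarse space is $\Pone_s$; concretely the map $j$ descends to this quotient. The ramification profile is read off from the stabilizers: over $P_0, P_1, P_\infty$ the generic stabilizer in $\Gm(\bfw)$ inside $\mcU_s$ has orders $w_0, w_1, w_\infty$ (this is the elementary computation already foreshadowed in the $\x^4+\y^4-\z^2$ example). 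Combining this with the residual action of $\sfK_s$ will be organized via \Cref{lemma:induced-map-quotient-stacks} applied to $\Gm(\bfw)\hookrightarrow\sfH$ and the resulting map $[\mcU_s/\Gm(\bfw)] \to [\mcU_s/\sfH_s]$, which realizes $[\mcU_s/\sfH_s]$ as the quotient of the first by $\sfK_s = \sfH_s/\Gm(\bfw)$.

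\textbf{Step 2: identifying the root stack.} The cleanest route is to check the universal property of the iterated root stack from \Cref{def:iterated-root-stack}: give a morphism $[\mcU_s/\sfH_s] \to \Pone_s$ (induced by $j$, which is $\sfH$-invariant since $\lambda_0^a = \lambda_1^c$ forces $-A(\lambda_0 x)^a : C(\lambda_\infty z)^c = -Ax^a : Cz^c$) together with, for each of the three marked points, a generalized effective Cartier divisor whose appropriate tensor power is the pullback of $P_i$. These divisors come from the vanishing loci of the coordinates $x$, $y$, $z$ on $\mcU_s$, descended to the quotient; the relations $-Ax^a + By^b + Cz^c = 0$ and $x^a R + z^c R = R$ (primitivity) say exactly that $\{x=0\}, \{y=0\}, \{z=0\}$ pull back $P_0, P_1, P_\infty$ with multiplicities $a,b,c$ after dividing out the $\sfH$-action. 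This produces a canonical morphism $[\mcU_s/\sfH_s] \to \Pone\abc_s$. To see it is an isomorphism one can either invoke that both sides are tame stacky curves over $k$ with the same coarse space $\Pone_s$ and the same stacky structure at $P_0,P_1,P_\infty$ (and are isomorphisms over the complement $U$), or, more self-containedly, compare $k$-points and automorphism groups: \Cref{lemma:R-points-belyi-stack} describes $\Pone\abc_s\ideal{k}$ and $\Pone\abc_s(k)$, while \Cref{thm:descent} plus the triviality/finiteness of $\HH^1(k,\sfH_s)$ (over an algebraically closed field $\HH^1$ of a smooth group scheme is trivial) gives $[\mcU_s/\sfH_s]\ideal{k} = \mcU_s(k)/\sfH_s(k)$, and the stabilizer computation matches the automorphism groups on the nose.

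\textbf{Main obstacle.} The delicate point is Step 2: verifying that the pullback of the Cartier divisor $P_0 = V(\s)$ along the descended $j$ is \emph{exactly} the $a$-th power of the divisor $\{x=0\}$ (and not merely set-theoretically so), i.e., getting the scheme structure and the generalized-Cartier-divisor data right, including the isomorphisms $\sigma_0,\sigma_1,\sigma_\infty$ of line bundles. This is where the hypothesis that $a,b,c$ are invertible on $k$ and the primitivity condition $x^aR + z^cR = R$ are essential — they guarantee the three divisors are disjoint and reduced upstairs and that no ramification hides at the bad primes. I expect the torus quotient in Step 1 and the cohomological bookkeeping to be routine given \Cref{lem:H-structure}, \Cref{thm:descent}, and \Cref{lemma:induced-map-quotient-stacks}; the identification of the stacky structure with the iterated root stack is the crux.
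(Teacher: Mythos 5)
Your Step 2 is, in essence, the paper's proof: one uses the line bundles $\mcL_0 = \x\cdot\OO_\mcU$, $\mcL_1 = \y\cdot\OO_\mcU$, $\mcL_\infty = \z\cdot\OO_\mcU$ with $\mcL_0^{\otimes a} \cong j^*\OO_{\Pone}(-P_0)$, etc., to produce $\bfj\colon [\mcU_s/\sfH_s]\to\Pone\abc_s$ from the universal property of the iterated root stack, and then concludes that $\bfj$ is an isomorphism because both sides are tame stacky curves with coarse space $\Pone_s$ and matching stabilizers $\mu_a,\mu_b,\mu_c$ over $0,1,\infty$, citing \cite[Lemma 5.3.10(a)]{Voight&Zureick-Brown22}. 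One piece you leave implicit is not free, however: the identification of the coarse space of $[\mcU_s/\sfH_s]$ with $\Pone_s$ is proved in the paper by covering $\mcU$ by the affine opens $D(\x)$ and $D(\z)$, invoking Alper's good-moduli-space theorem for the linearly reductive group $\sfH$, and computing the invariant ring $\mcR[1/\z]^{\sfH} = k\bigl[\tfrac{-A\x^a}{C\z^c}\bigr]$. Your proposal asserts this via the intermediate torus quotient without the computation.

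Your Step 1 also contains a concrete error. The generic stabilizer in $\Gm(\bfw)$ of a point of $\mcU_s$ lying over $P_0$ is $\mu_{\gcd(w_1,w_\infty)}$, not $\mu_{w_0}$: for $\abc=(2,3,7)$ one gets $\mu_{\gcd(14,6)}=\mu_2$, not $\mu_{21}$; and for $\abc=(4,4,2)$ the weights are $(1,1,2)$ and all three stabilizers in $\Gm(\bfw)$ are trivial, so the torus quotient alone does not even see the stacky structure --- the finite part $\sfK$ is essential there. Since Step 1 is only organizational and Step 2 works with the full group $\sfH$ (whose stabilizers at $V(\x), V(\y), V(\z)$ are indeed $\mu_a,\mu_b,\mu_c$), this does not sink the argument, but the two-stage decomposition as you describe it would not reproduce the correct ramification profile and is best replaced by the direct invariant-ring computation, as in the paper.
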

\begin{proof}
  We omit the subscript ``$s$\,'' and work over $k$ throughout. We start by
  showing that $j$ induces a coarse map $j\colon [\mcU/\sfH] \to \Pone$. Recall that
  $\mcR = k[\x,\y,\z]/\ideal{\gfe}$ is the coordinate ring of $F$. Consider the
  affine open $D(\z) \subset F$, with corresponding coordinate ring
  $\mcR[1/\z]$. Note that $\mcU\cap D(\z) = D(\z)$. Since
  $D(\z) = \Spec \mcR[1/\z]$ is affine, $\sfH$ is linearly reductive, and
  $[D(\z)/\sfH]$ is tame, the natural map
  $[\Spec \mcR[1/\z]/\sfH] \to \Spec \mcR[1/\z]^H$ is a good moduli space and
  thus a coarse moduli space (see \cite[Theorem 13.2 and Remark
  7.3]{Alper13}). 
  Now, we calculate that $\mcR[1/\z]^H = k\left[\tfrac{-A\x^a}{C\z^c}\right]$.
  Applying the same argument to $D(\x)$, the result follows by glueing the maps
  \begin{align*}
    [\mcU\cap D(\x) / \sfH] &\to \Spec k\left[\tfrac{-C\z^c}{A\x^a}\right], \quad
    [\mcU\cap D(\z) / \sfH] &\to \Spec k\left[\tfrac{-A\x^a}{C\z^c}\right] 
  \end{align*}
  to obtain the coarse map $j\colon [\mcU/\sfH] \to \Pone$.

  We proceed to show that $[\mcU/\sfH] \cong \Pone\abc$. By definition of
  $\Pone\abc$ as an iterated root stack, the map $j\colon \mcU \to \Pone$ induces a
  map $\bfj \colon [\mcU/\sfH] \to \Pone\abc$. Indeed, the map $j\colon \mcU \to \Pone$ satisfies
  \[j^*\OO_{\Pone}(-P_0) = \mcL_0^a, \quad j^*\OO_{\Pone}(-P_1) = \mcL_1^b,
    j^*\OO_{\Pone}(-P_\infty) = \mcL_\infty^c,\] with
  $\mcL_0 = \x\cdot\OO_\mcU, \mcL_1 = \y\cdot\OO_\mcU$ and
  $\mcL_\infty = \z\cdot\OO_\mcU$, and this gives rise an object in
  $\Pone\abc(\mcU)$.

  Since $[\mcU/\sfH]\ideal{k} = \Pone(k) = \Pone\abc\ideal{k}$, and the map
  $[\mcU/\sfH](k) \to \Pone\abc(k)$ induces isomorphisms between the stabilizer
  groups of the stacky points
  \begin{align*}
    \Stab_\sfH(V(\x)) & \cong \mu_a(k), \\
    \Stab_\sfH(V(\y)) & \cong \mu_b(k), \\
    \Stab_\sfH(V(\z)) & \cong \mu_c(k).
  \end{align*}
  The result follows from \cite[Lemma 5.3.10(a)]{Voight&Zureick-Brown22}.
\end{proof}

\begin{proof}[Proof of \Cref{thm:re-main}]
  The $R$-morphism $j$ is surjective (this can be checked on geometric fibers
  by fpqc descent \spcite{02KV} and spreading out) and $\sfH_R$-invariant. From
  \Cref{lemma:induced-map-quotient-stacks}, this induces a morphism
  $[\mcU/\sfH_R] \to \Pone_R$, which factors through the coarse map
  $\Pone\abc_R \to \Pone_R$ by the definition of the \belyi stack. Both
  $\Pone\abc_R$ and $[\mcU/\sfH_R]$ are tame relative stacky curves. To
  calculate the coarse space of $\mcU/\sfH$ of $[\mcU/\sfH]$, we use the same
  argument as in the proof of \Cref{lemma:main-geometric-fibers}.

  \begin{equation*}
    \begin{tikzcd}
      & \mcU \arrow[dd, near end, "j"] \arrow[ld] \arrow[rd] & \\
      {[\mcU/\sfH_R]} \arrow[rd, "\text{coarse}"'] \arrow[rr, gray, dashed,
      near end, "\bfj"] & & \Pone\abc_R \arrow[ld, "\text{coarse}"] \\
      & \Pone_R &
    \end{tikzcd}
  \end{equation*}

  In summary, we have a morphism $\bfj \colon [\mcU/\sfH]_R \to \Pone\abc_R$
  with the property that on each geometric fiber, the induced map on the coarse
  spaces $(\mcU/\sfH)_s \to \Pone_s$ is an isomorphism inducing a
  stabilizer-preserving bijection between $[\mcU/\sfH]\ideal{\kbar}$ and
  $\Pone\abc\ideal{\kbar}$. \cite[Lemma 5.3.10 (a)]{Voight&Zureick-Brown22}
  implies that $(\widetilde j)_s$ is an isomorphism for every geometric point
  of $\Spec R$, and this implies that the same is true globally. Alternatively,
  we can apply Santens' characterization of tame relative stacky curves
  \cite[Lemma 2.1]{Santens2023}.
\end{proof}

\section{The method of Fermat descent}
\label{sec:fermat-descent}

What follows is a brief discussion of the method of Fermat descent. This is not
intended to be a comprehensive algorithm for finding the primitive integral
solutions of an arbitrary generalized Fermat equation with integer
coefficients. Rather, it is meant as an artisanal guide to the method, from the
point of view developed in this article.

\begin{situation}
  We fix the following setup for the remainder of this section.
  \begin{itemize}[leftmargin=*]
  \item Let $\abc$ be a triple of positive integers, with $a,b,c > 1$.
  \item Let $F\colon \gfe = 0 \subset \A^3_\Z$ be a generalized Fermat
    equation.
  \item Let $\mcS$ be the set of primes $p$ dividing $a\cdot b\cdot c \cdot A
    \cdot B \cdot C$, and $R \colonequals \Z[\mcS^{-1}]$.
  \item Let $\mcU$ denote the punctured cone associated to $F$.
  \item Let $\sfH$ be as in \Cref{def:H}.
  \item Let $j\colon \mcU \to \Pone$ be the morphism $(x,y,z)\mapsto (-Ax^a:Cz^c)$.
  \end{itemize}
\end{situation}

As a consequence of \Cref{thm:re-main}, we have that $j(\mcU(\Z))$ is contained
in the set $\Pone\abc\ideal{R} \subset \Pone(R) = \Pone(\Q)$. This is our
starting point. The method of Fermat descent consists of three steps: covering,
twisting, and sieving.

\subsection{Covering}
\label{sec:covering}
The goal is to find a geometrically Galois \belyi map
(\Cref{def:galois-belyi-map}) $\phi\colon Z_K \to \Pone_K$. We know that one exists from
\Cref{prop:belyi-Galois-cover}, but we might have to base extend to a number
field $K$ to find it. Furthermore, the map $\phi$ admits an integral model
$\Phi$ over $\Spec \OO_{K}[\mathcal{T}^{-1}]$ for some finite set of primes
$\mathcal{T}$ which we can arrange to contain the primes above $\mcS$. In
practice, it is desirable to minimize the complexity of this data as much as
possible.

\subsection{Twisting}
\label{sec:twisting}
Now that we found a covering $\Phi\colon Z \to \Pone_{\OO_K[\mathcal{T}^{-1}]}$
with automorphism group scheme $\Autsch(\Phi)$, we are tasked with finding:
\begin{enumerate}
\item \label{it:H1} The (\v{C}ech fppf) cohomology set $\HH^1(\OO_K[\mathcal{T}^{-1}],
  \Autsch(\Phi))$. 
\item \label{it:twists} For each $\tau \in \HH^1(\OO_K[\mathcal{T}^{-1}], \Autsch(\Phi))$, the twist
  $\Phi_\tau\colon Z_\tau \to \Pone$
\end{enumerate}

For (\ref{it:H1}), the first observation is that there is an isomorphism of pointed
sets between $\HH^1(\OO_K[\mathcal{T}^{-1}], \Autsch(\Phi))$ is in bijective
correspondence with the Galois cohomology set
$\HH^1_\mathcal{T}(K, \Gal(\phi))$, parametrizing isomorphism classes of Galois
{\'e}tale $K$-algebras with Galois group $\Gal(\phi)$
$= \Autsch(\phi)(\bar K) = \Aut(\phi_{\bar K}) $, and unramified outside
$\mathcal{T}$. The second observation is that we might not need the full
cohomology set. More precisely, for our purposes of finding $\mcU(\Z)$, we want
to identify the subset of those $\tau$ for which $\Phi_\tau(Z_\tau(K)) \cap j(\mcU(\Z)) \neq \emptyset$.

\subsection{Sieving}
\label{sec:sieving}
Let $R' = \OO_K[\mathcal{T}^{-1}]$. If we reach this step, we have found a subset $T \subset
\HH^{1}(R', \Autsch(\Phi))$ such that
\begin{equation*}
  j(\mcU(\Z)) \subset \bigsqcup_{\tau \in T}\Phi_\tau(Z_\tau(R')) = \bigsqcup_{\tau \in T}\phi_\tau(Z_\tau(K)).
\end{equation*}
We use the word ``sieve'' to mean that our goal is separate the points on the
right hand side that do not come from primitive integral solutions to $F$. For
this purpose, it might be helpful to recall that $j(\mcU(\Z))$ is also
contained in $\Pone\abc\ideal{R}$, where $R = \Z[\mcS]$ is a principal ideal
domain, to apply \Cref{lemma:R-points-belyi-stack}.

\bibliographystyle{amsalpha}
\providecommand{\bysame}{\leavevmode\hbox to3em{\hrulefill}\thinspace}
\providecommand{\MR}{\relax\ifhmode\unskip\space\fi MR }
\providecommand{\MRhref}[2]{%
  \href{http://www.ams.org/mathscinet-getitem?mr=#1}{#2}
}
\providecommand{\href}[2]{#2}

\end{document}